\documentclass[11pt]{article}

\textwidth 16.5cm
\textheight 23cm
\oddsidemargin 0cm
\evensidemargin -0.5cm
\topmargin -1.5cm

\usepackage{enumerate}
\usepackage{mathrsfs}
\usepackage{amsmath, color}
\usepackage{latexsym}
\usepackage{amssymb}
\usepackage{amsthm}
\usepackage{amsfonts}
\usepackage{dsfont}
\usepackage{hyperref}
\usepackage{inputenc}

\newtheorem{dfn}{Definition}[section]
\newtheorem{thm}[dfn]{Theorem}
\newtheorem{rmk}[dfn]{Remark}

\newtheorem{prop}[dfn]{Proposition}

\newtheorem{lem}[dfn]{Lemma}
\theoremstyle{definition}

\numberwithin{equation}{section}
%\addtocounter{section}{-1}
\renewcommand\theenumi\arabicenumi \renewcommand\labelenumi\rm

\newcommand{\IE}{{\mathbb{E}}}
\newcommand{\IP}{{\mathbb{P}}}
\newcommand{\IR}{{\mathbb{R}}}
\newcommand{\FF}{{\mathcal{F}}}
\newcommand{\EE}{{\mathcal{E}}}
\newcommand{\IQ}{{\mathbb{Q}}}

\def\eps{\varepsilon}
\def\wh{\widehat}
\def\wt{\widetilde}
\def\<{\langle}
\def\>{\rangle}

\title{\bf Brownian Motion with Drift on Spaces with Varying Dimension}
\date{\today}

\author{Shuwen Lou}

\begin{document}

\maketitle
\begin{abstract}
\noindent Many properties of Brownian motion on spaces with varying dimension (BMVD in abbreviation) have been explored in \cite{CL}.  In this paper, we study Brownian motion with drift on spaces with varying dimension (BMVD with drift in abbreviation).  Such a process can be conveniently defined by a regular Dirichlet form that is not necessarily symmetric. Through the method of Duhamel's principle, it is established in this paper that  the transition density of BMVD with drift has the same type of  two-sided Gaussian  bounds as that for BMVD (without drift). As a corollary,  we derive Green function estimate for BMVD with drift.
\end{abstract}

\medskip
\noindent
{\bf AMS 2010 Mathematics Subject Classification}: Primary 60J60, 60J35; Secondary 31C25, 60H30, 60J45

\smallskip\noindent
{\bf Keywords and phrases}: Space of varying dimension, Brownian motion,  Laplacian,  singular drift,  transition density function, 
heat kernel estimates,  Green function 

\section{Introduction}\label{S:1}
Brownian motion on spaces with varying dimension has been introduced and studied in details with an emphasis on its transition density estimate in \cite{CL}.
Such a process is an interesting example of Brownian motion on non-smooth spaces  and can be characterized nicely via Dirichlet form. The state space of BMVD, with or without drift, looks like a plane with a vertical half line installed on it. Roughly speaking, it is ``embedded" in the following space:
\begin{equation*}
\IR^2 \cup \IR_+ =\{(x_1, x_2, x_3)\in \IR^3: x_3=0 \textrm{ or } x_1=x_2=0 \hbox{ and } x_3>0\}.
\end{equation*}
As has been pointed out in \cite{CL}, Brownian motion cannot be defined on such a state space in the usual sense because a two-dimensional Brownian motion does not hit a singleton. Therefore in order to define the desired process,  in \cite{CL}  we ``short" a closed disc on $\IR^2$ to a singleton. In other words, we let the media offer infinite conductance on this closed disc, so that  the process travels across it at infinite velocity. The resulting Brownian motion hits the shorted disc in finite time with probability one. Then we install an infinite pole at this ``shorted" disc.  

To be more precise, the state space of BMVD on $E$ is defined as follows.
Fix $\eps>0$ and $p>0$.
Denote by  $B_\eps $  the closed disk on $\IR^2$ centered at $(0,0)$ with radius $\eps $. Let 
${D_\eps}:=\IR^2\setminus  B_\eps $. By identifying $B_\eps $ with a singleton denoted by $a^*$, we can introduce a topological space $E:={D_\eps}\cup \{a^*\}\cup \IR_+$, with the origin of $\IR_+$ identified with $a^*$ and
a neighborhood of $a^*$ defined as $\{a^*\}\cup \left(V_1\cap \IR_+ \right)\cup \left(V_2\cap {D_\eps}\right)$ for some neighborhood $V_1$ of $0$ in $\IR^1$ and $V_2$ of $B_\eps $ in $\IR^2$. Let $m_p$ be the measure on $E$ whose restriction on $\IR_+$ and ${D_\eps}$ is the Lebesgue measure multiplied by $p$ and $1$, respectively.
In particular, we have $m_p (\{a^*\} )=0$. Note that the measure $m_p$ also depends on $\eps$, the radius of the ``hole" $B_\eps$. 

In particular, we set $m_p\left(\{a^*\}\right)=0$.
The following definition for BMVD can be found in \cite{CL}.
\begin{dfn}[Brownian motion with varying dimension]\label{def-bmvd}An $m_p$-symmetric diffusion process satisfying the following properties is called Brownian motion with varying dimension.

\begin{description}
\item{\rm (i)} its part process in $\IR_+$ or $D_\eps$ has the same law as standard Brownian motion in $\IR_+$ or $D_\eps$;
\item{\rm (ii)} it admits no killings on $a^*$;
\end{description}
\end{dfn}
We denote BMVD (without drift) by $X^0$. It follows from the definition that the process spends zero amount of time under Lebesgue measure (i.e. zero sojourn time) at $a^*$. The following theorem is a restatement of   \cite[Theorem 2.2]{CL}, which  asserts that given  $\eps>0, p>0$,  BMVD exists and is unique in law. It also gives the Dirichlet form characterization for $X^0$.

\begin{thm}\label{BMVD-non-drift}
For every $\eps >0$ and $p>0$,
BMVD $X^0$ on $E$ with parameter $(\eps, p)$ exists and is unique.
Its associated Dirichlet form $(\EE^0, \mathcal{D}(\EE^0))$ on $L^2(E; m_p)$ is given by
\begin{eqnarray*}
\mathcal{D}(\EE^0) &= &  \left\{f: f|_{D_\eps}\in W^{1,2}(D_\eps),  \, f|_{\IR_+}\in W^{1,2}(\IR_+),
\hbox{ and }
f (x) =f (0) \hbox{ q.e. on } {\partial D_\eps}\right\},  
\\
\EE^0(f,g) &=& \frac12 \int_{D_\eps}\nabla f(x) \cdot \nabla g(x) dx+\frac{p}2\int_{\IR_+}f'(x)g'(x)dx . 
\end{eqnarray*}
\end{thm}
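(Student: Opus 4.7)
The plan is to prove the theorem via Dirichlet form theory. First I would verify that $(\EE^0, \mathcal{D}(\EE^0))$ is a symmetric, densely defined, closed, Markovian and regular Dirichlet form on $L^2(E; m_p)$. Density in $L^2(E;m_p)$ is immediate since $C_c^\infty(D_\eps) \oplus C_c^\infty(\IR_+)$ sits inside $\mathcal{D}(\EE^0)$ and $m_p(\{a^*\}) = 0$. Closedness follows because any $\EE_1^0$-Cauchy sequence $\{f_n\}$ restricts to Cauchy sequences in $W^{1,2}(D_\eps)$ and $W^{1,2}(\IR_+)$, whose limits can be glued consistently: since $\partial D_\eps$ is a smooth compact curve, the trace operator on $W^{1,2}(D_\eps)$ is continuous, so the condition $f_n(x) = f_n(0)$ q.e.\ on $\partial D_\eps$ passes to the limit. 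Markovianity with respect to normal contractions is standard for gradient-type forms. For regularity, I would build a core of continuous compactly supported functions on $E$ by combining smooth compactly supported functions on each of $D_\eps$ and $\IR_+$ with ``bridge'' functions that are constant near $\partial D_\eps$ and take that same constant value in a neighborhood of the origin in $\IR_+$; this family is dense in $\mathcal{D}(\EE^0)$ in the $\EE_1^0$-norm and in $C_c(E)$ in the sup-norm.

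Once regularity is in hand, Fukushima's representation theorem produces an $m_p$-symmetric Hunt process $X^0$ on $E$, unique up to an $m_p$-polar set of starting points. To verify property (i), I would compute the part Dirichlet form on $D_\eps$, namely $\bigl(\tfrac12\int_{D_\eps}\nabla f \cdot \nabla g\,dx,\ W_0^{1,2}(D_\eps)\bigr)$, and identify it as absorbing Brownian motion on $D_\eps$; similarly the part form on $\IR_+$ with respect to the weighted measure $p\,dx$ is $\tfrac{p}{2}\int(f')^2\,dx$ on $W_0^{1,2}(\IR_+)$, whose associated process is absorbed Brownian motion on $\IR_+$ (the factor $p$ in the form cancels against the factor $p$ in the reference measure). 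Property (ii) follows from the absence of any killing term in $\EE^0$ combined with strong locality, which simultaneously forces $X^0$ to be a diffusion.

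For uniqueness, if $Y$ is another $m_p$-symmetric diffusion satisfying (i) and (ii), then its infinitesimal generator must agree with $\tfrac12\Delta$ on smooth compactly supported test functions on $D_\eps$ and on $\IR_+$, pinning down its Dirichlet form on the above core to coincide with $\EE^0$. The absence of killing at $a^*$ together with path-continuity at the gluing point then forces its full domain to equal $\mathcal{D}(\EE^0)$. The main obstacle I anticipate is the handling of $a^*$: one must show that $\{a^*\}$ is non-polar for $X^0$ (so the shorting identification is actually felt and the trace condition is nontrivial), and justify that the ``q.e.\ on $\partial D_\eps$'' condition under the identification of $B_\eps$ with the single point $a^*$ is the correct and only gluing between $W^{1,2}(D_\eps)$ and $W^{1,2}(\IR_+)$ consistent with (i)--(ii). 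This is where the essential content of the theorem lies, since a priori different boundary behaviors at the circle $\partial D_\eps$ could give rise to distinct extensions.
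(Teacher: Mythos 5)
This theorem is not proved in the paper at all: it is quoted verbatim from \cite[Theorem 2.2]{CL}, where existence, uniqueness and the Dirichlet form identification are obtained from the theory of symmetric one-point (darning) extensions as in \cite[Chapter 7]{CF}; the present paper itself leans on that theory in Section \ref{S:2}, where it recalls that $\mathcal{D}(\EE^0)$ is the linear span of $\FF^0\cup\{u_0\}$ with $u_0(x)=\IE_x[e^{-\sigma_{\{a^*\}}}]$-type functions. Your existence argument --- verify that $(\EE^0,\mathcal{D}(\EE^0))$ is a regular Dirichlet form, invoke Fukushima's representation theorem, compute the part forms on $D_\eps$ and $\IR_+$ to get property (i), and get property (ii) from strong locality and the absence of a killing term --- is sound in outline and is essentially the same construction as in \cite{CL}; one item you flag but should actually verify (it is needed for the pieces to communicate and for (i) to have content) is that $\{a^*\}$ is non-polar, which holds because after shorting its capacity is that of the circle $\partial D_\eps$, which is positive.

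The genuine gap is in uniqueness. Property (i) only tells you that the Dirichlet form of a competitor $Y$ agrees with $\EE^0$ on functions vanishing q.e.\ at $a^*$, i.e.\ on $\FF^0$. Your assertion that ``the absence of killing at $a^*$ together with path-continuity at the gluing point then forces its full domain to equal $\mathcal{D}(\EE^0)$'' is exactly the statement that needs proof, and your core argument is circular at this point: to know that test functions plus your bridge functions are $\EE^Y_1$-dense in $\mathcal{D}(\EE^Y)$ you would already need to know how $Y$ behaves at $a^*$. What closes this step in \cite{CL} is the uniqueness theorem for symmetric one-point extensions (\cite[Theorem 7.5.4]{CF}): since $m_p(\{a^*\})=0$ together with $m_p$-symmetry forces zero sojourn at $a^*$ and (ii) forbids killing there, the (extended) Dirichlet space of any such $Y$ must equal $\FF^0$ enlarged by exactly one dimension, spanned by a function of the type $u_0$, and the form evaluated on that extra function is determined by the part processes and the fixed symmetrizing measure $m_p$ --- this is where the weights $1$ on $D_\eps$ and $p$ on $\IR_+$ pin down the connection at $a^*$ and exclude any other symmetric gluing. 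Without invoking or reproving this one-point extension result, your uniqueness paragraph remains an assertion about precisely the ``essential content'' you yourself identify as the main obstacle.
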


In this paper, we study Brownian motion with singular drift on spaces with varying dimension which is a natural generalization  of BMVD. We seek to establish the stability of the heat kernel asymptotic behavior under gradient perturbation.  To give a brief overview on  the classic  results for   Brownian motion with drift on Euclidean spaces, we restate the following definition for Kato class functions (see, for instance, \cite{ChenZ}):

\begin{dfn}[Kato class functions]\label{def-kato-class}
On $\IR^n$, for $d\in \mathbb{N}$, we say a function $h: \IR^n\rightarrow \IR$ is in Kato class $\mathbf{K}_{d}$ if
\begin{equation*}
\lim_{r\downarrow 0}\sup_{x\in \IR^n} \int_{|x-y|<r} \frac{\left|h(y)\right|}{|x-y|^{d-2}}dy=0, \quad \text{for }d\ge 3,
\end{equation*}
\begin{equation*}
\lim_{r\downarrow 0}\sup_{x\in \IR^n}\int_{|x-y|<r}\log\left(|x-y|^{-1}\right)\left|h(y)\right|dy=0, \quad \text{for }d=2,
\end{equation*}
and
\begin{equation*}
\sup_{x\in \IR^n}\int_{|x-y|\le 1}\left|h(y)\right|dy<\infty, \quad \text{for }d=1.
\end{equation*}
\end{dfn}
\begin{rmk}
We point out that in Definition \ref{def-kato-class}, it is \emph{not}  necessary that $n=d$. Also, the definition of $\mathbf{K}_d$ depends on $n$, the dimension of the domain of   the function. 
\end{rmk}
On $\IR^d$, Brownian motion with drift can be characterized by its associated generator $\mathcal{L}^b=\frac{1}{2}\Delta+b\cdot \nabla$ where $b\in \{h:\IR^d\rightarrow \IR^d, \, |h|\in \mathbf{K}_{d+1} \; \text{or } |h|^2\in \mathbf{K}_{d}\}$. It was established by Chen and Zhao in \cite{ChenZ} that when $|b|^2\in \mathbf{K}_{d}$, the bilinear form associated with $\mathcal{L}^b=\frac{1}{2}\Delta+b\cdot \nabla$ on $C^\infty_c(\IR^d)$ is lower semibounded, closable, Markovian and satisfies Silverstein's sector condition. Therefore there is a minimal diffusion process associated with this bilinear form.  Bass and Chen claimed in \cite{BC} that when $|b|\in \mathbf{K}_{d+1}$, there is a unique weak solution to the SDE:
\begin{equation*}
dY_t=dB_t+b(Y_t)dt, \qquad Y_0=y_0.
\end{equation*}
Furthermore, such a solution is a strong Markov process associated to the  generator $\mathcal{L}^b=\frac{1}{2}\Delta+b\cdot\nabla$. Indeed, on $\IR^d$, $L^{p}\subset \left(\mathbf{K}_{d+1}\cap\left\{b: |b|^2\in \mathbf{K}_{d}\right\}\right)$ for all $p>d$.

The sharp two-sided bounds for the transition densities of  Brownian motion with drift on Euclidean spaces have been extensively studied  over   the past few decades. It was first  shown by Aronson that the transition density $p(t,x,y)$ of Brownian motion with drift  on $\IR^d$ has the following two-sided Gaussian-type bounds, provided that $b\in L^{p}(B(0, R))$ for some $p>d$ and $R>0$, and $b$ is bounded outside $B(0, R)$:
\begin{equation*}
\frac{C_1}{t^{d/2}}\exp\left(-\frac{C_2|x-y|^2}{t}\right) \le p(t,x,y) \le \frac{C_3}{t^{d/2}}\exp\left(-\frac{C_4|x-y|^2}{t}\right), \quad 0<t\le 1.
\end{equation*}
Later it was developed  by Zhang  in \cite{Z1} that Aronson-type heat kernel two-sided bounds hold provided that $b$ satisfies some integral conditions. It was also claimed in \cite{Z1} that Kato class $\mathbf{K}_{d+1}$ implies the same integral conditions. In \cite[Proposition 2.3]{KS}, it was justified  that these integral conditions are indeed equivalent to Kato class $\mathbf{K}_{d+1}$. See also Riahi \cite{Riahi}.

To give definition to BMVD with drift, we first give definition to the class of functions $L^{p_1, p_2}(E)$. 
\begin{dfn}
Given $p_1\in (1, \infty]$ and $p_2\in (2,\infty]$, we  say a measurable function $b: E\rightarrow \IR$ is in $ L^{p_1, p_2}(E)$  if  
\begin{description}
\item{\rm (i)} $b|_{\IR_+}\in L^{p_1}(\IR_+)$;
\item{\rm (ii)} $b|_{D_\eps\cup \{a^*\}}\in L^{p_2}(D_\eps\cup \{a^*\})$.
 \end{description}
\end{dfn}
Given that $b$ takes value in $\IR$, for notation convenience, in this paper we define
\begin{equation*}
b\cdot \nabla f:= b(x)\frac{\partial f}{\partial x_1}(x)+b(x)\frac{\partial f}{\partial x_2}(x), \quad \text{for }x\in D_\eps, \, x=(x_1, x_2). 
\end{equation*}
For every $b\in L^{p_1, p_2}(E)$, we set $b_1:=b|_{\IR_+}$ and $b_2:=b|_{D_\eps\cup \{a^*\}}$.  It follows that for every pair of positive constants $(\eps, p)$ and every $b\in L^{p_1, p_2}(E)$, 
\begin{equation}\label{Dirichlet-drifted-BMVD}
\EE^b\left(f,\;g\right)=\EE^0\left(f,\;g\right)-\left(b\cdot \nabla f,\; g\right),\quad \text{for }f, g\in  \mathcal{D}(\EE^b)=\mathcal{D}(\EE^0),
\end{equation}
is a strongly local regular Dirichlet space (which in general is non-symmetric), therefore there is a unique diffusion process associated with it.  
\begin{dfn}[Brownian motion with drift  on space  with varying dimension]\label{Def-driftedBMVD}  Let $\eps >0$ and $p>0$ be fixed, and let $b\in L^{p_1, p_2}(E)$ with $p_1\in (1, +\infty]$ and $p_2\in (2, +\infty]$.   The diffusion process   associated with $(\EE^b, \mathcal{D}(\EE^b))$  is called   Brownian motion with drift  on space   with varying dimension   and  is denoted by $X$.
\end{dfn}
In this paper, we restrict ourselves to  the cases where the   drift function $b$  is  scalar-valued, this is   because the proofs rely substantially on H\"{o}lder's inequality involving $b$. 

Unlike  $X^0$, BMVD  with drift  $X$ in general is non-symmetric.  
The major goal of this paper is to establish  sharp two-sided bounds for the transition density function of $X$.  Given that in general $X$ is not rotationally-invariant on $D_\eps$,  we can not employ the method of radial process which has been used in \cite{CL}  to study $X^0$. Instead, in this paper, via  Duhamel's principle,  we identify both  the transition density and the resolvent operators of $X$ as the transformations of those for   $X^0$. Then we derive the upper bound estimates for the  heat kernel  of $X$. The lower bound estimate, on the other hand, is derived via the combination of short-time  near-diagonal lower bound estimates and a chain argument. 

To state the results of this paper, we need to introduce a few more notations.
In this paper, we denote the geodesic metric on the underlying space $E$ by $\rho$.
Namely, for $ x,y\in E$, $\rho (x, y)$ is the shortest path distance (induced
from the Euclidean space) in $E$ between $x$ and $y$.
For notation simplicity, we write $|x|_\rho$ for $\rho (x, a^*)$ when $x\in D_\eps$.
We use $| \cdot |$ to denote the usual Euclidean norm. For example, for $x,y\in D_\eps$,
$|x-y| $ is  the Euclidean distance between $x$ and $y$ in $\IR^2$.
Note that for $x\in D_\eps$, $|x|_\rho =|x|-\eps$.
 Apparently,
\begin{equation}\label{e:1.1}
\rho (x, y)=|x-y|\wedge \left( |x|_\rho + |y|_\rho \right)
\quad \hbox{for } x,y\in D_\eps,
\end{equation}
and $\rho (x, y)= |x|+|y|-\eps$ when $x\in \IR_+$ and $y\in D_\eps$ or vice versa.
Here and in the rest of this paper, for $a,b\in \IR$, $a\wedge b:=\min\{a,b\}$.

For any compact set $K\subset E$, we define $\sigma_K:=\inf\{t>0, X_t\in K\}$. For any open domain $D\subset E$, we define $\tau_{D}:=\inf\{t>0:  X_t \notin D\}$.   Similar notations will be used for other stochastic processes. Also we set $\delta_D(x):=\inf\{d(x,y): \, y\in D^c\}$, where $d(\cdot, \cdot)$ stands for either Euclidean or geodesic distance according to the context.

For two positive functions $f$ and $g$, $f\asymp g$ means that $f/g$ is bounded between two positive constants.
In the following, we will also use  notation $f\lesssim g$ (respectively, $f\gtrsim g$) to mean that there is some constant $c>0$ so that $f\leq c g$
(respectively, $f\geq c g$).

The following theorem is a   restatement of   \cite[Theorem 1.3]{CL}, which establishes  the  two-sided short-time heat kernel bounds for BMVD without drift $X^0$.  By $p(t,x,y)$ we   denote the transition density of $X^0$.
\begin{thm}\label{smalltime}
Let $T>0$ be fixed. There exist positive constants $C_i$, $1\le i\le 14$ so  that the 
   transition density $p(t,x,y)$ of BMVD $X^0$  satisfies the following estimates when $t\in (0, T]$:
\begin{description}
\item{\rm (i)} For $x \in \IR_+$ and $y\in E$, 
\begin{equation*}
\frac{C_1}{\sqrt{t}}e^{-C_2\rho (x, y)^2/t }  \le p(t,x,y)\le\frac{C_3}{\sqrt{t}}e^{- C_4\rho (x, y)^2/t}.
\end{equation*}

\item{\rm (ii)} For $x,y\in {D_\eps}\cup \{a^*\}$, when $|x|_\rho+|y|_\rho<1$,
\begin{eqnarray*}
&& \frac{C_{5}}{\sqrt{t}}e^{- C_{6}\rho(x,y)^2/t}+\frac{C_{5}}{t}\left(1\wedge
\frac{|x|_\rho}{\sqrt{t}}\right)\left(1\wedge
\frac{|y|_\rho}{\sqrt{t}}\right)e^{- C_{7}|x-y|^2/t} \nonumber
\\
 &\le  & p(t,x,y)
\le \frac{C_{8}}{\sqrt{t}}e^{-C_{9}\rho(x,y)^2/t}
+\frac{C_{8}}{t}\left(1\wedge
\frac{|x|_\rho}{\sqrt{t}}\right)\left(1\wedge
\frac{|y|_\rho}{\sqrt{t}}\right)e^{-C_{10}|x-y|^2/t}; \nonumber
\end{eqnarray*}
and when $|x|_\rho+|y|_\rho\geq 1$,
\begin{equation*}
\frac{C_{11}}{t}e^{- C_{12}\rho(x,y)^2/t} \le p(t,x,y) \le \frac{C_{13}}{t}e^{- C_{14}\rho(x,y)^2/t}.
\end{equation*}
 \end{description}
\end{thm}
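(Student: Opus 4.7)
The plan is to decompose the transition density $p(t,x,y)$ via the strong Markov property at the first visit $\sigma_{a^*}$ to the pole:
\begin{equation*}
p(t,x,y) = p^{D}(t,x,y) + \IE_x\!\left[p(t-\sigma_{a^*}, a^*, y); \sigma_{a^*} \le t\right],
\end{equation*}
where $p^{D}$ is the transition density of the part process $X^{0,D}$ killed upon hitting $a^*$. The crucial structural observation is that $X^{0,D}$ decouples cleanly: on $D_\eps$ it agrees with reflected Brownian motion in $D_\eps$ absorbed at $\partial B_\eps$, and on $\IR_+$ it agrees with Brownian motion on $(0,\infty)$ absorbed at the origin. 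In particular, $p^{D}(t,x,y) = 0$ whenever $x,y$ lie on different sides of $a^*$, which already accounts for the absence of a pure 2D Gaussian term in case (i).

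For the upper bound, the $p^{D}$ piece is controlled by classical short-time Dirichlet heat kernel estimates; on $D_\eps$ these supply exactly
\begin{equation*}
p^{D}(t,x,y) \lesssim \frac{1}{t}\left(1 \wedge \frac{|x|_\rho}{\sqrt{t}}\right)\left(1 \wedge \frac{|y|_\rho}{\sqrt{t}}\right) e^{-c|x-y|^2/t}.
\end{equation*}
The convolution piece is estimated using the rotational symmetry of $X^0$ on $D_\eps$: the radial process $|X^0_\cdot|_\rho$, glued with $X^0$ on $\IR_+$, is a one-dimensional diffusion through $a^*$, so both $\IP_x(\sigma_{a^*} \in ds)$ and $p(t-s, a^*, y)$ reduce to explicit one-dimensional calculations. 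Integrating in $s$ and applying standard Gaussian tail bounds yields the $\frac{1}{\sqrt{t}} e^{-c\rho(x,y)^2/t}$ summand. When $|x|_\rho + |y|_\rho \ge 1$ the boundary factors saturate to $1$ and the pole is no longer geometrically close, so only the 2D Gaussian survives.

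For the lower bound, I would combine a near-diagonal lower bound (from the parabolic Harnack inequality for $\EE^0$, which is available since $(E, m_p)$ has volume doubling and satisfies the Poincar\'e inequality on metric balls, both verifiable directly from the Dirichlet form) with a chaining argument along a geodesic between $x$ and $y$. When $\rho(x,y) = |x|_\rho + |y|_\rho < |x-y|$ the geodesic passes through $a^*$; chaining the one-dimensional radial diffusion through the pole recovers $\frac{1}{\sqrt{t}} e^{-c\rho(x,y)^2/t}$. For $x,y \in D_\eps$ close to $a^*$, a separate chain along the Euclidean segment avoiding $B_\eps$ supplies the second lower-bound summand, with the boundary factors produced by the probability that the first and last few links of the chain do not cross $\partial B_\eps$.

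The main obstacle is quantitative control at $a^*$, which is neither an interior point of a smooth manifold nor a smooth boundary point. The short-time hitting density $\IP_x(\sigma_{a^*} \in ds)$ must be estimated with the correct scaling of $(1 \wedge |x|_\rho/\sqrt{s})$, and this estimate has to be consistent with the $p$-weighting of $m_p$ across $a^*$. Rotational symmetry of $X^0$ on $D_\eps$ reduces the question to a one-dimensional problem, but matching the prefactors sharply from both sides of the pole, and transferring a Dirichlet heat kernel estimate on $D_\eps$ (where $\partial B_\eps$ is a \emph{curve}) to $X^0$ (where $\partial B_\eps$ has been collapsed to a \emph{point}), is the delicate technical core of the argument.
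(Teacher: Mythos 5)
Your overall architecture --- splitting $p(t,x,y)$ at the hitting time of $a^*$ into the kernel killed at $a^*$ plus a crossing term, bounding the killed piece by classical Dirichlet heat kernel estimates on $D_\eps$, reducing the crossing piece to one-dimensional computations through the radial process, and obtaining lower bounds from near-diagonal estimates plus chaining --- is essentially the route of \cite{CL}, which is what the present paper cites for Theorem \ref{smalltime} (there the signed radial process solves a skew-Brownian SDE, which is what makes your ``explicit one-dimensional calculations'' legitimate); a small slip is that the part process on $D_\eps$ killed at $a^*$ is planar Brownian motion killed upon exiting $D_\eps$, with no reflection anywhere.

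The genuine gap is your source for the near-diagonal lower bound. You invoke a parabolic Harnack inequality for $\EE^0$ on the grounds that volume doubling and Poincar\'e ``are verifiable directly from the Dirichlet form''. Volume doubling for $m_p$ in the geodesic metric fails near $a^*$ at arbitrarily small scales: if $x\in D_\eps$ has $|x|_\rho=r$ small, the geodesic ball of radius $r$ about $x$ contains no point of $\IR_+$ (any $y\in\IR_+$ has $\rho(x,y)=r+|y|\ge r$) and has measure $\asymp r^2$, while the ball of radius $2r$ contains the segment $\{y\in\IR_+:\,|y|<r\}$ and hence has measure $\gtrsim p\,r$, so the doubling ratio blows up like $1/r$. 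More decisively, a parabolic Harnack inequality would force a Li--Yau bound $p(t,x,x)\asymp 1/m_p\bigl(B_\rho(x,\sqrt{t})\bigr)$, which contradicts the very two-term estimate you are proving: for $|x|_\rho=\delta\sqrt{t}$ with $\delta$ fixed and $t\le\delta^4$ the stated bounds give $p(t,x,x)\asymp \delta^2/t$, whereas $1/m_p\bigl(B_\rho(x,\sqrt{t})\bigr)\asymp 1/\sqrt{t}$. So that step fails, even in a localized form. The near-diagonal lower bounds are true but must be produced by hand: from the explicit skew-BM/radial-process kernel when one point is in $\IR_+$ or at $a^*$, and, for two points of $D_\eps$ near $a^*$, from the Dirichlet heat kernel lower bound in $D_\eps$ together with a lower bound on the crossing term (hit $a^*$ quickly, then spread out on the other piece); this is exactly how \cite{CL} argues for $X^0$ and how Section \ref{S:3} of this paper argues the analogous lower bounds for the drifted process. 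With that replacement your chaining scheme goes through; the remaining delicate points (sharp scaling of $\IP_x(\sigma_{a^*}\in ds)$ with the factor $1\wedge|x|_\rho/\sqrt{s}$, and the bookkeeping of the weight $p$ across $a^*$) are the ones you already flag.
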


The above theorem has covered all the cases for $x, y\in E$,  owing to  the fact that  $p(t,x,y)$ is symmetric in $(x, y)$, since $X^0$ is a symmetric Markov process.

To state the main result of this paper, for each  $\alpha>0$, we  first  define the canonical form $p^0_{0, \alpha}(t,x,y)$ of the transition density of $X$  as follows:
\begin{equation}\label{e:1.3}
p^0_{0, \alpha}(t,x,y):=\left\{
  \begin{aligned}
   & \frac{1}{\sqrt{t}}e^{-\alpha |x-y|^2/t},\quad 
 x\in \IR_+ ,y\in \IR_+;&
\\ 
& \frac{1}{\sqrt{t}}e^{-\alpha (|x|^2+|y|_\rho^2)/t},\quad 
x\in \IR_+,y\in D_\eps \cup\{a^*\} ;& 
\\ 
& \frac{1}{\sqrt{t}}e^{-\alpha (|x|_\rho^2+|y|^2)/t},\quad
x\in D_\eps \cup\{a^*\}, y\in \IR_+;& 
\\
 & \frac{1}{t}e^{-\alpha |x-y|^2/t}, \quad  x,y\in D_\eps \cup \{a^*\} \text{ with } \max\{|x|_\rho,|y|_\rho\}> 1;& 
\\ 
&   \frac{1}{\sqrt{t}}e^{-\alpha(|x|^2_\rho+|y|_\rho^2)/t}+\frac{1}{t}\left(1\wedge \frac{|x|_\rho}{\sqrt{t}}\right)\left(1\wedge \frac{|y|_\rho}{\sqrt{t}}\right)e^{-\alpha |x-y|^2/t},\quad \text{otherwise}. & 
  \end{aligned}
\right.
\end{equation}
\begin{rmk}
%\begin{description}
By comparing $p(t,x,y)$ in  Theorem \ref{smalltime} with \eqref{e:1.3}, one can see that for every $T>0$, there exist $\alpha_1, \alpha_2>0$ such that
\begin{equation}\label{RMK:1.6}
p_{0,\alpha_1}^0(t,x,y)\lesssim p(t,x,y)\lesssim p^0_{0,\alpha_2}(t,x,y) \quad \text{for all } 0<t\le T, x,y \in E.
\end{equation}
For example, it is clear that $\rho(x,y)^2=(|x|+|y|_\rho)^2\asymp |x|^2+|y|_\rho^2$  for $x\in \IR_+$ and $y\in D_\eps$; and it follows from elementary geometry that  $\rho(x,y)\asymp  |x-y|$  for  $x,y\in D_\eps$ when   $\max\{|x|_\rho, |y|_\rho \}>1$. 
We rewrite the two-sided bounds for the transition density of BMVD only for  computation convenience in this article. 
\end{rmk}
The main result of this paper is the following:
\begin{thm}\label{T:smalltime}
Let $T>0$ be fixed. Let $b$ be a  measurable function  in $L^{p_1, p_2}(E)$ with  $p_1\in (1, \infty]$ and $p_2\in (2,\infty]$.  The transition density of BMVD with drift  $X$ associated with \eqref{Dirichlet-drifted-BMVD} exists.  We denote   the transition density of $X$  by $p^b(t,x,y)$.  There exist constants $C_{15},  C_{16}>0$,  and  $0<\alpha_1<\alpha_2$ such that 
\begin{equation*}
C_{15}p^0_{0,\alpha_2}(t,x,y)\le p^b(t,x,y)\le C_{16}p^0_{0,\alpha_1}(t,x,y), \qquad (t,x,y)\in (0,T]\times E\times E.
\end{equation*}
\end{thm}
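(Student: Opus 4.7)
The plan is to treat $X$ as a gradient perturbation of $X^0$ via Duhamel's principle. Existence of $X$ and the joint measurability of its transition kernel $p^b(t,x,y)$ follow at once from \eqref{Dirichlet-drifted-BMVD} by standard regular-Dirichlet-form theory. Writing the generator formally as $L^b f = L^0 f + b\cdot\nabla f$, Duhamel's identity reads
\[
p^b(t,x,y) = p^0(t,x,y) + \int_0^t\!\!\int_E p^0(t-s,x,z)\,b(z)\cdot\nabla_z p^b(s,z,y)\,m_p(dz)\,ds,
\]
which iterates to a Neumann series $p^b = \sum_{n\ge 0} q_n$ with $q_0 = p^0$ and $q_{n+1}$ obtained by convolving $p^0\cdot b$ against $\nabla_z q_n$. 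The whole argument reduces to showing that this series converges and obeys the Gaussian-type bound \eqref{e:1.3}.

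For the upper bound I would first prove a Gaussian gradient estimate of the form $|\nabla_z p^0(s,z,y)| \lesssim s^{-1/2}\,p^0_{0,\alpha}(s,z,y)$ for $z$ in the interior of either $\IR_+$ or $D_\eps$; this is standard parabolic interior regularity applied on the local piece, combined with the Aronson-type upper bound of Theorem \ref{smalltime}, and the singular point $a^*$ may be ignored since $m_p(\{a^*\}) = 0$. The core step is then the 3G-type convolution estimate
\[
\int_0^t\!\!\int_E p^0(t-s,x,z)\,|b(z)|\,s^{-1/2}\,p^0_{0,\alpha}(s,z,y)\,m_p(dz)\,ds \le \eta(T)\,p^0_{0,\alpha'}(t,x,y),
\]
valid for $t\in(0,T]$ with some $\alpha' < \alpha$ and $\eta(T)\downarrow 0$ as $T\downarrow 0$. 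H\"older's inequality in $z$ is applied separately on $\IR_+$ using $b_1 \in L^{p_1}$ with $p_1 > 1$ (the one-dimensional Kato threshold on the segment) and on $D_\eps\cup\{a^*\}$ using $b_2 \in L^{p_2}$ with $p_2 > 2$ (the two-dimensional Kato threshold on the sheet). Iteration gives $|q_n| \le (C\eta(T))^n\,p^0_{0,\alpha'}$, so the series converges absolutely for small $T$; Chapman--Kolmogorov patches the bound to any fixed $T > 0$.

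For the lower bound I would first combine the above upper bound with the lower bound of Theorem \ref{smalltime} to obtain a near-diagonal short-time estimate: for $\rho(x,y) \le c_0\sqrt{t}$ and $t$ sufficiently small, $p^b(t,x,y) \ge p^0(t,x,y) - \sum_{n\ge 1}|q_n(t,x,y)| \ge \tfrac12\,p^0(t,x,y) \gtrsim p^0_{0,\alpha_2}(t,x,y)$. Propagation to arbitrary $(x,y)$ uses the chain argument already present in \cite{CL}: along a near-geodesic from $x$ to $y$ one inserts $n \asymp \rho(x,y)^2/t$ intermediate points, applies Chapman--Kolmogorov, restricts each intermediate integration to a near-diagonal ball where the short-time bound applies, and multiplies. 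This yields the Gaussian factor $e^{-\alpha_2\rho(x,y)^2/t}$ after adjusting $\alpha_2$. In the mixed regime $|x|_\rho + |y|_\rho < 1$, the two additive terms in \eqref{e:1.3} are captured by two families of chains: those remaining in $D_\eps$ give the $t^{-1}$ term with the boundary factors $1\wedge(|x|_\rho/\sqrt{t})$, while those passing through $a^*$ give the $t^{-1/2}$ term.

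The main obstacle will be the convolution bookkeeping across the interface at $a^*$ in the mixed-dimensional cases of \eqref{e:1.3}: the integrand $p^0(t-s,x,z)\,|b(z)|\,|\nabla_z p^0(s,z,y)|$ changes analytic form as $z$ crosses $a^*$, so one must split the $z$-integral by which sheet $z$ lies in, pair each piece with the appropriate Gaussian expression from \eqref{e:1.3}, and verify that both pieces contribute to the expected mixed-dimensional bound with a slightly relaxed $\alpha'$. The split exponents $(p_1,p_2)$ are precisely the Kato-class thresholds for line and plane, and they are exactly what is needed to absorb the $s^{-1/2}$ factor from the gradient estimate uniformly across all regimes.
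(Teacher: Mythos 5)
Your overall architecture (Duhamel identity, Neumann series, a gradient estimate on the free kernel, a 3G-type convolution bound, then near-diagonal lower bounds plus chaining) is the same as the paper's. But there is a genuine gap at the step you call "standard": the gradient estimate $|\nabla_z p(s,z,y)|\lesssim s^{-1/2}\,p^0_{0,\alpha}(s,z,y)$, justified by interior parabolic regularity with the remark that $a^*$ "may be ignored since $m_p(\{a^*\})=0$." This is both unjustified and, as stated, false near the darning point. Interior regularity gives constants that blow up as $z\to a^*$, and the convolution integral runs over $z$ arbitrarily close to $a^*$, where $b$ is only $L^{p_2}$; the set $\{a^*\}$ being $m_p$-null does not help because what you need is a bound uniform in $z$ up to the interface. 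Moreover, for $z,y\in D_\eps$ with both $|z|_\rho,|y|_\rho$ small, differentiation in $z$ destroys the boundary factor $1\wedge(|z|_\rho/\sqrt{s})$ appearing in $p^0_{0,\alpha}$: the correct bound (the paper's Proposition \ref{P:3.2}(ii)) is $\tfrac{C}{s}e^{-\beta(|z|_\rho^2+|y|_\rho^2)/s}+\tfrac{C}{s^{3/2}}\bigl(1\wedge\tfrac{|y|_\rho}{\sqrt{s}}\bigr)e^{-\beta|z-y|^2/s}$, which is strictly larger than $s^{-1/2}p^0_{0,\alpha}(s,z,y)$ in that regime. The paper obtains such global estimates not by local regularity but via the signed radial process (skew Brownian motion) when one variable lies on $\IR_+$, and via the splitting $p=p_{D_\eps}+\bar p_{D_\eps}$ together with the Dirichlet heat kernel gradient bound of \cite{CKP} and the hitting-time representation of $\bar p_{D_\eps}$ when both variables lie on the plane.

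Because the available gradient bound loses the $z$-side boundary factor, the single uniform 3G estimate you propose cannot close the induction in the regime $x,y$ near $a^*$: one must recover the product factor $\bigl(1\wedge\tfrac{|x|_\rho}{\sqrt{t}}\bigr)\bigl(1\wedge\tfrac{|y|_\rho}{\sqrt{t}}\bigr)$ in the output while the input gradient only carries $\bigl(1\wedge\tfrac{|y|_\rho}{\sqrt{t}}\bigr)$. The paper handles this by a case analysis with several modified canonical forms $p^0_{1,\alpha},\dots,p^0_{5,\alpha}$ (with adjusted exponential constants so the comparisons iterate) and, for the critical cross term, by invoking the parabolic Kato-class convolution lemma \cite[Lemma 3.6]{CKP}; your "split the $z$-integral and relax $\alpha'$" remark does not address this loss. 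A smaller but real omission: you assert that existence of $p^b$ is immediate from Dirichlet form theory, whereas one must actually identify the constructed series with the transition density of the process defined by \eqref{Dirichlet-drifted-BMVD}; the paper does this through the resolvent expansion of Theorem \ref{T:2.1} and Laplace transforms. Your lower-bound scheme (near-diagonal estimate from the series remainder, then chaining, with the two terms near $a^*$ produced by paths staying in $D_\eps$ versus passing through $\IR_+$) matches the paper's argument and is fine.
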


Recall that an open set $D\subset \IR^d$ is called $C^{1,1}$ if there exist a localization radius $R_0>0$ and a constant $\Lambda_0>0$ such that for every $z\in \partial D$, there exist a $C^{1,1}-$function $\phi=\phi_z: \IR^{d-1}\rightarrow \IR$ satisfying $\phi(0)=0$, $\nabla \phi(0)=(0,\dots, 0)$, $\|\nabla \phi\|_\infty \le \Lambda_0$, $|\nabla \phi (x)-\nabla \phi (z)|\le \Lambda |x-z|$ and an orthonormal coordinate system $CS_z:\, y=(y_1, \dots , y_d):=(\tilde{y}, y_d)$ with its origin at $z$ such that
\begin{equation*}
B(z, R_0)\cap D=\{y\in B(0, R_0)\textrm{ in }CS_z: y_d>\phi (\tilde{y})\}.
\end{equation*}
For the state space $E$ in this paper, an open set $D\subset E$ will be called $C^{1,1}$ in $E$, if $D\cap \IR_+$ is a $C^{1,1}$ open set in $\IR_+$, and $D\cap \left(D_\eps\cup \{a^*\}\right)$ is also a $C^{1,1}$ open set in $D_\eps\cup \{a^*\}$.

Let $D$ be a bounded $C^{1,1}$ domain of $E$ containing $a^*$.   We denote by $X^{D}$ the part process of $X$ killed upon exiting  domain $D$ of $E$, and  let   $p^b_D(t,x,y)$ be the transition density of $X^{D}$. Define the Green function of $X^D$  as
\begin{equation*}
G^b_{D}(x,y):=\int_{0}^\infty p^b_{D}(t,x,y)dt.
\end{equation*}
 The following theorem   provides   two-sided bounds for the Green function $G_D^b$.   Recall for any bounded open set $U\subset E$,   $\delta_U(\cdot):=\rho(\cdot, \partial U)$  denotes the $\rho$-distance to
 the boundary $\partial U$.   For notation convenience, we set ${U_1}:=(D\cap \IR_+)\setminus \{a^*\}$ and ${U_2}:=D\cap {D_\eps}$.
 Note that  $a^*\in \partial {U_1} \cap \partial {U_2}$ and $U_1=(0, l)$ for some $l>0$.

\begin{thm} \label{T:1.10} 
Let $b$ be a measurable function in    $L^{p_1, p_2}(E)$ with $p_1\in (1, \infty]$ and $p_2\in (2,\infty]$.  Suppose  $D$ is a connected bounded $C^{1,1}$ domain of $E$ containing   $a^*$.   Let $G^b_{D}(x,y)$ be the Green function of $X$ killed upon exiting $D$, then we have for $x\not= y$ in $D$ that 
\begin{equation*}
G^b_{D}(x,y)\asymp\left\{
  \begin{array}{ll}
    \delta_D(x)\wedge \delta_D(y),
&
\hbox{$x\in U_1 \cup \{a^*\}$, $y\in U_1 \cup \{a^*\}$;}
\\ \\
  \delta_D(x)  \delta_D(y)+\ln\left(1+\frac{\delta_{U_2}(x)\delta_{U_2}(y)}{|x-y|^2}\right),
& \hbox{$x\in U_2$, $y\in U_2$;}
\\ \\
  \delta_D(x) \delta_D(y),
& \hbox{$x\in U_1\cup \{a^*\}$, $y\in U_2$ or vice versa.}
  \end{array}
\right.
\end{equation*}
\end{thm}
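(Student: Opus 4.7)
My strategy is to derive the Green function estimates by first establishing two-sided Dirichlet heat kernel bounds for the part process $X^D$ and then integrating them in time. Throughout I use that $\int_{T_D}^\infty p^b_D(t,x,y)\,dt$ decays exponentially (since $D$ is bounded), so only the short-time regime matters.

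\emph{Step 1 (Dirichlet heat kernel estimates).} Because $D$ is a bounded $C^{1,1}$ domain of $E$ containing $a^*$ as an interior point, $\partial D$ lies at positive $\rho$-distance from $a^*$, and $\partial D$ splits as a disjoint union of a one-dimensional $C^{1,1}$ piece in $\IR_+$ and a two-dimensional $C^{1,1}$ piece in $D_\eps$. Starting from the global bound of Theorem~\ref{T:smalltime}, together with the parabolic boundary Harnack principle for the driftless process $X^0$ from \cite{CL} and a Duhamel comparison that transfers Dirichlet boundary decay from $X^{0,D}$ to $X^D$, I would establish, for $t\in(0,T_D]$ with $T_D$ depending only on $D$, a two-sided estimate for $p^b_D(t,x,y)$ that inserts the factor $\bigl(1\wedge \delta_D(x)/\sqrt t\bigr)\bigl(1\wedge \delta_D(y)/\sqrt t\bigr)$ in front of the canonical form $p^0_{0,\alpha}(t,x,y)$ of \eqref{e:1.3}, with the additional twist that when $x,y\in U_2$ the short-range 2-d Gaussian piece in \eqref{e:1.3} carries the stronger intrinsic 2-d Dirichlet factor $\bigl(1\wedge \delta_{U_2}(x)/\sqrt t\bigr)\bigl(1\wedge \delta_{U_2}(y)/\sqrt t\bigr)$, reflecting the Dirichlet condition that the hole boundary $\partial B_\eps$ imposes on the local 2-d heat flow in $U_2$.

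\emph{Step 2 (Time integration).} I would then integrate $G^b_D(x,y)=\int_0^{T_D}p^b_D(t,x,y)\,dt + O(1)\cdot e^{-cT_D}$ case by case. When $x,y\in U_1\cup\{a^*\}$, a standard one-dimensional calculation of $\int_0^{T_D} t^{-1/2}\bigl(1\wedge \delta_D(x)/\sqrt t\bigr)\bigl(1\wedge \delta_D(y)/\sqrt t\bigr)e^{-c\rho(x,y)^2/t}\,dt$ yields $\delta_D(x)\wedge \delta_D(y)$. When $x,y\in U_2$, the short-range 2-d piece with the intrinsic $\delta_{U_2}$ decay integrates to the classical two-dimensional Dirichlet Green function bound $\ln\bigl(1+\delta_{U_2}(x)\delta_{U_2}(y)/|x-y|^2\bigr)$, while the long-range / through-$a^*$ piece of \eqref{e:1.3}, namely $t^{-1/2}e^{-\alpha(|x|_\rho^2+|y|_\rho^2)/t}$ dressed with the two outer boundary factors, contributes the additive $\delta_D(x)\delta_D(y)$ summand. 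For $x\in U_1\cup\{a^*\}$ and $y\in U_2$, direct integration of the $t^{-1/2}e^{-\alpha(|x|^2+|y|_\rho^2)/t}$ canonical form with the two boundary factors yields $\delta_D(x)\delta_D(y)$.

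\emph{Main obstacle.} The delicate step is Step 1, namely producing a two-sided Dirichlet heat kernel bound that is uniform across the transition point $a^*$, where the ambient dimension changes. Euclidean BHP statements do not directly apply at $a^*$, so one cannot simply quote a standard reference to impose the boundary decay. My remedy is first to establish the bound for the driftless $X^{0,D}$ by exploiting the radial decomposition of $X^0$ around $a^*$ from \cite{CL} together with the fact that $a^*$ is an interior point of $D$ (so no boundary decay is needed at $a^*$), and then to transfer the bound to $X^D$ via the same Duhamel perturbation machinery used to prove Theorem~\ref{T:smalltime}, with the $L^{p_1,p_2}$ integrability of $b$ controlling the drift contribution. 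Once Step 1 is in place, the integrations in Step 2 are routine.
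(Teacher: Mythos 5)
Your plan hinges entirely on Step 1, and that is where the genuine gap lies. The two-sided Dirichlet heat kernel bound you posit for $p^b_D(t,x,y)$ --- the canonical form of \eqref{e:1.3} dressed with factors $\bigl(1\wedge \delta_D(x)/\sqrt t\,\bigr)\bigl(1\wedge \delta_D(y)/\sqrt t\,\bigr)$, and with intrinsic $\delta_{U_2}$ factors on the two-dimensional Gaussian piece --- is a statement strictly stronger than Theorem \ref{T:1.10} itself, and it is proved neither in this paper nor in the references you invoke: \cite{CL} establishes global heat kernel and Green function estimates for $X^0$ but no Dirichlet heat kernel with decay at $\partial D$ and no parabolic boundary Harnack principle, while \cite{CKP} is purely Euclidean. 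The tools you sketch do not close this. The signed radial process device from \cite{CL} exploits rotational invariance of $X^0$ around $a^*$ and so says nothing about the part process in a general (non-rotationally-invariant) $C^{1,1}$ domain $D$; it cannot produce the $\partial D$-decay even for the driftless $X^{0,D}$. And a ``Duhamel comparison that transfers Dirichlet boundary decay from $X^{0,D}$ to $X^D$'' requires gradient estimates for the \emph{killed} kernel $\nabla_z p_D(t,z,y)$ that retain the boundary decay in $y$; obtaining such estimates (simultaneously near $\partial D$ and across $a^*$) is precisely the hard technical content of \cite{CKP}, which would have to be redone on $E$, and your proposal gives no argument for it. The lower-bound half of Step 1 would in addition need its own chaining argument inside $D$ respecting the boundary, which is also not addressed. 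Granting Step 1, the integrations in Step 2 are indeed routine (modulo regime-by-regime checks such as $x,y\in U_1$ both near the outer endpoint, where the claimed bounds only integrate to $\delta_D(x)\wedge\delta_D(y)$ because of the constraint $|x-y|\ge|\delta_D(x)-\delta_D(y)|$), but as written Step 1 is an assertion, not a proof.

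For comparison, the paper does not go through boundary-decay heat kernels at all: it proves Theorem \ref{T:1.10} by repeating, with minor changes for the drift, the argument of \cite[Theorem 1.5]{CL}, which works directly at the level of Green functions. Roughly, one uses the strong Markov property at $\sigma_{a^*}$ to split, e.g.\ for $x,y\in U_2$, $G^b_D(x,y)$ into the Green function of drifted two-dimensional Brownian motion killed upon exiting the planar $C^{1,1}$ domain $U_2$ (whose two-sided bounds, giving the logarithmic term, are available from the Euclidean literature such as \cite{CKP}) plus a term $\IP_x\bigl(\sigma_{a^*}<\tau_D\bigr)\,G^b_D(a^*,y)$, and then shows the hitting probability and $G^b_D(a^*,\cdot)$ are comparable to $\delta_D(x)$ and $\delta_D(y)$ respectively; the one-dimensional and mixed cases are handled similarly. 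If you want to repair your write-up, the most economical fix is to adopt this decomposition (using Theorem \ref{T:smalltime} and the Euclidean drifted estimates as input) rather than attempting to first establish Dirichlet heat kernel estimates with boundary decay on $E$.
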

For notation convenience, given a domain $D$  of $E$,   in this paper we set
\begin{equation}\label{e:1.6}
\bar{p}_D(t,x,y):=p(t,x,y)-p_D(t,x,y),
\end{equation}
where $p_D(t,x,y)$ is the transition density of the part process  $(X^0)^D$  killed upon exiting $D$.
In other words, for any non-negative function $f\geq 0$ on $E$,
\begin{equation}\label{e:1.7}
 \int_E  \bar{p}_D (t, x, y)  f(y) m_p(dy) = \IE_x \left[ f(X^0_t); t\geq \tau_D \right].
\end{equation}
The intuition for   $p_D(t,x,y)$ and $\bar{p}_D(t,x,y)$ is that for $p_D(t,x,y)$, the trajectory starting from $x$ hits $y$ at time $t$ without exiting $D$, while for $\bar{p}_D(t,x,y)$, the trajectory has to exit $D$ before reaching  $y$ at time $t$. In the same manner, for BMVD with drift $X$,
\begin{equation*}
\bar{p}^b_D(t,x,y):=p^b(t,x,y)-p^b_D(t,x,y).
\end{equation*}

We use $C^\infty_c(E)$ to denote the space of continuous functions with compact support in $E$ so that
their restrictions to ${D_\eps}$ and $\IR_+$ are smooth on $\overline{D}_\eps$ and on $\IR_+$, respectively.
 We also follow the convention that in the statements of the theorems or propositions $C, \alpha, \beta, \alpha_1, \beta_1, C_1, \cdots$ denote positive constants, whereas  the exact values of the positive constants appearing in their proofs are  unimportant and may change
 from line to line.

The remaining of this paper is organized as follows: Let  $\IP$ be the distribution law of $X^0$.  A  family of measures $\IQ$  can be {\it  formally} defined   through the following Girsanov transform:
\begin{equation}\label{Girsanov}
\frac{d\IQ}{d\IP}\bigg|_{\FF_t}=M_t:=\exp\left(\int_0^t b(X^0_s)dX^0_s-\int_0^t |b(X^0_s)|^2 ds \right),
\end{equation}
where  $\FF_t=\sigma\{X^0_s, s\le t\}$.  In Section \ref{S:2}, we present the rigorous definition for $\IQ$ and verify that $(X, \IQ)$ and $(X^0, \IP)$ are different representations of the same process. We also show in \ref{S:2}  that the  infinitesimal  generator of $X$  is indeed the infinitesimal generator of $X^0$ under gradient perturbation.  However, any function in the domain of the   generator of $X$ has to satisfy the ``zero flux" condition at the darning point $a^*$. The resolvents of $X$  can also be represented in terms of the resolvents of $X^0$ via Duhamel's principle. In Section \ref{S:3}, we establish  two-sided short time heat kernel estimates for $X$. The upper bound estimates  are  established combining the smallness of the gradients of the transition densities of $X^0$ and  Duhamel's principle. We show that the transition density function of $X$ has the same type  of upper bound estimates as those for $X^0$. The lower bound estimates, on the other hand,  are  obtained via the conjunction of short-time  near-diagonal lower bound estimates and a chain argument. The two-sided Green function estimates are  provided in Section \ref{S:4}.

%We claim in Theorem \ref{T:2.1} with its detailed proof given in Appendix that 
%where $X^0$ is BMVD without drift defined in Definition \ref{def-bmvd}. The rigorous interpretation of the right hand side above will be  explained in Section 2. 
%Theorem \ref{T:2.1} also asserts that the infinitesimal generator of drifted BMVD as an $L^2$-generator denoted by $\mathcal{L}^b$ is the following:
%\begin{equation*}
%\mathcal{L}^bu=\mathcal{L}u+b(x)\nabla u(x)\cdot \mathbf{1}_{D_0}+b(x)\frac{du}{dx}\cdot \mathbf{1}_{\IR_+} %\quad \mathcal{D(L}^b)=\mathcal{D(L)},
%\end{equation*}
%with its domain being
%\begin{equation*}
%\mathcal{D(L}^b)=\left\{u\in \mathcal{D}(\EE): \mathcal{L}^bu\in L^2(E), \textrm{u satisfies the zero flux condition at }a^*\text{ as above}\right\},
%\end{equation*}
%where $\mathcal{L}$ is the infinitesimal generator for non-drifted BMVD, and the ``flux" $\mathcal{N}_p(u)(a^*)$ at the darning point $a^*$ of a function $u$ in the domain of the infinitesimal generator is defined as follows:
%\begin{equation}\label{flux}
%\mathcal{N}_p(u)(a^*):=\int_{E}\nabla u(x)\cdot u_0(x)m_p(dx)+\int_E \Delta u(x)u_0(x)m_p(dx),
%\end{equation}

\section{Preliminaries: Girsanov Transform of BMVD and Its Resolvent Kernel}\label{S:2}

Let $b:E\to \IR$ be a measurable function in  $L^{p_1, p_2}(E)$ with $p_1\in (1, \infty]$ and $p_2\in (2,\infty]$.  We have defined  BMVD  with drift   $X$ in Definition \ref{Def-driftedBMVD} in terms of Dirichlet form. In this section, we characterize $X$ through Girsanov transform and identify its resolvent kernel.

To begin with, we give a  rigorous definition  for the   family of probability measures  $\IQ$  characterized by \eqref{Girsanov}.
%\begin{equation*}
%\frac{d\IQ}{d\IP}\bigg|_{\FF_t}=M_t:=\exp\left(\int_0^t b(X^0_s)dX^0_s-\int_0^t |b(X^0_s)|^2 ds \right).
%\end{equation*}
Towards  this purpose,  we first set
$$
M^{(1)}_t:=\int_0^t \mathbf{1}_{\IR_+}(X^0_s)dX^0_s \quad  \text{and} \quad  M^{(2)}_t:=\int_0^t \mathbf{1}_{D_\eps}(X^0_s)dX^0_s.
$$ 
To see how the above two  stochastic integrals  are rigorously defined, we set  $A_\delta:=\{x\in \IR_+: \, |x|>\delta\}$, $B_\delta:=\{x\in D_\eps: \, |x|_\rho>\delta\}$. We inductively define a sequence of stopping times as follows:
\begin{align*}
S^\delta_0&:=\inf\{t>0: X^0_t\in A_\delta \};\\
T^\delta_1&:=\sigma_{\{a^*\}}; 
\end{align*}
then for $k=1, 2\dots$, 
\begin{align*}
S^\delta_{k}&:=S_{k-1}^\delta \circ \theta_{T^\delta_{k}}+T^\delta_{k}; \\
T^\delta_{k+1}&:=\sigma_{\{a^*\}}\circ\theta_{S^\delta_{k}}+S^\delta_{k}; \\
&\cdots
\end{align*}
For each $t>0$, we define
\begin{equation*}
M^{(1), \delta}_t:=\sum_{n\ge 1}\int_{S^\delta_{n-1}\wedge t}^{T^\delta_n\wedge t}1\, dX^0_s=\int_0^t \sum_{n\ge 1}\mathbf{1}_{\left[S_{n-1}^\delta\wedge t, \;T_n^\delta\wedge t\right]}(s)\;dX^0_s.
\end{equation*}
The above stochastic integral is well-defined because the restriction of $X^0$ on $\IR_+$ is an   1-dimensional Brownian motion, and the summation is finite. Since for each fixed $t>0$,
\begin{equation*}
\sum_{n\ge 1}\mathbf{1}_{\left[S_{n-1}^\delta\wedge t, \;T_n^\delta \wedge t\right]}(s) \uparrow \mathbf{1}_{\{0\le s\le t:\,X^0_s\in \IR_+\}} ,  \quad \text{as }\delta\rightarrow 0,
\end{equation*}
it follows that there is a unique square-integrable martingale
\begin{equation*}
M_t^{(1)}=\int_0^t \mathbf{1}_{\{X^0_s\in \IR_+\}}dX^0_s.
\end{equation*}
Similarly, in order  to  rigorously  define  $M_t^{(2)}$, we set
\begin{align*}
\widehat{S}^\delta_0&:=\inf\{t>0: X^0_t\in B_\delta\};
\\
\widehat{T}^\delta_1&:=\sigma_{\{a^*\}}; 
\end{align*}
then  for $k=1,2,\dots,$
\begin{align*}
\widehat{S}^\delta_{k}&:=\widehat{S}^\delta_{k-1}\circ \theta_{\widehat{T}^\delta_{k}}+\widehat{T}^\delta_{k}; \\
\widehat{T}^\delta_{k+1}&:=\sigma_{\{a^*\}}\circ\theta_{\widehat{S}^\delta_{k}}+\widehat{S}^\delta_{k}; \\
&\cdots
\end{align*}
Then for each $t>0$, we set
\begin{equation*}
M^{(2), \delta}_t:=\sum_{n\ge 1}\int_{\widehat{S}^\delta_{n-1}\wedge t}^{\widehat{T}^\delta_n\wedge t}1\, dX^0_s=\int_0^t \sum_{n\ge 1}\mathbf{1}_{\left[\widehat{S}_{n-1}^\delta\wedge t,\; \widehat{T}_n^\delta\wedge t\right]}(s)\;dX^0_s.
\end{equation*}
It follows that
\begin{equation*}
\sum_{n\ge 1}\mathbf{1}_{\left[\widehat{S}_{n-1}^\delta\wedge t,\; \widehat{T}_n^\delta \wedge t\right]}(s) \uparrow \mathbf{1}_{\{0\le s\le t:\,X^0_s\in D_\eps\}}, \quad \text{as }\delta\rightarrow 0.
\end{equation*}
Therefore  there is a unique square-integrable martingale
\begin{equation*}
M_t^{(2)}=\int_0^t \mathbf{1}_{\{X^0_s\in D_\eps\}}dX^0_s.
\end{equation*}
Now we define the  probability measure $\IQ$ as follows:
\begin{align*}
\frac{d\IQ}{d\IP}\bigg|_{\FF_t}=M_t:=\exp\bigg(\int_0^t b_1(M^{(1)}_s)dM^{(1)}_s+\int_0^t b_{2}\Big(M^{(2)}_s\Big)dM_s^{(2)}
-\frac{1}{2} \int_0^t |b(X^0_s)|^2 ds \bigg),\quad t\ge 0,
\end{align*}
where  $\FF_t=\sigma\{X^0_s, s\le t\}$,    and $\IP$ is the distribution law of $X^0$.   Let $G_\alpha^0$ be the resolvents of $X^0$. The next  theorem identifies the process determined by $\IQ$ is indeed BMVD with drift $X$. 

\begin{thm}\label{T:2.1}  Let  $X$ be the process associated with the  Dirichlet space \eqref{Dirichlet-drifted-BMVD}.  $(X, \IQ)$ and $(X^0, \IP)$ are different descriptions of the same process. Denote the resolvent operators of $X^0$ and $X$ by $(G^0_\alpha)_{\alpha>0}$ and  $(G_\alpha^b)_{\alpha>0}$ on $L^2(E, m_p)$, respectively. Then there exists some $\alpha_0>0$ such that it holds for all $\alpha>\alpha_0$ and all  $f\in L^2(E, m_p)$ that 
\begin{description}
\item{(i)} $G_\alpha^0(b\cdot \nabla G_\alpha ^0)^nf$ is in $\mathcal{D}(\EE^0)$, for every $n=0, 1,2,\cdots$.
\item{(ii)} $G^b_\alpha f\in  \mathcal{D}(\EE^0)$, and  $\displaystyle{G_\alpha^bf=\sum_{n=0}^\infty G_\alpha^0(b\cdot \nabla G_\alpha ^0)^n}f$, where the infinite sum converges in norm $\|\cdot \|_{1,2}$.
\end{description}
The norm $\|\cdot \|_{1,2}$ above   is understood  as the sum of the $\|\cdot \|_{1,2}$ on $D_\eps$ and $\IR_{+}$, i.e., $\|f\|_{1,2}:=\|f\cdot \mathbf{1}_{D_\eps}\|_{W^{1,2}(D_\eps)}+\|f\cdot \mathbf{1}_{\IR_+}\|_{W^{1,2}(\IR_+)}$.
\end{thm}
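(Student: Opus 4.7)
The plan has two complementary threads: identify the Girsanov-transformed process $(X^0,\IQ)$ with the diffusion $X$ of Definition \ref{Def-driftedBMVD}, and derive the resolvent series by iterating a first-order perturbation identity. I would begin by upgrading $M_t$ from a local martingale to a true $\IP$-martingale. The hypothesis $b \in L^{p_1,p_2}(E)$ with $p_1 > 1$ and $p_2 > 2$, combined with the Gaussian-type heat kernel bounds of Theorem \ref{smalltime}, yields through a standard Khasminskii computation that
\begin{equation*}
\sup_{x \in E} \IE_x \left[ \exp\!\left( c \int_0^t |b(X^0_s)|^2 \, ds \right) \right] < \infty
\end{equation*}
for sufficiently small $t$, and this bound extends to any finite horizon by iterated conditioning. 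Novikov's criterion then guarantees that $M$ is a genuine martingale and $\IQ$ a well-defined probability measure on each $\FF_t$.

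With $\IQ$ in hand, Girsanov applied to the orthogonal continuous martingales $M^{(1)},M^{(2)}$ constructed just before the theorem shows that under $\IQ$ the canonical process is a continuous semimartingale with the same quadratic variation as $X^0$ and with drift density $b$ on each of $\IR_+$ and $D_\eps$. To identify it with $X$, I would compute its $\alpha$-resolvent $R_\alpha$ and verify via a Duhamel argument that
\begin{equation*}
R_\alpha f = G_\alpha^0 f + G_\alpha^0(b \cdot \nabla R_\alpha f), \qquad f \in L^2(E, m_p).
\end{equation*}
The same identity holds for $G_\alpha^b$ directly from the Dirichlet form definition: testing $\EE^b_\alpha(G_\alpha^b f, g) = (f, g)$ against $g \in \mathcal{D}(\EE^0) = \mathcal{D}(\EE^b)$ and invoking \eqref{Dirichlet-drifted-BMVD} yields $G_\alpha^b = G_\alpha^0 + G_\alpha^0(b \cdot \nabla) G_\alpha^b$. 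Once a contraction estimate for the associated operator is in place (see the last paragraph), uniqueness of the fixed point forces $R_\alpha = G_\alpha^b$, and the identification of $(X^0,\IQ)$ with $X$ follows from equality of resolvents.

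For the expansion itself, set $T_\alpha u := G_\alpha^0(b \cdot \nabla u)$ and iterate the fixed-point identity to obtain the formal series $G_\alpha^b f = \sum_{n \ge 0} G_\alpha^0 (b \cdot \nabla G_\alpha^0)^n f$. Claim (i), that every term $G_\alpha^0 (b \cdot \nabla G_\alpha^0)^n f$ lies in $\mathcal{D}(\EE^0)$, reduces to the boundedness of $T_\alpha$ on $\mathcal{D}(\EE^0)$ together with $G_\alpha^0 : L^2(E,m_p) \to \mathcal{D}(\EE^0)$. Claim (ii) is then geometric convergence in $\|\cdot\|_{1,2}$, provided $\|T_\alpha\|_{\mathcal{D}(\EE^0) \to \mathcal{D}(\EE^0)} < 1$, which I would secure by taking $\alpha \ge \alpha_0$ sufficiently large.

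The principal obstacle is this norm estimate on $T_\alpha$. Controlling $\|b \cdot \nabla u\|_{L^2(E, m_p)}$ requires a H\"older bound of the form
\begin{equation*}
\|b \cdot \nabla u\|_{L^2(D_\eps)} \le \|b\|_{L^{p_2}(D_\eps)} \|\nabla u\|_{L^{2p_2/(p_2-2)}(D_\eps)}
\end{equation*}
together with its one-dimensional analogue on $\IR_+$, both of which demand upgrading $\nabla u$ from $L^2$ to strictly higher integrability. This is supplied by interior elliptic regularity for $(\alpha - \tfrac12\Delta)^{-1}$ on each sheet, which places $G_\alpha^0 h$ in $W^{2,2}_{\rm loc}$ away from $a^*$; Sobolev embedding on the two-dimensional $D_\eps$ then yields $\nabla G_\alpha^0 h \in L^r_{\rm loc}$ for every finite $r$, while the $\IR_+$ estimate is immediate since $p_1 > 1$. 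These local bounds must be patched through $a^*$ using the continuity constraint $u(0) = u|_{\partial D_\eps}$ encoded in $\mathcal{D}(\EE^0)$, and the $O(1/\sqrt{\alpha})$ decay of the resolvent norm as $\alpha \to \infty$ delivers the smallness $\|T_\alpha\| < 1$ needed to close the argument.
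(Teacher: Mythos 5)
Your overall skeleton---exponential martingale via Khasminskii--Novikov, Girsanov identification, and a resolvent perturbation series made convergent by taking $\alpha$ large---is the same strategy the paper relies on, since its proof is an adaptation of \cite[Theorem 5.5]{ChenZ}. The genuine gap is in how you propose to close the contraction estimate. You want $b\cdot\nabla u$ to be an honest $L^2$ function, estimated by H\"older as $\|b\cdot\nabla u\|_{L^2}\le \|b\|_{L^{p_i}}\|\nabla u\|_{L^{2p_i/(p_i-2)}}$. On the half-line this is not ``immediate since $p_1>1$'': for $1<p_1<2$ the exponent $2p_1/(p_1-2)$ is negative, so no such H\"older pairing exists, and even granting the best one-dimensional regularity $u'\in L^\infty(\IR_+)$ for $u=G^0_\alpha h$, one only gets $b_1u'\in L^{p_1}$, not $L^2$, because $b_1\in L^{p_1}$ with $p_1<2$ need not be locally square-integrable. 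The same defect undermines your Khasminskii step: if $b_1\notin L^2_{\rm loc}$ then $\sup_x\IE_x\int_0^t|b(X^0_s)|^2\,ds$ can be infinite, so the claimed exponential moment bound fails. Since the theorem is asserted for all $p_1\in(1,\infty]$, your argument does not cover the stated range.

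Even on $D_\eps$, where $p_2>2$ makes the exponent meaningful, the contraction is not actually established. What you need is a global, quantitative bound $\|\nabla G^0_\alpha h\|_{L^{2p_2/(p_2-2)}(D_\eps)}\le \kappa(\alpha)\|h\|_{L^2}$ with $\kappa(\alpha)\to 0$; interior $W^{2,2}_{\rm loc}$ regularity ``patched through $a^*$'' controls neither the behaviour up to $\partial D_\eps$ (where the darning constraint, and for the generator the zero-flux condition of Theorem \ref{T:2.2}, replace a standard boundary condition) nor any $\alpha$-dependence, and you never fix the space and norm in which the iteration $u_n=G^0_\alpha(b\cdot\nabla u_{n-1})$ is run---note that $b\cdot\nabla u_{n-1}\in L^2$ only if the previous gradient already enjoys the higher integrability you are trying to prove. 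The paper's route, following Chen--Zhao, avoids all of this: one never requires $b\cdot\nabla u\in L^2$, but reads $G^0_\alpha(b\cdot\nabla u)$ off the $\EE^0_\alpha$-Riesz representation of the functional $v\mapsto \int_E b\cdot\nabla u\, v\, dm_p$, which is bounded by shifting the integrability onto the test function $v$ through Kato-class/form smallness of $b$ relative to $X^0$ (only first-order Sobolev information is used), so that $u\mapsto G^0_\alpha(b\cdot\nabla u)$ is an $\EE^0_\alpha$-contraction for large $\alpha$ and the series converges in a norm equivalent to $\|\cdot\|_{1,2}$. If you wish to keep your route, you would need at least to restrict to $p_1\ge 2$ (or give a separate argument on $\IR_+$) and to prove global second-order estimates with explicit $\alpha$-decay on the darned space; as written, the key smallness claim $\|T_\alpha\|<1$ is unsupported.
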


%\begin{equation}\label{EEb}
%\EE^b(f, g):=\EE^0(f, g)-(b\cdot \nabla f, g), \quad \mathcal{D}(\EE^b)=\mathcal{D}(\EE).
%\end{equation}

\begin{proof}
The proof follows an argument similar to  \cite[Theorem 5.5]{ChenZ} with few minor changes. The details are omitted.

\end{proof}

We now describe the infinitesimal generator associated with $X$. For notation simplicity we let
$$
\FF^0:=\left\{f:\, f|_{D_\eps}\in W^{1,2}_0(D_\eps),\, f|_{\IR_+}\in W_0^{1,2}(\IR_+), f(x) = f(0) \, \text{ q.e. } x \in  \partial D_\eps\right\}.
$$
Let $u_0(x):=\IE_x[e^{-\sigma_{\IR_+}}]$ for  $x\in D_\eps$ and $u_0(x):=\IE_x[e^{-\sigma_{D_\eps}}]$  for   $x\in \IR_+$. It is known that $u_0|_{D_\eps}\in W^{1,2}(D_\eps)$, $u_0|_{\IR_+}\in W^{1,2}(\IR_+)$. For any $u\in \mathcal{D}(\EE^b)=\mathcal{D}(\EE^0)$, we define the its {\it flux} at $a^*$ by
$$
\mathcal{N}_p(u)(a^*):=\int_E\nabla u(x)\cdot \nabla u_0(x)m_p(dx)+\int_E \Delta u(x)u_0(x)m_p(dx).
$$
The followting theorem characterizes the infinitesimal generator for  $X$. 
\begin{thm}\label{T:2.2}
$X$ has an infinitesimal generator $\mathcal{L}^b :=\frac{1}{2}\Delta +b\cdot \nabla $.  Its domain $\mathcal{D}(\mathcal{L}^b)$ is a subspace of $\mathcal{D}(\EE^b)$ such that a function $u\in\mathcal{D}(\mathcal{\EE}^b)$ is in  $\mathcal{D}(\mathcal{L}^b)$ if and only if the distributional Laplacian $\Delta u$  exists as an $L^2$-integrable function on $E\backslash \{a^*\}$ and $u$ satisfies zero flux property  at $a^*$. 
\end{thm}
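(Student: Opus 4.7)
The plan is to characterize $\mathcal{D}(\mathcal{L}^b)$ via the standard Dirichlet-form pairing and then extract the pointwise action of the generator together with the boundary condition at $a^*$ by testing against two carefully chosen families. Recall that $u \in \mathcal{D}(\mathcal{L}^b)$ with $\mathcal{L}^b u = h$ iff there exists $h \in L^2(E; m_p)$ such that $\EE^b(u,v)=-(h,v)_{m_p}$ for every $v \in \mathcal{D}(\EE^b)$. Substituting \eqref{Dirichlet-drifted-BMVD}, this rewrites as $\EE^0(u,v) = (b\cdot\nabla u - h,\,v)_{m_p}$.

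First I would identify the generator's pointwise action by testing against $v \in C_c^\infty(E\setminus\{a^*\})$, i.e.\ functions smooth and compactly supported in the interior of either $D_\eps$ or $\IR_+\setminus\{a^*\}$. On each smooth piece classical integration by parts gives $\EE^0(u,v) = -\tfrac12(\Delta u, v)_{m_p}$, so that the distributional identity $\tfrac12 \Delta u + b\cdot\nabla u = h$ holds on $E\setminus\{a^*\}$. Combined with $h \in L^2$ and the assumption $b \in L^{p_1,p_2}(E)$, an interior elliptic bootstrap (H\"older yields $b\cdot\nabla u \in L^q_{\mathrm{loc}}$ with $q < 2$, Calder\'on--Zygmund for $\Delta u$ together with the $2$-dimensional Sobolev embedding improves $\nabla u$, then iterate) upgrades $\Delta u$ to an $L^2$ function on $E\setminus\{a^*\}$.

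Next, to extract the zero-flux condition I would use the function $u_0$ itself as a test function. Since $u_0|_{D_\eps} \in W^{1,2}(D_\eps)$, $u_0|_{\IR_+} \in W^{1,2}(\IR_+)$, and $u_0$ is continuous across $a^*$, one has $u_0 \in \mathcal{D}(\EE^0) = \mathcal{D}(\EE^b)$. Substituting $v = u_0$ in $\EE^b(u,v) = -(h,v)_{m_p}$ and replacing $h$ by $\tfrac12\Delta u + b\cdot\nabla u$ via the previous step, the matching $(b\cdot\nabla u, u_0)$ contributions on the two sides cancel, leaving
\begin{equation*}
\tfrac12 \int_E \nabla u \cdot \nabla u_0 \, m_p(dx) + \tfrac12 \int_E \Delta u \cdot u_0 \, m_p(dx) = 0,
\end{equation*}
which after multiplication by $2$ is exactly $\mathcal{N}_p(u)(a^*) = 0$.

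For the converse direction, given $u \in \mathcal{D}(\EE^b)$ with $\Delta u \in L^2(E\setminus\{a^*\})$ and $\mathcal{N}_p(u)(a^*) = 0$, I would set $h := \tfrac12\Delta u + b\cdot\nabla u$ and verify $\EE^b(u,v) = -(h,v)_{m_p}$ for every $v \in \mathcal{D}(\EE^b)$ by writing $v = v_1 + c\,u_0$ with $c = v(a^*)/u_0(a^*)$ and $v_1 \in \{w \in \mathcal{D}(\EE^b): w(a^*) = 0\}$. The $v_1$-part is handled by density of $C_c^\infty(E\setminus\{a^*\})$ in $\{w \in \mathcal{D}(\EE^0): w(a^*) = 0\}$ together with the interior identity and IBP; the $cu_0$-part reduces to the zero-flux identity of the previous paragraph read in reverse. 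The main obstacle will be the regularity bootstrap needed to promote $\Delta u$ from $L^q_{\mathrm{loc}}$ with $q < 2$ up to $L^2(E\setminus\{a^*\})$, and, intertwined with it, the delicate justification of the decomposition $v = v_1 + cu_0$ in a way that respects the continuity constraint of $\mathcal{D}(\EE^0)$ at $a^*$ and the trace condition on $\partial D_\eps$ without producing spurious boundary contributions.
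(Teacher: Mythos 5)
Your proposal follows essentially the same route as the paper's proof: there, $\mathcal{D}(\EE^b)=\mathcal{D}(\EE^0)$ is written as the linear span of $\FF^0\cup\{u_0\}$ (via \cite[Theorem 7.5.4]{CF}), the generator identity $\EE^b(u,v)=-(f,v)_{m_p}$ is split into the two conditions obtained by testing against $v\in\FF^0$ and against $v=u_0$, and these are identified with $\Delta u\in L^2$ and $\mathcal{N}_p(u)(a^*)=0$ respectively --- which is exactly your decomposition $v=v_1+c\,u_0$, your interior testing, and your cancellation of the $(b\cdot\nabla u,u_0)$ terms. The only real divergence is that the paper does not carry out (or discuss) the elliptic regularity bootstrap you single out as the main obstacle: it reads the equivalence of the weak identity on $\FF^0$ with ``$\Delta u\in L^2$'' directly off the formulation, so your extra regularity step is an attempt to supply detail the paper omits rather than a different method, and your worry about the decomposition at $a^*$ is disposed of in the paper simply by the citation above.
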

\begin{proof}
  $\mathcal{D}(\EE^b)= \mathcal{D}(\EE^0)$   is the linear span of $\FF^0\cup \{u_0\}$ (see, for example, \cite[Theorem 7.5.4]{CF}).
For  $u\in \mathcal{D}(\mathcal{\EE}^b)=\mathcal{D}(\mathcal{\EE}^0)$, $u\in \mathcal{D}(\mathcal{L}^b)$ if and only if   there is some $f\in L^2(E; m_p)$ such that
\begin{equation}\label{eq:2.2}
\EE^b(u, v)=-\int_E f(x)v(x)m_p(dx), \quad \textrm{for every }v\in \mathcal{D}(\mathcal{\EE}^b).
\end{equation}
If this holds, we set $\mathcal{L}^bu=f$.  \eqref{eq:2.2} is equivalent to the conjunction of 
\begin{align}\label{529528}
\frac{1}{2}\int_E\nabla u(x)\cdot \nabla v(x)m_p(dx) -\int_E b(x)\nabla u(x)v(x)m_p(dx)=-\int_E f(x)v(x)m_p(dx) \nonumber
\\ \textrm{for every }v\in \FF^0;
\end{align}
and
\begin{equation}\label{529530}
\int_E\nabla u(x)\cdot \nabla u_0(x)m_p(dx)-\int_E b(x)\nabla u(x)u_0(x)m_p(dx)=-\int_E f(x)u_0(x)m_p(dx).
\end{equation}
\eqref{529528} is equivalent to that  $\Delta u\in L^2(E)$, and \eqref{529530} is equivalent to $\mathcal{N}_p(u)(a^*)=0$.
\end{proof}

\section{Gradient Estimates and Small Time Heat Kernel Estimates}\label{S:3}

Unless otherwise stated, it is always fixed  in this section that  $T>0$ and $b$ is a measurable function in $L^{p_1, p_2}(E)$ with $p_1\in (1, \infty]$ and $p_2\in (2,\infty]$. Without loss generality, we assume throughout the remaining of this paper  that $0<\eps\le 1/4$. To establish two-sided short-time heat kernel estimates for BMVD with drift $X$ via the method of Duhamel's formula, we first  establish the smallness of the gradients of  the transition density   $p(t,x,y)$ of $X^0$. Recall the canonical form $p^0_{0,\alpha}(t,x,y)$ of the heat kernel of $X^0$ has been defined in \eqref{e:1.3}.
\begin{prop}\label{P:3.1}
 There exist   constants   $C_1>0$ and   $\beta_1>0$ such that
\begin{equation}\label{213114}
\left|\nabla_x p(t,x,y)\right|\le \frac{C_1}{\sqrt{t}}\,p^0_{0,\beta_1}(t,x,y),  \quad t\le T,\, (x,y)\in (E\times E)\setminus ((D_\eps \cup \{a^*\})\times (D_\eps\cup \{a^*\})).
\end{equation}

\end{prop}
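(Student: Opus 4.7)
My plan is to derive \eqref{213114} by combining interior parabolic gradient estimates for $p(t,\cdot,y)$ on the smooth parts of $E$ with the short-time upper bounds of Theorem \ref{smalltime}, and to handle the region near the darning point $a^*$ through a hitting-time decomposition of $X^0$. Since the excluded set in the hypothesis forces at least one of $x,y$ to lie in $\IR_+\setminus\{a^*\}$, the geometric scenarios for $x$ (the variable being differentiated) are: $x\in\IR_+\setminus\{a^*\}$, where $\nabla_x$ is the one-dimensional derivative along the pole; and $x\in D_\eps$ with $y\in\IR_+\setminus\{a^*\}$, where $\nabla_x$ is the two-dimensional gradient.

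I would first treat the interior regime in which $x$ is at distance at least $c_0\sqrt{t}$ from $a^*$ (and, if $x\in D_\eps$, also from $\partial D_\eps$). Here a parabolic cylinder $Q$ of spatial radius $\sim\sqrt{t}$ around $(t,x)$ is contained in the smooth Euclidean interior of $E$, on which $(s,z)\mapsto p(s,z,y)$ satisfies the ordinary heat equation in $z$. The classical interior derivative estimate for the heat equation yields $|\nabla_x p(t,x,y)|\lesssim t^{-1/2}\sup_{Q}p(\cdot,\cdot,y)$; bounding the supremum via Theorem \ref{smalltime} and absorbing the comparison of $p^0_{0,\alpha}(s,z,y)$ at points $(s,z)\in Q$ into the Gaussian factor of $p^0_{0,\beta_1}(t,x,y)$ (with $\beta_1$ slightly smaller than $\alpha$) gives the claim.

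The delicate case is when $x$ is close to $a^*$, for instance $x\in\IR_+$ with $|x|<c_0\sqrt{t}$, since then no parabolic cylinder of radius $\sqrt{t}$ around $x$ stays inside the smooth part. Here I would invoke the strong Markov property of $X^0$ at the hitting time $\sigma_{a^*}$. Because the part process of $X^0$ on $\IR_+$ up to $\sigma_{a^*}$ is one-dimensional Brownian motion absorbed at $0$, the density of $\sigma_{a^*}$ under $\IP_x$ is explicit, namely $q_x(s)=\frac{x}{\sqrt{2\pi s^3}}e^{-x^2/(2s)}$, and we have
\begin{equation*}
p(t,x,y)=p^{(0)}(t,x,y)\,\mathbf{1}_{y\in\IR_+}+\int_0^t q_x(s)\,p(t-s,a^*,y)\,ds,
\end{equation*}
where $p^{(0)}$ denotes the one-dimensional Dirichlet heat kernel on $(0,\infty)$. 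Differentiating term by term, the first term has a classical Gaussian derivative bound, while in the second one uses the explicit expression for $\partial_x q_x(s)$ together with Theorem \ref{smalltime} to bound $p(t-s,a^*,y)$. An analogous decomposition, with $q_x$ replaced by the hitting-time density of $\{a^*\}$ from $x\in D_\eps$ (no longer explicit in the 2D case but amenable to estimates of the same Gaussian type), handles the remaining case $x\in D_\eps$ with $|x|_\rho<c_0\sqrt{t}$ and $y\in\IR_+\setminus\{a^*\}$.

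The main obstacle I anticipate is this near-singularity analysis: $\partial_x q_x(s)$ is not uniformly integrable as $s\to 0^+$, and producing an upper bound whose $(x,y,t)$-dependence exactly matches the product structure of $p^0_{0,\beta_1}(t,x,y)$ (for instance the factor $t^{-1}e^{-\beta_1(|x|^2+|y|_\rho^2)/t}$ in the sub-case $y\in D_\eps\cup\{a^*\}$) requires splitting the time integral at $s\sim t/2$, using the Gaussian factor $e^{-x^2/(2s)}$ inside $q_x$ on the small-$s$ side and the decay of $p(t-s,a^*,y)$ on the large-$s$ side, and then invoking a Gaussian absorption inequality of the form $e^{-\alpha|x|^2/s}e^{-\alpha|y|_\rho^2/(t-s)}\lesssim e^{-\beta_1(|x|^2+|y|_\rho^2)/t}$ with $\beta_1<\alpha$. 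The 2D analogue introduces an additional technicality stemming from the less explicit hitting-time density from $D_\eps$, but follows the same overall scheme.
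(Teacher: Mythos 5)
The interior regime of your argument (parabolic interior gradient estimates on a cylinder of radius $\sim\sqrt t$ away from $a^*$ and $\partial D_\eps$, combined with Theorem \ref{smalltime}) is sound, but there is a genuine gap in the near-$a^*$ regime, which is exactly the case your hitting-time decomposition is meant to handle. For $x\in\IR_+$ with $|x|\le c_0\sqrt t$ one has $\partial_x q_x(s)=\frac{1}{\sqrt{2\pi s^3}}e^{-x^2/(2s)}\left(1-\frac{x^2}{s}\right)$, and a direct computation gives $\int_0^{t/2}|\partial_x q_x(s)|\,ds\asymp 1/x$ once $x\ll\sqrt t$ (the factor $e^{-x^2/(2s)}$ gives no help for $s\ge x^2$). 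Since $p(t-s,a^*,y)\asymp t^{-1/2}e^{-c\,y^2/t}$ for $s\le t/2$ by the two-sided bounds of Theorem \ref{smalltime}(i), the scheme you propose -- differentiate under the integral, take absolute values, split at $s\sim t/2$, and absorb Gaussians -- can only produce a bound of order $x^{-1}t^{-1/2}e^{-cy^2/t}$, which exceeds the required $t^{-1}e^{-\beta_1|x-y|^2/t}$ by the unbounded factor $\sqrt t/x$ precisely in the delicate regime $|x|\ll\sqrt t$. The only way to recover the correct order is to exploit the cancellation $\int_0^\infty \partial_x q_x(s)\,ds=0$, e.g.\ by freezing $p(t-s,a^*,y)\approx p(t,a^*,y)$ on $(0,t/2)$ and controlling the remainder via a time-regularity estimate $|p(t-s,a^*,y)-p(t,a^*,y)|\lesssim (s/t)\,t^{-1/2}e^{-cy^2/t}$; such a time-derivative (or time-H\"older) bound for $p(\cdot,a^*,y)$ is an additional input that Theorem \ref{smalltime} does not provide and your sketch does not supply. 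You correctly flag that $\partial_x q_x$ is not uniformly integrable near $s=0$, but the remedy you propose does not cure the problem.

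A second, related gap: for $x\in D_\eps$ near $a^*$ and $y\in\IR_+$, your decomposition requires gradient-in-$x$ estimates (with the right Gaussian form and smallness) for the hitting-time density of the disc $B_\eps$ by planar Brownian motion; asserting that this is ``amenable to estimates of the same Gaussian type'' hides work that is essentially of the same difficulty as the proposition itself near $\partial B_\eps$, and the same small-$s$ cancellation issue recurs there. For comparison, the paper's proof sidesteps both difficulties by passing to the signed radial process $Y=u(X^0)$ of \eqref{848}--\eqref{YsymmSDE}: when at least one of $x,y$ lies on the pole, $p(t,x,y)$ depends on the other variable only through its radial coordinate, so the whole estimate reduces to a one-dimensional gradient bound \eqref{e:3.8} for the kernel of skew Brownian motion perturbed by a Kato-class drift (Zhang's argument), with no hitting-time decomposition or boundary cancellation analysis. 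If you wish to keep your route, you must actually prove (a) the time-regularity estimate for $p(\cdot,a^*,y)$ and (b) the gradient estimate for the hitting density of $B_\eps$, rather than assume them.
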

\begin{proof}
The condition says at least one of $x$ and $y$ is in $\IR_+$.  We consider the signed radial process of $X^0$ defined as follows: Let
\begin{equation}\label{848}
u(x):=\left\{
  \begin{array}{ll}
    -|x|, & \hbox{$x \in \IR_+$;} \\ \\
    0, &\hbox{$x=a^*$;} \\ \\
   |x|_\rho, & \hbox{$x\in D_\eps $.}
  \end{array}
\right.
\end{equation}
We call $Y_t:=u(X^0_t)$ the signed radial process of $X^0$. It has been shown in \cite[Proposition 4.3]{CL} that $Y$ is characterized by the following SDE:
\begin{equation}\label{YsymmSDE}
dY_t=dB_t+\frac{1}{2(Y_t+\eps)}\mathbf{1}_{\{Y_t>0\}}dt+\frac{2\pi\eps-p}{2\pi\eps+p}d\widehat{L}_t^0(Y),
\end{equation}
where $\widehat{L}^0(Y)$ is the symmetric semimartingale local time of $Y$ at $0$. Let  $\eta:=\frac{2\pi\eps-p}{2\pi\eps+p}$ and $Z$ be the skew Brownian motion
$$ dZ_t = dB_t + \eta \widehat{L}^0_t (Z) ,
$$
where $\widehat{L}^0_t (Z)$ is the symmetric local time of $Z$ at $0$.
The diffusion process $Y$ can be obtained from $Z$ through a drift perturbation
(i.e. Girsanov transform).
The transition density function $p^Z(t, x, y)$ of $Z$ is explicitly known and
enjoys the two-sided Aronson-type Gaussian estimates;
see, e.g., \cite{RY}. One can further verify  that for some constants $c_1, c_2>0$, it holds
$$ | \nabla_x p^Z(t, x, y)| \leq c_1  t^{-1} \exp (-c_2 |x-y|^2/t).
$$
Set $\displaystyle{f(x):=\frac{1}{x+\eps}\mathbf{1}_{\{x>0\}}}$. It is not hard to see that $f\in K_{1}$:
\begin{align*}
 \sup_{x\in \IR}\int_{|y-x|\le 1}|f(y)|dy \le\sup_{x>-1}\int_{0\vee (x-1)}^{x+1}\frac{1}{y+\epsilon}dy<\infty.
\end{align*}
By using the same argument as that for Theorem A(b) in Zhang \cite[\S 4]{Z1}, we can derive that for some $c_3, c_4>0$, 
\begin{equation}\label{e:3.8}
p^Y(t,x,y)\lesssim t^{-1/2}e^{-c_3(x-y)^2/t}, \quad  \left|\frac{\partial}{\partial x}p^Y(t,x,y)\right|\lesssim t^{-1}e^{-c_4(x-y)^2/t},\quad x, y\in \IR,
\end{equation}
where $p^Y(t,x,y)$ denotes the transition density function of $Y$. In the following, we divide our discussion into three cases depending on the positions of $x, y$.
\\
{\it Case 1.} $x, y\in \IR_+$. In this case, since $p(t,x,y)=p^Y(t,-x,-y)$,
we immediately have
\begin{equation*}
\left|\frac{\partial }{\partial x}p(t,x,y)\right|=\left|\frac{\partial}{\partial x}p^Y(t,-x,-y)\right|\lesssim t^{-1}e^{-c_4(x-y)^2/t}. 
\end{equation*}
{\it Case 2. } $x\in D_\eps\cup \{a^*\}$, $y\in \IR_+$. In this case, we have  for any $0< a<b$,
\begin{align*}
\int_a^bp(t,x,y)dy&=\IP_x[X_t\in \IR_+ \text{ with }a\le X_t<b]=\IP_{|x|_\rho}[-b \le Y_t\le -a]
\\
&=\int_{-b}^{-a}p^Y(t, |x|_\rho, u)du \stackrel{y=-u}{=}\int_a^b p^Y(t, |x|_\rho, -y)dy.
\end{align*}
This implies $p(t,x,y)=p^Y(t, |x|_\rho, -y)$, for $x\in D_\eps,\, y\in \IR_+\cup \{a^*\}$, which only depends on $|x|$, for fixed $t$ and $y$.  For a  function $f$ on $\IR^2$ that is rotationally invariant,  i.e., satisfying $f(x)=g(r)$ for some function $g$ on $\IR$ and for all $|x|=r$, it holds that $\nabla_x f(x)=g'(r)\cdot \frac{x}{|x|}$. Thus $|\nabla_x f(x)|=|g'(r)|$. Hence, 
\begin{equation*}
|\nabla_x p(t,x,y)|=\left|\frac{\partial }{\partial r}p^Y(t,r, -y)\Big|_{r=|x|_\rho}\right|\stackrel{\eqref{e:3.8}}{\lesssim} t^{-1}e^{-c_4(|x|_\rho+y)^2/t}, \quad x\in D_\eps\cup \{a^*\} , y\in \IR_+.
\end{equation*}
{\it Case 3.}  $x\in \IR_+$, $y\in D_\eps\cup \{a^*\}$.  Set $\wt {p}(t,x,r):=p(t,x,y)$ for $r=|y|_\rho$.  For any $a>b\ge0$,
\begin{align*}
\int_a^b p^{(Y)}(t,-x,y)dy&=  \IP_{-x} \left[ a\leq Y_t\leq b\right] = \IP_x \left[ X_t \in {D_\eps} \hbox{ with }  a\leq |X_t |_\rho\leq b\right] \\
&=  \int_{y\in {D_\eps}, \; a \leq |y|_\rho \leq b}p (t,x,y)m_p(dy) =\int_{y\in {D_\eps},\;  a+\eps \leq |y| \leq b+\eps}p (t,x,y)m_p(dy)
\\
&=\int_{a}^b 2\pi (r+\eps)\wt {p}(t,x,r)dr.
\end{align*}
This  implies when $x\in \IR_+$, $y\in {D_\eps}\cup \{a^*\}$,
\begin{equation}
p^{(Y)}(t,-x,|y|_\rho) =2\pi ( |y|_\rho +\eps) \wt {p} (t,x, |y|_\rho ) =
2\pi ( |y|_\rho +\eps)p (t,x,y) .
\end{equation}
From here we can derive that for $x\in \IR_+\cup \{a^*\}$, $y\in D_\eps$, 
\begin{align*}
\left|\frac{\partial }{\partial x}p(t,x,y)\right|&=\frac{1}{2\pi \left(|y|_\rho+\eps\right)}\left|\frac{\partial }{\partial x}p^{(Y)}(t, -x, |y|_\rho)\right|=\frac{1}{2\pi |y|}\left|\frac{\partial }{\partial x}p^{(Y)}(t, -x, |y|_\rho)\right|
\\
&\stackrel{\eqref{e:3.8}}{\lesssim }\frac{1}{2\pi |y|} t^{-1}e^{-c_4\left(x^2+|y|^2_\rho\right)/t}\lesssim t^{-1}e^{-c_4\left(x^2+|y|^2_\rho\right)/t}.
\end{align*}
Combining all the three cases above, by \eqref{e:1.3},  we conclude that   there exists some $\beta_1>0$ such that
\begin{equation*}
\left|\nabla_x p(t,x,y)\right|\lesssim \frac{1}{\sqrt{t}}\,p^0_{0,\beta_1}(t,x,y) \quad \quad t\le T,\, (x,y)\in (E\times E)\setminus ((D_\eps \cup \{a^*\})\times (D_\eps\cup \{a^*\})).
\end{equation*} 
\end{proof}
\begin{rmk}\label{R:3.2}
From the proof to Proposition \ref{P:3.1} Case 2, it can be told that the same conclusion holds for  $x\in D_\eps\cup \{a^*\}$, $y=a^*$.
\end{rmk}
\begin{prop}\label{P:3.2}
 There exist constants $\beta_2>0$  and $C_2, C_3>0$ such that 
\begin{description}
\item{\rm(i)} If $x,y\in  D_\eps\cup \{a^*\}$ with $\max\{|x|_\rho, |y|_\rho\}\ge 1$, then
\begin{equation*}
\left|\nabla_x p(t,x,y)\right|\le \frac{C_2}{\sqrt{t}}\,p^0_{0,\beta_2}(t,x,y), \quad t\le T
\end{equation*}
\item{\rm(ii)} If $x,y\in D_\eps\cup \{a^*\}$ with $\max\{|x|_\rho, |y|_\rho\}< 1$, then
\begin{equation*}
|\nabla_x p(t,x,y)|
\le \frac{C_3}{t}e^{-\beta_2(|x|_\rho^2+|y|_\rho^2)/t}+\frac{C_3}{t^{3/2}}\left(1\wedge \frac{|y|_\rho}{\sqrt{t}}\right)e^{-\beta_2 |x-y|^2/t}, \quad t\le T. 
\end{equation*}
\end{description}

\end{prop}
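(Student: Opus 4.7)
My plan is to decompose
\[
p(t,x,y) = p^{D_\eps}(t,x,y) + \bar p_{D_\eps}(t,x,y),
\]
where $p^{D_\eps}$ is the transition density of $X^0$ killed upon hitting $a^*$ (equivalently, the Dirichlet heat kernel of $\tfrac12\Delta$ on the $C^{1,1}$ exterior domain $D_\eps = \IR^2\setminus \bar B_\eps$) and $\bar p_{D_\eps}(t,x,y) = \IE_x[p(t-\sigma_{a^*},a^*,y);\sigma_{a^*}\le t]$ is the contribution from paths visiting $a^*$, and to bound the two pieces separately. For the first, the part process in $D_\eps$ coincides with planar Brownian motion killed on $\partial B_\eps$, so the standard gradient estimate for the Dirichlet heat kernel on a $C^{1,1}$ domain gives
\[
|\nabla_x p^{D_\eps}(t,x,y)| \lesssim \frac{1}{t^{3/2}}\Bigl(1\wedge\frac{|y|_\rho}{\sqrt t}\Bigr)e^{-c|x-y|^2/t},\qquad t\le T,
\]
which is precisely the second summand in (ii); since $1\wedge |y|_\rho/\sqrt t\le 1$, it also yields the contribution needed in (i).

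For $\bar p_{D_\eps}$ I exploit rotational invariance. Every contributing path visits $a^*$, after which $X^0$ restarts in a rotationally symmetric manner, so both the law of $\sigma_{a^*}$ under $\IP_x$ and the density $p(s,a^*,\cdot)$ depend on $x$ and $y$ only through $|x|_\rho$ and $|y|_\rho$. A polar-coordinate computation analogous to Cases~2--3 of the proof of Proposition \ref{P:3.1} then gives
\[
\bar p_{D_\eps}(t,x,y) = \frac{1}{2\pi(|y|_\rho+\eps)}\,\bar p^Y(t,|x|_\rho,|y|_\rho),\qquad \bar p^Y := p^Y-p^Y_{(0,\infty)},
\]
where $p^Y$ and $p^Y_{(0,\infty)}$ are the full and the $0$-killed transition densities of the signed radial process $Y$ of \eqref{YsymmSDE}. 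Consequently the two-dimensional gradient reduces to a one-dimensional radial derivative, and bounding the factor $(|y|_\rho+\eps)^{-1}\le\eps^{-1}$ leaves the one-dimensional estimate
\[
|\partial_r \bar p^Y(t,r,r')| \lesssim \frac{1}{t}\,e^{-c(r+r')^2/t},\qquad 0<r,r'<1,\ t\le T,
\]
which, combined with $(r+r')^2\ge r^2+r'^2$, produces the first summand in (ii).

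The main obstacle is this one-dimensional inequality: a crude triangle-inequality bound via \eqref{e:3.8} together with the analogous estimate for $p^Y_{(0,\infty)}$ only yields decay $e^{-c(r-r')^2/t}$, so one must exploit the cancellation between $p^Y$ and $p^Y_{(0,\infty)}$. My plan is to compare $Y$ with the skew Brownian motion $Z$ of \eqref{YsymmSDE}, for which the reflection identity on the half-line produces the explicit formula $\bar p^Z(t,r,r')=\frac{2\beta}{\sqrt{2\pi t}}e^{-(r+r')^2/(2t)}$ with $\beta=(1+\eta)/2$; differentiating and using the elementary bound $ue^{-u^2/2}\le e^{-u^2/4}$ yields $|\partial_r \bar p^Z(t,r,r')|\lesssim t^{-1}e^{-c(r+r')^2/t}$. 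This estimate transfers to $\bar p^Y$ through the Girsanov-type Radon--Nikodym comparison that links $Y$ to $Z$ already invoked in the proof of Proposition \ref{P:3.1}, using that the drift $\frac{1}{2(r+\eps)}\mathbf 1_{\{r>0\}}\in K_1$ is bounded and its contribution to the derivative can be controlled via a Bismut-type formula producing only lower-order corrections. Finally, case (i) follows by absorbing the bound $t^{-1}e^{-c(|x|_\rho^2+|y|_\rho^2)/t}$ coming from $\bar p_{D_\eps}$ into $t^{-3/2}e^{-\beta_2|x-y|^2/t}$ after possibly decreasing $\beta_2$: either $|x-y|$ is comparable to $|x|_\rho+|y|_\rho$ so the Gaussian exponents match, or $|x-y|$ is small, in which case the hypothesis $\max\{|x|_\rho,|y|_\rho\}\ge 1$ forces $|x|_\rho^2+|y|_\rho^2\ge 1$ and produces an $e^{-c/t}$ factor that dominates any polynomial prefactor.
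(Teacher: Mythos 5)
Your decomposition $p=p_{D_\eps}+\bar p_{D_\eps}$ and the treatment of the killed part via the Dirichlet heat kernel gradient bound on the $C^{1,1}$ exterior domain are exactly what the paper does, and your radial identity $\bar p_{D_\eps}(t,x,y)=\frac{1}{2\pi(|y|_\rho+\eps)}\bar p^Y(t,|x|_\rho,|y|_\rho)$ is correct. The problem is the step you yourself flag as the main obstacle: the bound $|\partial_r \bar p^Y(t,r,r')|\lesssim t^{-1}e^{-c(r+r')^2/t}$ is never proved. The explicit formula for $\bar p^Z$ of skew Brownian motion is fine, but the transfer to $Y$ is asserted through a ``Girsanov-type comparison'' and a ``Bismut-type formula producing only lower-order corrections,'' and this is precisely where all the difficulty sits: in a Duhamel expansion the drift $\frac{1}{2(r+\eps)}\mathbf{1}_{\{r>0\}}$ perturbs both $p^Y$ and $p^Y_{(0,\infty)}$, so $\partial_r\bar p^Y$ satisfies an equation involving $\partial_z\bar p^Y$ at intermediate times (a self-referential bound requiring a convolution/bootstrap argument of the same kind the paper carries out in its Propositions 3.3--3.13), and one must check that every correction term retains the $(r+r')$-decay rather than only $(r-r')$-decay; moreover a Bismut gradient formula for a diffusion whose generator involves a local-time (skew) term at $0$ is not an off-the-shelf tool. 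As written, the first summand in (ii) — the heart of the new estimate — rests on an unproven claim. (A smaller issue: you state the one-dimensional bound only for $0<r,r'<1$ but then invoke its consequence in case (i), where $\max\{|x|_\rho,|y|_\rho\}\ge 1$.)

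For comparison, the paper avoids needing any cancellation estimate for $\partial_r\bar p^Y$. It first uses the $m_p$-symmetry of $X^0$ to write $\bar p_{D_\eps}(t,x,y)=\bar p_{D_\eps}(t,y,x)=\int_0^t \IP_y(\sigma_{a^*}\in ds)\,p(t-s,a^*,x)$, so that $x$ appears only as the spatial argument of $p(\cdot,a^*,\cdot)$; the gradient in $x$ can then be bounded by the already-established estimate $|\nabla_x p(u,a^*,x)|\lesssim u^{-1}e^{-c|x|_\rho^2/u}$ (Proposition \ref{P:3.1}, Case 2 and Remark \ref{R:3.2}), and the remaining factor $\int_0^t\frac{1}{t-s}\IP_y(\sigma_{a^*}\in ds)$ is controlled by $\IE_y\big[p_2(t-\tau_{D_\eps},W_{\tau_{D_\eps}},W_{\tau_{D_\eps}});\tau_{D_\eps}<t\big]\lesssim t^{-1}e^{-\delta_{D_\eps}(y)^2/(2t)}$, which supplies the $e^{-c|y|_\rho^2/t}$ decay. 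In other words, the $|x|_\rho$-decay comes from the gradient at $a^*$ and the $|y|_\rho$-decay from the hitting-time integral, with no need to differentiate the through-zero kernel in its starting variable. If you replace your skew-BM/Bismut transfer by this symmetry-plus-hitting-time argument (which fits naturally into your framework, since you already have Remark \ref{R:3.2} available), your proof closes; your absorption argument for case (i) is then fine, since $|x-y|\le|x|_\rho+|y|_\rho+2\eps\lesssim|x|_\rho+|y|_\rho$ when $\max\{|x|_\rho,|y|_\rho\}\ge 1$ and $t^{-1}\lesssim t^{-3/2}$ on $(0,T]$.
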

\begin{proof}
First of all, for both cases we have
\begin{equation}\label{115202}
\nabla_xp(t,x,y)=\nabla_xp_{D_\eps}(t,x,y)+\nabla_x\bar{p}_{D_\eps}(t,x,y).
\end{equation}
In this proof, we temporarily denote by $W$    a
  two-dimensional Brownian motion and $p_2(t, x, y)=(2\pi t)^{-1} \exp (-|x-y|^2 /2t)$ its transition density. Started from $x\in D_\eps$, $X^0$ has the same distribution as  $W$  before hitting $a^*$. By \cite[Theorem 1.1]{CKP}, there exists some $c_1>0$ such that
\begin{equation}\label{116209}
\left|\nabla_x p_{D_\eps}(t,x,y)\right|= \left|\nabla_x p_{2, D_\eps}(t,x,y)\right|  \lesssim t^{-3/2}\left(1\wedge \frac{|y|_\rho}{\sqrt{t}}\right)e^{-c_1 |x-y|^2/t}.
\end{equation}
 In the following, by abusing the notation a little bit, we let $\IP_x$ stand for the distribution of both $X^0$ and $W$   started from $x\in D_\eps$   before exiting $D_\eps$.  For the second term on the right hand side of \eqref{115202}, first by the symmetry of $X^0$ as well as its part process, 
 \begin{align}\label{e:3.11}
 \bar{p}_{D_\eps}(t,x,y)&=p(t,x,y)-p_{D_\eps}(t,x,y)=p(t,y,x)-p_{D_\eps}(t,y,x)= \bar{p}_{D_\eps}(t,y,x).
 \end{align}
Owing to Remark \ref{R:3.2}, it   follows   that for some $c_2>0$, it holds 
\begin{align}
|\nabla_x\bar{p}_{D_\eps}(t,x,y)|&\stackrel{\eqref{e:3.11}}{=}|\nabla_x\bar{p}_{D_\eps}(t,y,x)|\le \int_0^t \IP_y\left(\sigma_{a^*}\in ds\right)|\nabla_xp(t-s, a^*,x)| \nonumber
\\
&\lesssim \int_0^t \IP_y\left(\sigma_{a^*}\in ds\right)\frac{1}{t-s}e^{-c_2 |x|_\rho^2/(t-s)}\nonumber
\\
\nonumber &\lesssim e^{-c_2 |x|^2_\rho/t}\int_0^t \frac{1}{t-s} \IP_y(\sigma_{a^*}\in ds)\\
%\nonumber &\lesssim e^{-c_2 |x|^2_\rho/t} \int_0^t \frac{1}{t-s} \IP_{y} (\sigma_{B_\eps}\in ds)
%\\
&=e^{-c_2 |x|^2_\rho/t} \cdot \IE_{y} \left[p_2\left(t-\tau_{D_\eps}, W_{\tau_{D_\eps}}, W_{\tau_{D_\eps}}\right), \tau_{D_\eps}<t \right] \nonumber
 \\
 &\lesssim e^{-c_2 |x|^2_\rho/t} \cdot t^{-1}e^{-\delta_{D_\eps}(y)^2/(2t)}\lesssim t^{-1}e^{-(c_2\wedge 1)(|x|_\rho^2+|y|^2_\rho)/(2t)} .\label{108}
\end{align}
Therefore, in view of \eqref{115202}, combining \eqref{116209} and \eqref{108} shows   for both Case (i) and (ii)    that :
\begin{align}\label{e:3.12}
|\nabla_x p(t,x,y)|\lesssim t^{-1}e^{-c_2(|x|_\rho^2+|y|^2_\rho)/t}+t^{-3/2}\left(1\wedge \frac{|y|_\rho}{\sqrt{t}}\right)e^{-c_1 |x-y|^2/t}.
\end{align}
Thus the desired conclusion for  Case (ii) has been proved if we take $\beta_2=c_1\wedge c_2$ in the statement of this proposition. To verify the conclusion for Case (i), we observe that when $\max\{|x|_\rho, |y|_\rho\}>1$,
\begin{equation}\label{e:3.14r}
|x|_\rho+|y|_\rho\ge \max\{|x|_\rho, |y|_\rho\}>1.
\end{equation}
This yields that when $\max\{|x|_\rho, |y|_\rho\}>1$, 
$$
|x|_\rho+|y|_\rho=|x|+|y|-2\eps\ge |x-y|-2\eps \ge \frac{1}{3}|x-y|,\quad \text{if } |x-y|\ge 3\eps;
$$
while when $|x-y|<3\eps\le 3/4$ (since it is assumed $\eps\le 1/4$),
\begin{eqnarray*}
|x|_\rho+|y|_\rho \stackrel{\eqref{e:3.14r}}{>} 1 >\frac{2}{3}|x-y|.
\end{eqnarray*}
Therefore, when $\max\{|x|_\rho, |y|_\rho\}>1$, it always holds $|x-y|^2/18\le |x|^2_\rho+|y|^2_\rho$. Thus
\begin{align*}
|\nabla_x p(t,x,y)|& \lesssim t^{-1}e^{-c_2(|x|_\rho^2+|y|^2_\rho)/t}+t^{-3/2}\left(1\wedge \frac{|y|_\rho}{\sqrt{t}}\right)e^{-c_1 |x-y|^2/t}
\\
&\lesssim t^{-1}e^{-c_2|x-y|^2/(18t)}+t^{-3/2}e^{-c_1 |x-y|^2/t}  \lesssim t^{-3/2}e^{-c_3 |x-y|^2/t}.
\end{align*}
 In view of  Theorem \ref{smalltime},  we have arrived at the conclusion for Case (i)  by selecting $\beta_2=c_3$ in the statement of this proposition. Therefore, the proof to this proposition is complete by choosing $\beta_2=\min_{1\le i\le 3} c_i$.
\end{proof}
\begin{prop}\label{121051}
It holds for every pair of $\alpha, \beta$: $0<\alpha<\beta$ that
\begin{equation}\label{1171130}
\int_0^t \frac{1}{\sqrt{s}}\int_{E}p^0_{0,\alpha}(t-s, x,z)|b(z)|p^0_{0,\beta}(s, z,y)m_p(dz)ds \le C_4(t)p^0_{0,\alpha}(t,x,y), \,  0<s<t\le T,\, x,y \in \IR_+,
\end{equation}
where $C_4(t)$ is a positive non-decreasing function in $t$ \emph{(}possibly depending on $\alpha$ and $\beta$\emph{)}  such that  $C_4(t)\rightarrow 0$ as $t \rightarrow 0$. In particular, there exists some $\alpha_1>0$ such that for every  $\alpha<\alpha_1$, it holds
\begin{align}
\int_0^t \int_{z\in E}p^0_{0,\alpha}(t-s,x,z)|b(z)|\left|\nabla_z p(s, z, y )\right| m_p(dz)ds&\le C_5(t)p^0_{0,\alpha}(t,x,y), \nonumber
\\
& 0<s<t\le T, x,y\in \IR_+,\label{e:3.14}
\end{align}
where $C_5(t)$ is a positive non-decreasing function in $t$ \emph{(}possibly depending on $\alpha$\emph{)} such  that $C_5(t) \rightarrow 0$ as $t\rightarrow 0$.
\end{prop}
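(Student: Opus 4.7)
The plan is to prove (i) directly and then derive (ii) from (i) via the gradient bounds already established in this section. For (ii), the key observation is that $y\in\IR_+$, so for every $z\in E$ the pair $(z,y)$ has at least one component in $\IR_+$, placing us in the scope of Proposition \ref{P:3.1} (with Remark \ref{R:3.2} covering the point $z=a^*$). This yields $|\nabla_z p(s,z,y)|\lesssim s^{-1/2}\,p^0_{0,\beta_1}(s,z,y)$ uniformly in $z$, which when substituted into the left-hand side of \eqref{e:3.14} produces precisely the left-hand side of \eqref{1171130} with $\beta=\beta_1$. Setting $\alpha_1:=\beta_1$ closes the reduction, since (i) is stated for every $0<\alpha<\beta$ so every $\alpha<\alpha_1$ is admissible.

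For (i) the target factor is $p^0_{0,\alpha}(t,x,y)=t^{-1/2}e^{-\alpha|x-y|^2/t}$. I would split the $z$-integral into the one-dimensional piece $z\in\IR_+$ and the two-dimensional piece $z\in D_\eps\cup\{a^*\}$ and treat them separately. On each piece the first step is to absorb the $(x,y)$-dependence into $e^{-\alpha|x-y|^2/t}$ via the standard Chapman--Kolmogorov exponent inequality $\tfrac{A^2}{t-s}+\tfrac{B^2}{s}\ge\tfrac{(A+B)^2}{t}$ together with $\alpha<\beta$: on $\IR_+$ it is applied to $|x-z|^2$ and $|z-y|^2$ directly; on $D_\eps\cup\{a^*\}$ it is applied to $|x|^2$ and $|y|^2$ after using $|x-y|\le|x|+|y|$ (valid since $x,y\in\IR_+$), while the two $|z|_\rho^2$ terms from the kernels combine into a single Gaussian in $z$ with coefficient $\alpha/(t-s)+\beta/s$. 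H\"older in $z$ with conjugate exponent pair $(p_i,q_i)$, $q_i=p_i/(p_i-1)$, then pulls $b$ out at the cost of a Gaussian integral raised to the power $1/q_i$: the one-dimensional Gaussian yields a factor of order $s^{1/(2q_1)}$ and the two-dimensional identity $\int_{\IR^2}e^{-c|z|^2}dz=\pi/c$ yields a factor of order $\bigl[s(t-s)/(\alpha s+\beta(t-s))\bigr]^{1/q_2}$. What remains in each case is a Beta-type $s$-integral producing an explicit power of $t$.

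Assembling, the half-line contribution is bounded by a multiple of $t^{1/(2q_1)-1/2}e^{-\alpha|x-y|^2/t}$ and the two-dimensional contribution by a multiple of $t^{1/q_2-1/2}e^{-\alpha|x-y|^2/t}$, so after normalizing by $p^0_{0,\alpha}(t,x,y)$ the function $C_4(t)$ comes out as a positive combination of $t^{1/(2q_1)}$ and $t^{1/q_2}$. Both exponents are strictly positive precisely under the hypotheses $p_1>1$ (so $q_1<\infty$) and $p_2>2$ (so $q_2<2$, hence $1/q_2>1/2$), which is exactly what forces $C_4(t)\downarrow 0$ as $t\downarrow 0$. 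The main obstacle is the two-dimensional piece, where the interplay between the $(t-s)^{-1/2}s^{-1}$ temporal singularity and the Gaussian volume $[\alpha/(t-s)+\beta/s]^{-1/q_2}$ is delicate and where the threshold $p_2>2$ is used sharply; the half-line piece is routine by comparison.
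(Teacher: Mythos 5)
Your overall architecture is the same as the paper's: split $z$ into $\IR_+$ and $D_\eps\cup\{a^*\}$, use the Chapman--Kolmogorov exponent inequality together with $\alpha<\beta$ to extract $e^{-\alpha|x-y|^2/t}$, apply H\"older in $z$ against $b$, and deduce \eqref{e:3.14} from \eqref{1171130} via Proposition \ref{P:3.1} (with $\alpha_1=\beta_1$), which works since $y\in\IR_+$ puts every pair $(z,y)$ in the scope of that proposition. The reduction and the one-dimensional piece (factor $s^{1/(2q_1)}$, hence $t^{1/(2q_1)}$ in $C_4$) are correct and match the paper.

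The two-dimensional piece, however, contains a genuine error. For $z\in D_\eps\cup\{a^*\}$ the kernels carry $e^{-c|z|_\rho^2}$ with $|z|_\rho=|z|-\eps$ the distance to the \emph{circle} $\partial B_\eps$, not to a point, so the sublevel set $\{|z|_\rho\le\delta\}$ is an annulus of area $\asymp \eps\delta+\delta^2$. Consequently, in polar coordinates $m_p(dz)=2\pi(r+\eps)\,dr$ with $r=|z|_\rho$, one has $\int_{D_\eps}e^{-q_2c|z|_\rho^2}m_p(dz)\asymp c^{-1/2}+c^{-1}$, and for $0<s<t\le T$ (where $c=\alpha/(t-s)+\beta/s$ is bounded below) the dominant term is $c^{-1/2}$; this is exactly the paper's computation \eqref{e:3.15}. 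Your use of $\int_{\IR^2}e^{-c|z|^2}dz=\pi/c$ therefore understates the H\"older factor: it is $\asymp\bigl(s\wedge(t-s)\bigr)^{1/(2q_2)}$, not $\bigl(s\wedge(t-s)\bigr)^{1/q_2}$, and the intermediate bound you assert is false. Indeed your claimed final estimate $\lesssim t^{1/q_2-1/2}e^{-\alpha|x-y|^2/t}$ fails: taking $b=\mathbf{1}_{\{z\in D_\eps:\,|z|_\rho\le 1\}}$ and $x=y$ close to the origin of $\IR_+$, the left-hand side of \eqref{1171130} restricted to this region is $\asymp\int_0^t s^{-1}(t-s)^{-1/2}\bigl(s\wedge(t-s)\bigr)^{1/2}ds\asymp 1 \asymp t^{1/2}p^0_{0,\alpha}(t,x,y)$, which exceeds $t^{1/q_2}p^0_{0,\alpha}(t,x,y)$ for small $t$ since $1/q_2>1/2$. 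With the correct annular volume the same scheme goes through and yields the exponent $t^{1/(2q_2)}$ (as in the paper), which still tends to $0$, so the proposition is unharmed --- but the step as you wrote it is not valid. A side effect of the same miscount: this case needs only $q_2<\infty$ (i.e.\ $p_2>1$), not $p_2>2$ ``sharply''; the threshold $p_2>2$ is genuinely needed only in the later cases where both kernels carry $1/t$ prefactors.
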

\begin{proof}
Without loss of generality, we assume $b\ge 0$. In view of Proposition \ref{P:3.1}, 
We  divide the computation into two parts depending on the position of $z$. Recall $b=b_1+b_2$ where $b_1:=b|_{\IR_+}\in L^{p_1}(\IR_+)$, and $b_2:=b|_{D_\eps}\in L^{p_2}(D_\eps)$. 
\\
{\it Case 1.}  $z\in \IR_+$. We first record the following triangle-inequality for distance function which can be verified using elementary mathematics:
\begin{equation}\label{triangle-v1}
\frac{|x-z|^2}{t-s}+\frac{|z-y|^2}{s}\ge \frac{|x-y|^2}{t},\quad x,y,z\in \IR.
\end{equation}
By H\"{o}lder's inequality, for $p_1,q_1>0$ satisfying $1/p_1+1/q_1=1$, it holds that 
\begin{eqnarray}
&& \int_{0}^t \frac{1}{\sqrt{s}}\int_{\IR_+}p^0_{0,\alpha} (t-s, x,z)b(z)p^0_{0,\beta}(s, z,y)m_p(dz)ds \nonumber
\\
&=& \int_0^t \frac{1}{\sqrt{s}}\int_{\IR_+}\frac{1}{\sqrt{t-s}}e^{-\alpha|x-z|^2/(t-s)}b(z)\frac{1}{\sqrt{s}}e^{-\beta|z-y|^2/s}m_p(dz)ds \nonumber
\\
&\stackrel{\eqref{triangle-v1}}{\le}& e^{-\alpha |x-y|^2/t}\int_0^t s^{-1+1/(2q_1)}(t-s)^{-1/2}\int_{\IR_+}b(z)s^{-1/(2q_1)}e^{-(\beta-\alpha)|z-y|^2/s}m_p(dz)ds \nonumber
\\
&\le& e^{-\alpha |x-y|^2/t}\int_0^t s^{-1+1/(2q_1)}(t-s)^{-1/2} \|b_{
1}\|_{p_1} \left(\int_{\IR_+}\left(s^{-1/(2q_1)}e^{-(\beta-\alpha)|z-y|^2/s}\right)^{q_1} m_p(dz)\right)^{1/q_1}ds \nonumber
\\
&\lesssim& e^{-\alpha |x-y|^2/t}\int_0^t s^{-1+1/(2q_1)}(t-s)^{-1/2} \|b_{
1}\|_{p_1}ds \nonumber
\\
&\lesssim& \|b_1\|_{p_1} e^{-\alpha |x-y|^2/t}\cdot \frac{t^{1/(2q_1)}}{\sqrt{t}}=\|b_1\|_{p_1}t^{1/(2q_1)}p^0_{0, \alpha}(t,x,y),\label{1224}
\end{eqnarray}
where  the second last $``\lesssim"$ in \eqref{124} follows from
\begin{eqnarray*}
&&\int_{\IR_+}\left(s^{-1/(2q_1)}e^{-(\beta-\alpha)|z-y|^2/s}\right)^{q_1} m_p(dz)
\\
&=&\int_{\IR_+}s^{-1/2}e^{-(\beta-\alpha)q_1|z-y|^2/s} \,dz=\int_{\IR_+}s^{-1/2}e^{-(\beta-\alpha)q_1|z|^2/s} \,dz =\frac{1}{2}\sqrt{\frac{\pi }{(\beta -\alpha )q_1}}.
\end{eqnarray*}
{\it Case 2. } $z\in D_\eps \cup \{a^*\}$.    We first note that 
\begin{equation}\label{e:3.16}
xe^{-x^2}\le e^{-x^2/2}, \quad \text{for all }x\in \IR.
\end{equation}
  For $q_2$ satisfying $1/p_2+1/q_2=1$, we first record the following computation which will be used later. 
 \begin{eqnarray}
&& \int_{D_\eps}s^{-1/2}e^{-\beta q_2|z|_\rho^2/s} m_p(dz) \stackrel{r=|z|_\rho}{=} \int_0^\infty 
\frac{r+\eps}{\sqrt{s}}e^{-\beta q_2 r^2/s}p\,dr \nonumber
\\
&\asymp & \int_0^\infty 
\frac{1}{\sqrt{s}}e^{-\beta q_2 r^2/s}dr+\int_0^\infty 
\frac{r}{\sqrt{s}}e^{-\beta q_2 r^2/s}dr \nonumber
 \\
 &=&\frac{1}{2}\sqrt{\frac{\pi }{\beta q_2}}+\frac{1}{\sqrt{\beta q_2}}\int_0^\infty 
\sqrt{\frac{\beta q_2 r^2}{s}}e^{-\beta q_2 r^2/s}dr \nonumber
\\
&\stackrel{\eqref{e:3.16}}{\le}& \frac{1}{2}\sqrt{\frac{\pi }{\beta q_2}}+\frac{1}{\sqrt{\beta q_2}}\int_0^\infty  e^{-\beta q_2 r^2/2s}dr =\frac{1}{2}\sqrt{\frac{\pi }{\beta q_2}}+\frac{1}{\sqrt{\beta q_2}} \sqrt{\frac{\pi s }{2\beta q_2}}. \label{e:3.15}
 \end{eqnarray}
 On account of H\"{o}lder's  inequality, it holds for $0<\alpha<\beta$ that
\begin{eqnarray}
&& \int_{0}^t \frac{1}{\sqrt{s}}\int_{D_\eps}p^0_{0,\alpha} (t-s, x,z)b(z)p^0_{0,\beta}(s, z,y)m_p(dz)ds \nonumber
\\
&=& \int_0^t \frac{1}{\sqrt{s}}\int_{D_\eps}\frac{1}{\sqrt{t-s}}e^{-\alpha(|x|^2+|z|_\rho^2)/(t-s)}b(z)\frac{1}{\sqrt{s}}e^{-\beta(|y|^2+|z|_\rho^2)/s}m_p(dz)ds \nonumber
\\
&\le &  e^{-\alpha (|x|^2+|y|^2)/t}\int_0^t s^{-1+1/(2q_2)}(t-s)^{-1/2} \int_{D_\eps}b(z)s^{-1/(2q_2)}e^{-\beta|z|_\rho^2/s}m_p(dz)ds \nonumber
\\
&\lesssim & e^{-\alpha |x-y|^2/t}\int_0^t s^{-1+1/(2q_2)}(t-s)^{-1/2} \|b_{2}\|_{p_2} \left(\int_{D_\eps}\left(s^{-1/(2q_2)}e^{-\beta|z|_\rho^2/s}\right)^{q_2} m_p(dz)\right)^{1/q_2}ds \nonumber
\\
&\le& e^{-\alpha |x-y|^2/t}\int_0^t s^{-1+1/(2q_2)}(t-s)^{-1/2} \|b_{2}\|_{p_2} \left(\int_{D_\eps}s^{-1/2}e^{-\beta q_2|z|_\rho^2/s} m_p(dz)\right)^{1/q_2}ds \nonumber
\\
&\stackrel{\eqref{e:3.15}}{\lesssim}& e^{-\alpha |x-y|^2/t}\int_0^t s^{-1+1/(2q_2)}(t-s)^{-1/2} \|b_{2}\|_{p_2} \left(\frac{1}{2}\sqrt{\frac{\pi }{\beta q_2}}+\frac{1}{\sqrt{\beta q_2}} \sqrt{\frac{\pi s }{2\beta q_2}}\right)^{1/q_2}ds \nonumber
\\
&\stackrel{s\le T}{\lesssim}& \|b_2\|_{p_2} e^{-\alpha |x-y|^2/t}\cdot \frac{t^{1/(2q_2)}}{\sqrt{t}}\lesssim \|b_2\|_{p_2}t^{1/(2q_2)}p^0_{0, \alpha}(t,x,y).\label{1225}
\end{eqnarray}
Adding up  \eqref{1224} and \eqref{1225} shows that for each pair of $0<\alpha<\beta$, it holds 
\begin{align*}
 \int_{0}^t\int_{E}p^0_{0,\alpha} (t-s, x,z)& b(z)  p^0_{0, \beta}(s, z,y)m_p(dz)ds  \lesssim  \left(t^{1/(2q_1)}+t^{1/(2q_2)}\right)(\|b_1\|_{p_1}+\|b_2\|_{p_2}) p^0_{0, \alpha}(t,x,y),
\end{align*}
which proves \eqref{1171130}. The second conclusion \eqref{e:3.14} readily follows in view of Proposition \ref{P:3.1}.

\end{proof}
The next proposition is analogous to Proposition \ref{121051} above but concerning the case  $x\in \IR_+$ and $y\in D_\eps \cup \{a^*\}$.
\begin{prop}\label{1236}
There exist $\alpha_2>0$ such that for every  $0<\alpha<\alpha_2$, it holds
\begin{eqnarray}
\int_0^t \int_{z\in E}p^0_{0,\alpha}(t-s,x,z)|b(z)|\left|\nabla_z p(s, z, y )\right| m_p(dz)ds\le C_6(t)p^0_{0,\alpha}(t,x,y),\nonumber
 \\
 0<s<t\le T, x\in \IR_+, y\in D_\eps \cup \{a^*\},\label{Duhamel}
\end{eqnarray}
where $C_6(t)$ is a positive non-decreasing function in $t$ \emph{(}possibly depending on $\alpha$\emph{)}  such that  $C_6(t)\rightarrow 0$ as $t \rightarrow 0$.
\end{prop}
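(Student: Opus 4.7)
The plan is to follow the blueprint of Proposition \ref{121051} almost verbatim, the only genuinely new contribution being the handling of the part of the gradient estimate from Proposition \ref{P:3.2} whose Gaussian decay is in the Euclidean distance $|z-y|$ rather than in $|z|_\rho+|y|_\rho$. I would split the $z$-integration into $\IR_+$ and $D_\eps\cup\{a^*\}$, insert the pointwise gradient estimates from Propositions \ref{P:3.1} and \ref{P:3.2}, extract the target exponential $e^{-\alpha(|x|^2+|y|_\rho^2)/t}$ of $p^0_{0,\alpha}(t,x,y)$ from the joint exponent via elementary convolution-type inequalities, integrate the residual Gaussian against $|b_i|$ by H\"{o}lder's inequality, and evaluate the resulting beta-type time integral.

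On the portion $z\in\IR_+$, Proposition \ref{P:3.1} (together with Remark \ref{R:3.2} when $y=a^*$) gives $|\nabla_z p(s,z,y)|\lesssim s^{-1}e^{-\beta_1(|z|^2+|y|_\rho^2)/s}$. The elementary one-dimensional composition $\frac{|x-z|^2}{t-s}+\frac{|z|^2}{s}\ge\frac{|x|^2}{t}$ (minimized at $z^*=sx/t$), combined with $s\le t$, gives for any $\alpha<\beta_1$ the splitting
$\frac{\alpha|x-z|^2}{t-s}+\frac{\beta_1(|z|^2+|y|_\rho^2)}{s}\ge\frac{\alpha(|x|^2+|y|_\rho^2)}{t}+\frac{(\beta_1-\alpha)|z|^2}{s}$.
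H\"{o}lder with conjugate exponent $q_1=p_1/(p_1-1)$ applied to the residual $e^{-(\beta_1-\alpha)|z|^2/s}$ produces a factor $\lesssim s^{1/(2q_1)}$; the beta integral $\int_0^t(t-s)^{-1/2}s^{1/(2q_1)-1}ds\asymp t^{1/(2q_1)-1/2}$ then closes this case, contributing $\lesssim \|b_1\|_{p_1}\,t^{1/(2q_1)}\,p^0_{0,\alpha}(t,x,y)$, which is $o(1)$ because $p_1>1$.

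On the portion $z\in D_\eps\cup\{a^*\}$, the two subcases of Proposition \ref{P:3.2} can be unified into the single pointwise bound
$|\nabla_z p(s,z,y)|\lesssim s^{-1}e^{-\beta_2(|z|_\rho^2+|y|_\rho^2)/s}+s^{-3/2}e^{-\beta_2|z-y|^2/s}$.
The first term is treated exactly as in Case~2 of Proposition \ref{121051}: the splitting
$\alpha\frac{|x|^2+|z|_\rho^2}{t-s}+\beta_2\frac{|z|_\rho^2+|y|_\rho^2}{s}\ge\alpha\frac{|x|^2+|y|_\rho^2}{t}+\beta_2\frac{|z|_\rho^2}{s}$,
H\"{o}lder on $b_2$ with $q_2=p_2/(p_2-1)$, and the $m_p$-integral estimate \eqref{e:3.15} give a contribution $\lesssim \|b_2\|_{p_2}\,t^{1/(2q_2)}\,p^0_{0,\alpha}(t,x,y)$. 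The main obstacle is the second term, whose Gaussian is in $|z-y|$, so that the naive splitting no longer exposes $e^{-\alpha|y|_\rho^2/t}$. The key geometric input is
$|y|_\rho=|y|-\eps\le|z|-\eps+|z-y|=|z|_\rho+|z-y|$,
which combined with Cauchy-Schwarz produces the convolution estimate
$$
\frac{|z|_\rho^2}{t-s}+\frac{|z-y|^2}{s}\ge\frac{(|z|_\rho+|z-y|)^2}{t}\ge\frac{|y|_\rho^2}{t}.
$$
Together with $\frac{|x|^2}{t-s}\ge\frac{|x|^2}{t}$ this lets one extract $e^{-\alpha(|x|^2+|y|_\rho^2)/t}$ for $\alpha<\beta_2$ and leave a residual $e^{-(\beta_2-\alpha)|z-y|^2/s}$, whose $q_2$-th power integrates over $D_\eps$ (indeed over all of $\IR^2$) to $\asymp s$, giving a H\"{o}lder factor $s^{1/q_2}$. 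The resulting time integral $\int_0^t(t-s)^{-1/2}s^{1/q_2-3/2}ds\asymp t^{1/q_2-1}$ converges precisely because $q_2<2$, i.e.\ $p_2>2$, and yields a contribution $\lesssim \|b_2\|_{p_2}\,t^{1/q_2-1/2}\,p^0_{0,\alpha}(t,x,y)$.

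Setting $\alpha_2:=\beta_1\wedge\beta_2$, any $\alpha<\alpha_2$ is admissible for all three splittings, and summing the three contributions yields \eqref{Duhamel} with
$C_6(t)=O\bigl(t^{1/(2q_1)}+t^{1/(2q_2)}+t^{1/q_2-1/2}\bigr)\to 0$ as $t\downarrow 0$. The only step requiring any new idea beyond Proposition \ref{121051} is the $|z-y|$-Gaussian piece from $z\in D_\eps$, where the geodesic-flavored Cauchy-Schwarz inequality above replaces the standard Gaussian semigroup composition, and where the hypothesis $p_2>2$ is sharply used to make the $s^{-3/2}$ singularity of the gradient bound integrable in time.
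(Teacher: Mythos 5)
Your proposal is correct and follows essentially the same route as the paper's proof: the same pointwise gradient bounds from Propositions \ref{P:3.1}--\ref{P:3.2}, the same triangle inequality $\frac{|z|_\rho^2}{t-s}+\frac{|z-y|^2}{s}\ge\frac{|y|_\rho^2}{t}$ (the paper's \eqref{triangle-inequality-2}), the same H\"older/beta-integral bookkeeping with $p_2>2$ ensuring integrability of the $s^{-3/2}$ singularity, and the same final bound $C_6(t)\lesssim t^{1/(2q_1)}+t^{1/(2q_2)}+t^{1/2-1/p_2}$. Your only deviation is cosmetic: you use the unified gradient bound $|\nabla_z p(s,z,y)|\lesssim s^{-1}e^{-\beta_2(|z|_\rho^2+|y|_\rho^2)/s}+s^{-3/2}e^{-\beta_2|z-y|^2/s}$ on all of $D_\eps\cup\{a^*\}$ (which the paper itself derives as \eqref{e:3.12}) instead of splitting into the subcases $|z|_\rho\ge 1$, $|z|_\rho<1$ and $|y|_\rho\ge 1$, $|y|_\rho<1$.
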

\begin{proof}

We again assume $b\ge 0$ and divide the proof into three cases depending on the position of $z$. 
\\
{\it Case 1.} $z\in \IR_+$. In view of Proposition \ref{P:3.1}, for this case, we first  show the following inequality:  For any $0<\alpha<\beta$, it holds
\begin{align}
\int_0^t \frac{1}{\sqrt{s}}\int_{E}p^0_{0,\alpha}(t-s, x,z)b(z)p^0_{0, \beta}(s, z,y)& m_p(dz)ds \le c_1(t)p^0_{0, \alpha}(t,x,y),\nonumber
\\
& 0<s<t\le T, x\in \IR_+, y\in D_\eps \cup \{a^*\}, \label{eqP:3.4}
\end{align}
for some $c_1(t)$ satisfying the same conditions as  those for  $C_6(t)$. The following version of  triangle inequality will be needed in this proof:
\begin{equation}\label{triangle-ineq-1}
\frac{|x-z|^2}{t-s}+\frac{|z|^2}{s}\ge \frac{|x|^2}{t},\quad x,z\in \IR_+.
\end{equation}
Since $b_1\in L^{p_1}(\IR_+)$ with $p_1\in (1, \infty]$, for $q_1>0$ satisfying $1/p_1+1/q_1=1$, it holds for  $0<\alpha<\beta$:
\begin{eqnarray}
&&  \int_{0}^t \frac{1}{\sqrt{s}}\int_{\IR_+}p^0_{0,\alpha} (t-s, x,z)b(z)p^0_{0,\beta}(s, z,y)m_p(dz)ds \nonumber
\\
&=& \int_{0}^t \frac{1}{\sqrt{s}}\int_{z\in \IR_+} \frac{1}{\sqrt{t-s}}e^{-\alpha |x-z|^2/(t-s)}\frac{1}{\sqrt{s}} e^{-\beta (|z|^2+|y|_\rho^2)/s}b(z)m_p(dz)ds  \nonumber
\\
&\stackrel{\eqref{triangle-ineq-1}}{\le}& \int_{0}^t \int_{z\in \IR_+} \frac{1}{(t-s)^{1/2}}\frac{1}{s^{1-1/(2q_1)}}e^{-\alpha(|x|^2+ |y|_\rho^2)/t}\; b(z) \frac{1}{s^{1/(2q_1)}}e^{-(\beta-\alpha)|z|^2/s} m_p(dz)ds  \nonumber
\\
&\le & \int_{0}^t \frac{1}{(t-s)^{1/2}}\frac{\|b_1\|_{p_1}}{s^{1-1/(2q_1)}}e^{-\alpha(|x|^2+ |y|_\rho^2)/t}\left(\int_{z\in \IR_+}\left(\frac{1}{s^{1/(2q_1)}}e^{-(\beta-\alpha)|z|^2/s}\right)^{q_1}m_p(dz)\right)^{1/q_1}ds \nonumber
\\
&\lesssim & \|b_1\|_{p_1}\frac{t^{1/(2q_1)}}{\sqrt{t}}e^{-\alpha (|x|^2+|y|_\rho^2)/t}=\|b_1\|_{p_1}t^{1/(2q_1)}p^0_{0, \alpha}(t,x,y), \label{102}
\end{eqnarray}
where
\begin{align*}
\int_{z\in \IR_+}\left(\frac{1}{s^{1/(2q_1)}}e^{-(\beta-\alpha)|z|^2/s}\right)^{q_1}m_p(dz)&=\int_{z\in \IR_+}\frac{p}{s^{1/2}}e^{-(\beta-\alpha)q_1|z|^2/s}dz=\frac{p}{2}\sqrt{\frac{\pi}{(\beta-\alpha)q_1}}.
\end{align*}
Now with Proposition \ref{P:3.1}, we have for $0<\alpha<\beta_1$ that
\begin{equation}
\int_0^t \int_{\IR_+}p^0_{0,\alpha}(t-s, x,z)b(z)|\nabla _zp(s, z,y)| m_p(dz)ds \lesssim  \|b_1\|_{p_1}t^{1/(2q_1)}p^0_{0, \alpha}(t,x,y).
\end{equation}
{\it Case 2.} $z\in D_\eps,\, \text{and } |z|_\rho\ge 1$.
In view of Proposition \ref{P:3.2}, for this case,  we first  establish the following inequality for  $0<\alpha<\beta$:
\begin{align}
\int_0^t \frac{1}{\sqrt{s}}\int_{z\in D_\eps, |z|_\rho\ge 1}p^0_{0,\alpha}(t-s, x,z)b(z)p^0_{0,\beta}(s, z,y)&m_p(dz)ds \le c_2(t)p^0_{0,\alpha}(t,x,y), \nonumber
\\
& 0<s<t\le T, x\in \IR_+, y\in D_\eps \cup \{a^*\},\label{604}
\end{align}
where $c_2(t)\downarrow 0$ as $t\rightarrow 0$. Similar to \eqref{triangle-ineq-1}, we record the following version of triangle inequality:
\begin{equation}\label{triangle-inequality-2}
 \frac{|z|_\rho^2}{t-s}+\frac{|z-y|^2}{s}\ge \frac{|y|_\rho^2}{t},
\end{equation}
which follows from
$$
(t-s)s|y|^2_\rho\le (t-s)s(|z|_\rho+|z-y|)^2\le st|z|_\rho^2+(t-s)t|z-y|^2.
$$
Again by H\"{o}lder's   inequality,  since $p_2>2$, we have for $q_2: 1/p_2+1/	q_2=1$ that
\begin{eqnarray}
&& \int_{0}^t \frac{1}{\sqrt{s}}\int_{z\in D_\eps, |z|_\rho\ge 1}p^0_{0, \alpha} (t-s, x,z)b(z)p^0_{0,\beta}(s, z,y)m_p(dz)ds \nonumber
\\
&=& \int_{0}^t \frac{1}{(t-s)^{1/2}}e^{-\alpha (|x|^2+|z|_\rho^2)/(t-s)}\int_{z\in D_\eps, |z|_\rho\ge 1}\frac{1}{s^{3/2}}e^{-\beta |z-y|^2/s}b(z)m_p(dz)ds \nonumber
\\
&\le &  \int_{0}^t \frac{1}{(t-s)^{1/2}}e^{-\alpha (|x|^2+|z|_\rho^2)/(t-s)}\int_{z\in D_\eps}\frac{1}{s^{3/2}}e^{-\beta |z-y|^2/s}b(z)m_p(dz)ds \nonumber
\\
&= & \int_{0}^t \frac{1}{(t-s)^{1/2}}e^{-\alpha (|x|^2+|z|_\rho^2)/(t-s)-\alpha |z-y|^2/s}\int_{D_\eps}\frac{1}{s^{3/2}}e^{-(\beta-\alpha) |z-y|^2/s}b(z)m_p(dz)ds \nonumber
\\
&\stackrel{\eqref{triangle-inequality-2}}{\le} & e^{-\alpha (|x|^2+|y|_\rho^2)/t}\int_{0}^t \frac{1}{s^{3/2-1/q_2}} \frac{1}{(t-s)^{1/2}}  \int_{z\in D_\eps}\frac{b(z)}{s^{1/q_2}}e^{-(\beta-\alpha)|z-y|^2/s} m_p(dz)ds \nonumber
\\
&\lesssim & e^{-\alpha (|x|^2+|y|_\rho^2)/t} \int_{0}^t \frac{1}{s^{3/2-1/q_2}} \frac{\|b_2\|_{p_2}}{(t-s)^{1/2}} \left(\int_{z\in D_\eps}\left(\frac{1}{s^{1/q_2}}e^{-(\beta-\alpha)|z-y|^2/s}\right)^{q_2} m_p(dz)\right)^{1/q_2} ds  \nonumber
\\
&\lesssim & \frac{t^{1/{q_2}}}{t}e^{-\alpha(|x|^2+|y|^2_\rho)/t}\|b_2\|_{p_2}=\|b_2\|_{p_2}t^{1/2-1/p_2}p^0_{0, \alpha}(t,x,y), \label{146}
\end{eqnarray}
where the last $``\lesssim"$ in \eqref{146}  follows from
\begin{align*}
\int_{D_\eps}\left(\frac{1}{s^{1/q_2}}e^{-(\beta-\alpha)|z-y|^2/s}\right)^{q_2} m_p(dz)&=\int_{D_\eps}\frac{1}{s}e^{-(\beta-\alpha)q_2|z-y|^2/s} dz
\\
&\le \int_{\IR^2}\frac{1}{s}e^{-(\beta-\alpha)q_2|z|^2/s} dz=\frac{\pi }{( \beta -\alpha)q_2}.
\end{align*}
Now by Proposition \ref{P:3.2}, we have shown for $0<\alpha<\beta_2$:
\begin{equation}
\int_0^t \int_{z\in D_\eps, |z|_\rho \ge 1}p^0_{0,\alpha}(t-s, x,z)b(z)|\nabla _zp(s, z,y)| m_p(dz)ds \lesssim \|b_2\|_{p_2}t^{1/2-1/p_2}p^0_{0, \alpha}(t,x,y).
\end{equation}
{\it Case 3.} $z\in D_\eps\cup \{a^*\}, \, \text{and } |z|_\rho\le 1$.  Again we let $q_2$  satisfy $1/p_2+1/	q_2=1$.  In this case, if $|y|_\rho\ge 1$, following an almost exact same computation as that for Case 2, we have for any $0<\alpha<\beta$ that
\begin{eqnarray}
&&\int_{0}^t \frac{1}{\sqrt{s}}\int_{z\in D_\eps, |z|_\rho< 1}p^0_{0, \alpha} (t-s, x,z)b(z)p^0_{0, \beta}(s, z,y)m_p(dz)ds \nonumber
\\
&\le &\int_{0}^t \frac{1}{(t-s)^{1/2}}e^{-\alpha (|x|^2+|z|_\rho^2)/(t-s)}\int_{D_\eps}\frac{1}{s^{3/2}}e^{-\beta |z-y|^2/s}b(z)m_p(dz)ds \nonumber
\\
&\lesssim &e^{-\alpha (|x|^2+|y|_\rho^2)/t} \int_{0}^t \frac{1}{s^{3/2-1/q_2}} \frac{\|b_2\|_{p_2}}{(t-s)^{1/2}}  \left(\int_{z\in D_\eps}\left(\frac{1}{s^{1/q_2}}e^{-(\beta-\alpha)|z-y|^2/s}\right)^{q_2} m_p(dz)\right)^{1/q_2} ds  \nonumber
\\
&\lesssim &\frac{t^{1/{q_2}}}{t}e^{-\alpha(|x|^2+|y|^2_\rho)/t}\|b_2\|_{p_2}=\|b_2\|_{p_2}t^{1/2-1/p_2}p^0_{0, \alpha}(t,x,y). \label{e:3.28}
\end{eqnarray}
Owing to Proposition \ref{P:3.2}, \eqref{e:3.28} shows for this case with $|y|_\rho\ge 1$ that,  for $0<\alpha<\beta_2$,
\begin{equation*}
\int_0^t \int_{z\in D_\eps, |z|_\rho<1}p^0_{0,\alpha}(t-s,x,z)b(z)\left|\nabla_z p(s, z, y )\right| m_p(dz)ds \lesssim \|b_2\|_{p_2}t^{1/2-1/p_2}p^0_{0, \alpha}(t,x,y).
\end{equation*}
If $|y|_\rho<1$, in view of Proposition \ref{P:3.2}, for $\beta_2>0$ it holds
\begin{align*}
|\nabla_z p(s,z,y)|&\lesssim \frac{1}{s}e^{-\beta_2(|z|_\rho^2+|y|_\rho^2)/s}+\frac{1}{s^{3/2}}\left(1\wedge \frac{|y|_\rho}{\sqrt{s}}\right)e^{-\beta_2 |z-y|^2/s} 
\\
&\le  \frac{1}{s}e^{-\beta_2(|z|_\rho^2+|y|_\rho^2)/s}+\frac{1}{s^{3/2}}e^{-\beta_2 |z-y|^2/s}.
\end{align*}
Therefore, for  $\alpha, \beta$ satisfying $0<\alpha<\beta<\beta_2$, where $\beta_2$ is  chosen  in Proposition \ref{P:3.2}, it holds
\begin{eqnarray}
&& \int_{0}^t \int_{z\in D_\eps, |z|_\rho< 1}p^0_{0, \alpha} (t-s, x,z)b(z)|\nabla_zp(s, z,y)|m_p(dz)ds  \nonumber
\\
&\lesssim & \int_0^t \frac{1}{\sqrt{t-s}}e^{-\alpha (|x|^2+|z|_\rho^2)/(t-s)}\int_{z\in D_\eps, |z|_\rho\le 1}\frac{b(z)}{s}e^{-\beta (|z|_\rho^2+|y|_\rho^2)/s}m_p(dz)ds \nonumber
\\
&+& \int_{0}^t \frac{1}{\sqrt{t-s}}e^{-\alpha (|x|^2+|z|_\rho^2)/(t-s)}\int_{z\in  D_\eps, |z|_\rho\le 1}\frac{1}{s^{3/2}}e^{-\beta |z-y|^2/s}b(z)m_p(dz)ds \nonumber
\\
& =&{\rm(I)}+{\rm (II)}. \label{e:3.29}
\end{eqnarray}
We first observe that using the exact same computation as that for Case 2, one can show 
\begin{equation}\label{eq:452}
{\rm (II)}\le \int_{0}^t \frac{1}{\sqrt{t-s}}e^{-\alpha (|x|^2+|z|_\rho^2)/(t-s)}\int_{D_\eps}\frac{1}{s^{3/2}}e^{-\beta |z-y|^2/s}b(z)m_p(dz)ds \lesssim \|b_2\|_{p_2}t^{1/2-1/p_2}p^0_{0, \alpha}(t,x,y).
\end{equation}
On the other hand, to handle (I) in \eqref{e:3.29}, we have:
\begin{align}
{\rm (I)}&= \int_0^t \frac{1}{\sqrt{t-s}}e^{-\alpha (|x|^2+|z|_\rho^2)/(t-s)}\int_{z\in D_\eps, |z|_\rho\le 1}\frac{b(z)}{s}e^{-\beta (|z|_\rho^2+|y|_\rho^2)/s}m_p(dz)ds \nonumber 
\\
&\le  e^{-\alpha(|x|^2+|y|_\rho^2)/t}\int_0^t \frac{1}{(t-s)^{1/2}}\frac{1}{s^{1-1/(2q_2)}}\int_{z\in D_\eps, |z|_\rho\le 1}\frac{b(z)}{s^{1/(2q_2)}}e^{-\beta |z|^2_\rho/s}m_p(dz)ds \nonumber
\\
&\lesssim e^{-\alpha(|x|^2+|y|_\rho^2)/t}\int_0^t \frac{1}{(t-s)^{1/2}}\frac{\|b_2\|_{p_2} }{s^{1-1/(2q_2)}}\left(\int_{z\in D_\eps, |z|_\rho\le 1}\frac{1}{\sqrt{s}}e^{-\beta q_2 |z|^2_\rho/s}m_p(dz)\right)^{1/q_2}ds\nonumber
\\
&\stackrel{r=|z|_\rho}{=}  e^{-\alpha(|x|^2+|y|_\rho^2)/t}\int_0^t \frac{1}{(t-s)^{1/2}}\frac{\|b_2\|_{p_2}}{s^{1-1/(2q_2)}}\left(\int_{0}^1\frac{r+\eps}{\sqrt{s}}e^{-\beta q_2 r^2/s}dr\right)^{1/q_2} ds \nonumber
\\
&\lesssim  \|b_2\|_{p_2}\frac{t^{1/(2q_2)}}{\sqrt{t}}e^{-\alpha(|x|^2+|y|_\rho^2)/t}=\|b_2\|_{p_2}t^{1/(2q_2)}p^0_{0, \alpha}(t,x,y),\label{e:3.30}
\end{align}
where the last inequality in \eqref{e:3.30} follow from 
\begin{align}\label{integral-1-d-BM-density}
\int_{0}^1\frac{r+\eps}{\sqrt{s}}e^{-\beta q_2 r^2/s}dr &\lesssim \int_{0}^1\frac{1}{\sqrt{s}}e^{-\beta q_2 r^2/s}dr \le  \int_{0}^\infty \frac{1}{\sqrt{s}}e^{-\beta q_2 r^2/s}dr =\frac{1}{2}\sqrt{\frac{\pi }{\beta q_2}}.
\end{align}
In view of \eqref{e:3.29}, adding up \eqref{eq:452} and \eqref{e:3.30}   shows for Case 3 that when $|y|_\rho<1$, for $0<\alpha<\beta_2$, 
\begin{align*}
\int_0^t \int_{z\in D_\eps, |z|_\rho<1}p^0_{0,\alpha}(t-s,x,z)b(z)&\left|\nabla_z p(s, z, y )\right| m_p(dz)ds \lesssim \|b_2\|_{p_2}\left(t^{1/(2q_2)}+t^{1/2-1/p_2}\right)p^0_{0, \alpha}(t,x,y).
\end{align*}
Now combining the  two parts for Case 3: $|y|_\rho\ge 1$ and $|y|_\rho<1$, we establish for Case 3 that,  for  any $0<\alpha<\beta_2$ where  $\beta_2$  is chosen in Proposition \ref{P:3.2}, it holds 
\begin{align*}
&\quad \int_0^t \int_{z\in D_\eps}p^0_{0, \alpha}(t-s,x,z)b(z)\left|\nabla_z p(s, z, y )\right| m_p(dz)ds  \lesssim \|b_2\|_{p_2}\left(t^{1/(2q_2)}+t^{1/2-1/p_2}\right)p^0_{0, \alpha}(t,x,y).
\end{align*}
Combining the discussion for Cases 1-3,  we have shown that  for  any $0<\alpha<(\beta_1\wedge \beta_2)$, it holds 
\begin{align*}
 \int_0^t \int_{z\in E}p^0_{0,\alpha}(t-s,x,z)b(z) & \left|\nabla_z p(s, z, y )\right| m_p(dz)ds
\\ 
 & \lesssim \left(\|b_1\|_{p_1}+\|b_2\|_{p_2}\right)\left(t^{1/(2q_1)}+t^{1/(2q_2)}+t^{1/2-1/p_2}\right)p^0_{0, \alpha}(t,x,y).
\end{align*}
\end{proof}
For the case when $x\in D_\eps \cup \{a^*\}$ and $y\in \IR_+$, to get the result analogous to Proposition \ref{1236}, we need to split it further into two propositions depending on whether $|x|_\rho\le 1$ or $|x|_\rho> 1$. When $x\in D_\eps \cup \{a^*\}$ with $|x|_\rho\le 1$, for computation convenience, we rewrite the canonical form of the heat kernel of $X^0$ and name it as $p^0_{1, \alpha}$ as follows:
\begin{equation}\label{canonical-form1}
p^0_{1,\alpha}(t,x,y):=\left\{
  \begin{aligned}
   & \frac{1}{\sqrt{t}}e^{-\alpha |x-y|^2/t} ,\quad
 x\in \IR_+,y\in \IR_+;&
\\ 
& \frac{1}{\sqrt{t}}e^{-\alpha (|x|^2+|y|_\rho^2)/t} ,\quad 
x\in \IR_+,y\in D_\eps \cup \{a^*\} ;& 
\\ 
& \frac{1}{\sqrt{t}}e^{-\alpha (|x|_\rho^2+|y|^2)/t} ,\quad
x\in D_\eps  \cup \{a^*\}, y\in \IR_+;& 
\\ 
 & \frac{1}{t}e^{-\alpha |x-y|^2/t}, \quad  x,y\in D_\eps\cup \{a^*\} \text{ with } \max\{|x|_\rho,|y|_\rho\}> 1;& 
\\ 
&   \frac{1}{\sqrt{t}}e^{-\alpha(|x|^2_\rho+|y|_\rho^2)/t}+\frac{1}{t}\left(1\wedge \frac{|x|_\rho}{\sqrt{t}}\right)\left(1\wedge \frac{|y|_\rho}{\sqrt{t}}\right)e^{-2\alpha |x-y|^2/t}, \text{ otherwise}. & 
  \end{aligned}
\right.
\end{equation}
The only difference between $p^0_{0, \alpha}(t,x,y)$ and $p^0_{1,\alpha}(t,x,y)$ is that in the last display, the coefficient $\alpha$ on the second exponential term is replaced with $2\alpha$. The following lemma is immediate.
\begin{lem}
There exist constants $C_7, C_8>0$ and $\alpha_3, \alpha_4>0$ such that 
\begin{equation*}
C_7\;p^0_{1,\alpha_3}(t,x,y) \le p(t,x,y)\le C_8\;p^0_{1,\alpha_4}(t,x,y), \qquad (t,x,y)\in (0,T]\times E\times E.
\end{equation*}
\end{lem}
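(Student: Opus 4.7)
The plan is to deduce this lemma directly from the two-sided bounds already established for $p(t,x,y)$ in Theorem \ref{smalltime}, as restated in Remark 1.6: there exist $0<\alpha_1<\alpha_2$ such that $p^0_{0,\alpha_1}(t,x,y) \lesssim p(t,x,y) \lesssim p^0_{0,\alpha_2}(t,x,y)$ on $(0,T]\times E\times E$. Since $p^0_{1,\alpha}$ was introduced purely for computational convenience and differs from $p^0_{0,\alpha}$ only in a single exponential coefficient, the task reduces to a purely algebraic comparison of the two canonical forms.

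First I would inspect definitions \eqref{e:1.3} and \eqref{canonical-form1} case by case. In four of the five cases the two expressions are literally identical. They differ only in the ``otherwise'' regime $x,y\in D_\eps\cup\{a^*\}$ with $\max\{|x|_\rho, |y|_\rho\}\le 1$, where the cross-term exponential decay rate is $\alpha$ in $p^0_{0,\alpha}$ but $2\alpha$ in $p^0_{1,\alpha}$. Since $e^{-2\alpha u}\le e^{-\alpha u}$ for every $u\ge 0$, one immediately obtains the pointwise inequality $p^0_{1,\alpha}(t,x,y)\le p^0_{0,\alpha}(t,x,y)$. For the reverse direction, comparing $p^0_{0,\alpha}$ against $p^0_{1,\alpha/2}$ in the ``otherwise'' case, the first summand satisfies $e^{-\alpha(|x|_\rho^2+|y|_\rho^2)/t}\le e^{-(\alpha/2)(|x|_\rho^2+|y|_\rho^2)/t}$, while the second summand is left unchanged since $e^{-\alpha|x-y|^2/t}=e^{-2(\alpha/2)|x-y|^2/t}$. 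Consequently $p^0_{0,\alpha}(t,x,y)\le p^0_{1,\alpha/2}(t,x,y)$ pointwise as well.

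Combining these two comparisons with Remark 1.6 finishes the argument. For the lower bound, setting $\alpha_3:=\alpha_1$ yields $p(t,x,y)\gtrsim p^0_{0,\alpha_1}\ge p^0_{1,\alpha_1}=p^0_{1,\alpha_3}$, so $C_7$ may be taken to be the implicit constant from Remark 1.6. For the upper bound, setting $\alpha_4:=\alpha_2/2$ yields $p(t,x,y)\lesssim p^0_{0,\alpha_2}\le p^0_{1,\alpha_2/2}=p^0_{1,\alpha_4}$, fixing $C_8$ accordingly. There is essentially no obstacle; the lemma is a bookkeeping device whose point, as the Duhamel computations for the case $x\in D_\eps\cup\{a^*\}$, $y\in\IR_+$ with $|x|_\rho\le 1$ will show, is to retain a strictly larger exponential rate $2\alpha$ on $|x-y|^2/t$. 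That extra factor of two is precisely what is absorbed when a triangle-type inequality such as $\frac{|z|_\rho^2}{t-s}+\frac{|z-y|^2}{s}\ge \frac{|y|_\rho^2}{t}$ is invoked inside the convolution, after which a positive quadratic decay still remains to control the H\"older estimate against $b$.
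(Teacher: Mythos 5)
Your proposal is correct and matches what the paper intends: the paper offers no written proof (it declares the lemma ``immediate''), and the intended justification is exactly your pointwise comparison $p^0_{1,\alpha}\le p^0_{0,\alpha}$ and $p^0_{0,\alpha}\le p^0_{1,\alpha/2}$ combined with the two-sided bounds of Theorem \ref{smalltime} as recorded in \eqref{RMK:1.6}. The algebra in both directions checks out, so nothing further is needed.
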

\begin{prop}\label{P:3.6}
 It holds for any pair of $\alpha, \beta$ satisfying $0<\alpha<\beta/2$ that
\begin{align}
\int_0^t \frac{1}{\sqrt{s}}\int_{E}p^0_{1,\alpha}(t-s, x,z)|b(z)|& p^0_{1,\beta}(s, z,y)m_p(dz)ds \le C_{9}(t)p^0_{1,\alpha}(t,x,y), \nonumber
\\
&\quad  0<s<t\le T,\, x\in D_\eps \cup \{a^*\}, |x|_\rho\le 1, y \in \IR_+,\label{1015}
\end{align}
where $C_{9}(t)$ is a positive non-decreasing function   in $t$ \emph{(}possibly depending on $\alpha$ and $\beta$\emph{)} with  $C_{9}(t)\rightarrow 0$ as $t \rightarrow 0$. In  particular, there exists some $\alpha_5>0$ such that for each $0<\alpha<\alpha_5$, it holds
\begin{eqnarray}
\int_0^t \int_{z\in E}p^0_{1,\alpha}(t-s,x,z)|b(z)|\left|\nabla_z p(s, z, y )\right| m_p(dz)ds\le C_{10}(t)p^0_{1,\alpha}(t,x,y)\nonumber
\\
 0<s<t\le T, x\in D_\eps \cup \{a^*\}, |x|_\rho\le 1,  y\in \IR_+,\label{e:3.35}
\end{eqnarray}
where $C_{10}(t)$ is   a positive  non-decreasing  function   in $t$   \emph{(}possibly depending on $\alpha$\emph{)} with $C_{10}(t)\rightarrow 0$ as $t \rightarrow 0$.

\end{prop}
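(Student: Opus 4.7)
My plan is to parallel the proofs of Propositions \ref{121051} and \ref{1236}. I would assume $b \ge 0$ without loss of generality and split the inner integration over $z$ into three regions according to the location of $z$ in $E$: (A) $z \in \IR_+$, (B) $z \in D_\eps$ with $|z|_\rho \ge 1$, and (C) $z \in D_\eps \cup \{a^*\}$ with $|z|_\rho < 1$. In each region I would substitute the corresponding explicit expression for $p^0_{1,\alpha}(t-s,x,z)$ and $p^0_{1,\beta}(s,z,y)$ from \eqref{canonical-form1}, extract the target Gaussian factor $\tfrac{1}{\sqrt{t}} e^{-\alpha(|x|_\rho^2 + y^2)/t}$ via an appropriate triangle-type inequality, and then estimate the remaining integral by H\"older's inequality using $b_1 \in L^{p_1}(\IR_+)$ or $b_2 \in L^{p_2}(D_\eps)$.

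Two ingredients will be used in every region. First, since $s \le t$ and $\beta > \alpha$, the elementary inequality $e^{-\beta y^2/s} \le e^{-\alpha y^2/t}$ directly supplies the $y$-factor of $p^0_{1,\alpha}(t,x,y)$. Second, Cauchy--Schwarz gives
\begin{equation*}
\frac{A^2}{r_1} + \frac{B^2}{r_2} \ge \frac{(A+B)^2}{r_1 + r_2}
\end{equation*}
for all $r_1, r_2 > 0$ and $A, B \in \IR$; applying this with $r_1 = t-s$, $r_2 = s$ together with the geodesic triangle inequality $|x|_\rho \le |x-z| + |z|_\rho$ (valid because $\rho(x,z) \le |x-z|$ in $D_\eps$) yields $\tfrac{|x-z|^2}{t-s} + \tfrac{|z|_\rho^2}{s} \ge \tfrac{|x|_\rho^2}{t}$. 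Regions (A) and (B) are then handled by essentially the computations carried out in \eqref{1224}, \eqref{102}, and \eqref{146}, with the only novelty being to trade the bound $\tfrac{|x|_\rho^2}{t-s} \ge \tfrac{|x|_\rho^2}{t}$ and the above Cauchy--Schwarz inequality for the earlier triangle inequalities involving $y$.

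The main obstacle will be region (C), where $p^0_{1,\alpha}(t-s,x,z)$ has the delicate mixed form
\begin{equation*}
\frac{1}{\sqrt{t-s}} e^{-\alpha(|x|_\rho^2 + |z|_\rho^2)/(t-s)} + \frac{1}{t-s}\left(1 \wedge \tfrac{|x|_\rho}{\sqrt{t-s}}\right)\left(1 \wedge \tfrac{|z|_\rho}{\sqrt{t-s}}\right) e^{-2\alpha |x-z|^2/(t-s)}.
\end{equation*}
The first summand submits to the same strategy as region (B). The second summand is precisely where the hypothesis $\alpha < \beta/2$ becomes essential: applying the same Cauchy--Schwarz identity with coefficient $2\alpha$ gives $\tfrac{2\alpha |x-z|^2}{t-s} + \tfrac{2\alpha |z|_\rho^2}{s} \ge \tfrac{2\alpha |x|_\rho^2}{t}$, and the condition $\beta > 2\alpha$ leaves a strictly positive residual $e^{-(\beta - 2\alpha) |z|_\rho^2/s}$ to be absorbed by H\"older's inequality against $b_2 \in L^{p_2}$. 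The cutoff factors $1 \wedge \frac{|x|_\rho}{\sqrt{t-s}}$ and $1 \wedge \frac{|z|_\rho}{\sqrt{t-s}}$ may simply be replaced by $1$. The remaining $s$-integrals will be of the form $\int_0^t (t-s)^{-a} s^{-b}\, ds = B(1-a, 1-b) t^{1-a-b}$, where the hypotheses $p_1 > 1$ and $p_2 > 2$ guarantee $1 - a - b > 0$, producing the positive power of $t$ responsible for $C_9(t) \to 0$ as $t \to 0$.

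For part (ii), I would simply invoke Proposition \ref{P:3.1}: since $y \in \IR_+$, the pair $(z, y)$ lies in the regime covered there, so
\begin{equation*}
|\nabla_z p(s, z, y)| \lesssim \frac{1}{\sqrt{s}}\, p^0_{0, \beta_1}(s, z, y) = \frac{1}{\sqrt{s}}\, p^0_{1, \beta_1}(s, z, y),
\end{equation*}
the last equality holding because the two canonical forms coincide whenever $y \in \IR_+$. Choosing any $\alpha_5 \in (0, \beta_1/2)$ and applying part (i) with $\beta = \beta_1$ then yields \eqref{e:3.35}.
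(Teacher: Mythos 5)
There is a genuine gap, and it sits exactly at the two places where the kernel $p^0_{1,\alpha}(t-s,x,z)$ carries the two-dimensional on-diagonal factor $\frac{1}{t-s}$, namely your region (B) ($z\in D_\eps$, $|z|_\rho\ge 1$, so by \eqref{canonical-form1} the kernel is $\frac{1}{t-s}e^{-\alpha|x-z|^2/(t-s)}$) and the second summand in region (C). Your recipe there is: spend the entire Gaussian $e^{-\alpha|x-z|^2/(t-s)}$ (resp. $e^{-2\alpha|x-z|^2/(t-s)}$) in the Cauchy--Schwarz/triangle inequality to produce $e^{-\alpha|x|_\rho^2/t}$, and then apply H\"older to the residual $e^{-c|z|_\rho^2/s}$. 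But that residual Gaussian lives only in the variable $s$: its $z$-integral over $D_\eps$ is $\asymp\sqrt{s}$, so H\"older returns a factor $s^{1/(2q_2)}$ and nothing at all in $t-s$. Counting the time singularities (outer $s^{-1/2}$, $(t-s)^{-1}$ from the kernel, $s^{-1/2}$ from $p^0_{1,\beta}$), you are left with $\int_0^t s^{-1+1/(2q_2)}(t-s)^{-1}\,ds=\infty$; the integral diverges logarithmically at $s=t$, so no Beta-function identity of the form you invoke is available and the bound $C_9(t)p^0_{1,\alpha}$ does not follow. The analogy you draw with \eqref{146} is misleading precisely because there the singular time factor $s^{-3/2}$ and the Gaussian $e^{-\beta|z-y|^2/s}$ sit in the \emph{same} time variable, so H\"older over $z\in\IR^2$ can trade a power of $s$; here the bare $(t-s)^{-1}$ has no companion Gaussian left. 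Note also that in region (B) your argument never uses $\alpha<\beta/2$, which is a warning sign: in the paper that hypothesis is exactly what allows $\beta|z|_\rho^2/s$ to be converted into $\alpha|z|^2/s$ (via \eqref{252}, using $|z|_\rho\ge 1$, $\eps\le 1/4$).

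The paper's proof resolves this by \emph{not} discarding the $|x-z|$ Gaussian: in its Case 2 it keeps both Gaussians inside the H\"older bracket and evaluates the $z$-integral by the two-dimensional Chapman--Kolmogorov convolution \eqref{semigroup-2-d-BM}, which returns $\frac{1}{t}e^{-\alpha q_2|x|^2/t}$ and thereby replaces $(t-s)^{-1}s^{-1}$ by the integrable $(t-s)^{-1+1/q_2}s^{-1+1/q_2}$ (see \eqref{eq:3.32}); in its Case 3 it handles the term you call the second summand by a three-exponent H\"older with $1/p_2+1/q_2'+1/r_2=1$ and $q_2'$ large, splitting $e^{-2\alpha|x-z|^2/(t-s)}$ into two halves, one paired with $e^{-\alpha(|z|_\rho^2+|y|^2)/s}$ and estimated through the radial one-dimensional convolution \eqref{608}, the other integrated alone to regain a factor $(t-s)^{1/r_2}$ (see \eqref{eq:3.36}). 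This is where $\alpha<\beta/2$ is really consumed, not in leaving a residual $e^{-(\beta-2\alpha)|z|_\rho^2/s}$. Your region (A), the first summand of region (C), and the reduction of part (ii) to part (i) via Proposition \ref{P:3.1} (the canonical forms \eqref{e:1.3} and \eqref{canonical-form1} do coincide when one argument is in $\IR_+$) are fine; to repair the proof you must import the convolution arguments above rather than the extract-then-H\"older scheme.
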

\begin{proof}
 We observe that \eqref{1015} implies \eqref{e:3.35} because Proposition \ref{P:3.1} and Proposition \ref{P:3.2} both hold with $p^0_{\beta_i}$ being replaced with $p^0_{1, \beta_i/2}$, $i=1, 2$.
Again without loss of generality we assume $b\ge 0$. \eqref{1015} will be established for $z\in \IR_+$ and $z\in D_\eps\cup \{a^*\}$ separately.\\ 
{\it Case 1.} $z\in \IR_+$. 
For $q_1>0$ satisfying $1/p_1+1/q_1=1$, it holds
 \begin{eqnarray}
&& \int_{0}^t \frac{1}{\sqrt{s}}\int_{\IR_+}p^0_{1,\alpha} (t-s, x,z)b(z)p^0_{1,\beta}(s, z,y)m_p(dz)ds \nonumber
\\
&= &  \int_{0}^t \frac{1}{(t-s)^{1/2}}e^{-\alpha (|x|_\rho^2+|z|^2)/(t-s)}\int_{\IR_+}\frac{1}{s}e^{-\beta |z-y|^2/s}b(z)m_p(dz)ds \nonumber
\\
&=  & \int_{0}^t \frac{1}{(t-s)^{1/2}}e^{-\alpha (|x|_\rho^2+|z|^2)/(t-s)-\alpha |z-y|^2/s}\int_{\IR_+}\frac{1}{s}e^{-(\beta-\alpha) |z-y|^2/s}b(z)m_p(dz)ds \nonumber
\\
&\le &  \|b_1\|_{p_1} e^{-\alpha (|x|_\rho^2+|y|^2)/t} \int_{0}^t \frac{1}{s^{1-1/2q_1}} \frac{1}{(t-s)^{1/2}} \left(\int_{z\in \IR_+}\left(\frac{1}{s^{1/2q_1}}e^{-(\beta-\alpha)|z-y|^2/s}\right)^{q_1} m_p(dz)\right)^{1/q_1} ds  \nonumber
\\
&\lesssim & \|b_1\|_{p_1}  \frac{t^{1/{2q_1}}}{\sqrt{t}}e^{-\alpha(|x|_\rho^2+|y|^2)/t}=\|b_1\|_{p_1}t^{1/(2q_1)}p^0_{1, \alpha}(t,x,y), \label{1210}
 \end{eqnarray}
where  the  $``\le "$ in \eqref{1210}  follows from H\"{o}lder's inequality and  the following triangle inequality  which is similar to \eqref{triangle-ineq-1} and \eqref{triangle-inequality-2}:
$$
|z|^2/(t-s)+|z-y|^2/s\ge |y|^2/t.
$$
{\it Case 2.} $z\in D_\eps\cup \{a^*\}$, $|z|_\rho \ge 1$. We first observe that
\begin{equation}\label{252}
\beta |z|_\rho\ge \alpha (|z|_\rho+\eps)=\alpha |z|, \quad \text{when } |z|_\rho \ge \frac{\alpha \eps}{\beta-\alpha}.
\end{equation}
This in particular implies that  when $\alpha<\beta/2$ and $|z|_\rho\ge \eps$, it holds $\beta|z|_\rho\ge \alpha|z|$. Furthermore, since it is assumed throughout this paper that $0<\eps\le 1/4$, it holds for $0<\alpha<\beta/2 $ that
\begin{equation*}
\beta |z|_\rho\ge \alpha|z|, \quad \text{when }|z|_\rho \ge 1. 
\end{equation*}
We first record the following computation which will be used later in this proof. We temporarily in the following computation denote  by $\wh{p}_2(t,x,y)=(2\pi t)^{-1}\exp(-|x-y|^2/(2t))$ the transition density of a standard $2$-dimensional Brownian motion. We let  $q_2$ satisfy $1/p_2+1/q_2=1$.
\begin{eqnarray}
&& \int_{z\in D_\eps, |z|_\rho\ge 1} \frac{1}{t-s}e^{-\alpha q_2|x-z|^2/(t-s)}\frac{1}{s}e^{-\alpha q_2|z|^2/s}m_p(dz)  \nonumber
\\
&\le & \int_{\IR^2} \frac{1}{t-s}e^{-\alpha q_2|x-z|^2/(t-s)}\frac{1}{s}e^{-\alpha q_2|z|^2/s}dz \nonumber
\\
&= &\int_{\IR^2} \frac{\pi ^2}{\alpha ^2 q_2^2}\, \wh{p}_2\left(\frac{t-s}{2\alpha q_2}, x, z\right)\wh{p}_2\left(\frac{s}{2\alpha q_2}, z, 0\right)\,dz  \nonumber
\\
&= &\frac{ \pi ^2}{\alpha^2 q_2^2} \, \wh{p}_2\left(\frac{t}{2\alpha q_2}, x, 0\right)\asymp \frac{1}{t} e^{-\alpha q_2|x|^2/t}. \label{semigroup-2-d-BM}
\end{eqnarray}
Therefore,
\begin{eqnarray}
&& \int_{0} ^t\frac{1}{\sqrt{s}}\int_{z\in D_\eps, |z|_\rho\ge 1}p^0_{1,\alpha} (t-s, x,z)b(z)p^0_{1,\beta}(s, z,y)m_p(dz)ds \nonumber
\\
&= & \int_{0}^t \frac{1}{t-s}e^{-\alpha |x-z|^2/(t-s)}\int_{z\in D_\eps, |z|_\rho\ge 1} b(z)\frac{1}{s}e^{-\beta(|y|^2+|z|_\rho^2)/s}m_p(dz)ds \nonumber
\\
&\stackrel{\eqref{252}}{\le} & e^{-\beta |y|^2/t}\int_{0}^t \int_{z\in D_\eps, |z|_\rho\ge 1} \frac{1}{t-s}e^{-\alpha |x-z|^2/(t-s)}\cdot b(z)\frac{1}{s}e^{-\alpha |z|^2/s}m_p(dz)ds \nonumber
\\
&\lesssim & \|b_2\|_{p_2}e^{-\beta |y|^2/t}\int_{0}^t \frac{1}{(t-s)^{1-1/q_2}}\frac{1}{s^{1-1/q_2}}\nonumber
\\
&\times &\left(\int_{z\in D_\eps, |z|_\rho\ge 1} \frac{1}{t-s}e^{-\alpha q_2|x-z|^2/(t-s)}\frac{1}{s}e^{-\alpha q_2|z|^2/s}m_p(dz)\right)^{1/q_2 } ds  \nonumber
\\
&\stackrel{\eqref{semigroup-2-d-BM}}{\lesssim} & e^{-(\beta |y|^2+\alpha |x|^2)/t}\frac{\|b_2\|_{p_2}}{t^{1/q_2}}\int_{0}^t \frac{1}{(t-s)^{1-1/q_2}}\frac{1}{s^{1-1/q_2}}ds\nonumber
\\
&\lesssim &\frac{\|b_2\|_{p_2}}{t^{1-1/q_2}}e^{-\alpha(|x|^2+|y|^2)/t } \le  \frac{\|b_2\|_{p_2}}{t^{1/p_2}}e^{-\alpha(|x|_\rho^2+|y|^2)/t }=\|b_2\|_{p_2}t^{1/2-1/p_2}p^0_{1, \alpha}(t,x,y). \label{eq:3.32}
\end{eqnarray}
 where $1/2-1/p_2>0$, given that $p_2\in (2,\infty]$. 
 \\
{\it Case 3.} $z\in D_\eps\cup \{a^*\}$, $|z|_\rho<1$.  We recall that when $\max\{|x|_\rho, |z|_\rho\}\le 1$,
\begin{align}
p^0_{1,\alpha}(t-s,x,z)&= \frac{1}{\sqrt{t-s}}e^{-\alpha(|x|_\rho^2+|z|_\rho^2)/(t-s)}+\frac{1}{t-s}\left(1\wedge \frac{|x|_\rho}{\sqrt{t-s}}\right)\left(1\wedge \frac{|z|_\rho}{\sqrt{t-s}}\right)e^{-2\alpha|x-z|^2/(t-s)}\nonumber 
\\
&\le \frac{1}{\sqrt{t-s}}e^{-\alpha(|x|_\rho^2+|z|_\rho^2)/(t-s)}+\frac{1}{t-s}e^{-2\alpha|x-z|^2/(t-s)}.\label{eq:3.33}
\end{align}
Therefore, 
\begin{eqnarray}
&& \int_{0}^t\frac{1}{\sqrt{s}}\int_{z\in D_\eps\cup \{a^*\}, |z|_\rho<1}p^0_{1,\alpha} (t-s, x,z)b(z)p^0_{1,\beta}(s, z,y)m_p(dz)ds \nonumber
\\
&\le & \int_{0}^t \frac{1}{\sqrt{s}} \int_{z\in D_\eps \cup \{a^*\}, |z|_\rho<1} \frac{1}{\sqrt{t-s}}e^{-\alpha (|x|_\rho^2+|z|^2_\rho)/(t-s)}b(z)p^0_{1,\beta}(s, z,y)m_p(dz)ds \nonumber
\\
&+& \int_{0}^t\frac{1}{\sqrt{s}}\int_{z\in D_\eps\cup \{a^*\}, |z|_\rho< 1}\frac{1}{t-s}e^{-2\alpha|x-z|^2/(t-s)}b(z)p^0_{1,\beta}(s,z,y)m_p(dz)ds 
\\
&= & {\rm(I})+{\rm (II)}. \label{eq:3.40}
\end{eqnarray}
For (I) on the right hand side of \eqref{eq:3.40}, it holds for $q_2:\, 1/p_2+1/q_2=1$ that
\begin{align}
{\rm(I)}&= \int_{0}^t \frac{1}{\sqrt{s}} \int_{|z|_\rho<1} \frac{1}{\sqrt{t-s}}e^{-\alpha (|x|_\rho^2+|z|^2_\rho)/(t-s)}b(z)p^0_{1,\beta}(s, z,y)m_p(dz)ds \nonumber
\\
&= \int_{0}^t \frac{1}{(t-s)^{1/2}}e^{-\alpha (|x|_\rho^2+|z|_\rho^2)/(t-s)}\int_{|z|_\rho<1}\frac{1}{s}e^{-\beta (|z|_\rho^2+|y|^2)/s}b(z)m_p(dz)ds \nonumber
\\
&\lesssim e^{-\alpha (|x|_\rho^2+|y|^2)/t} \int_{0}^t \frac{1}{s^{1-1/(2q_2)}} \frac{ \|b_2\|_{p_2}}{(t-s)^{1/2}} \left(\int_{|z|_\rho<1}\left(\frac{1}{s^{1/(2q_2)}}e^{-(\beta+\alpha)|z|_\rho^2/s}\right)^{q_2} m_p(dz)\right)^{1/q_2} ds \nonumber
\\
&\lesssim \|b_2\|_{p_2} \frac{t^{1/{(2q_2)}}}{\sqrt{t}}e^{-\alpha(|x|_\rho^2+|y|^2)/t}=\|b_2\|_{p_2}t^{1/(2q_2)}p^0_{1, \alpha}(t,x,y), \label{510}
\end{align}
where the last $``\lesssim"$ holds because the integral in $z$ is comparable to a constant, by adopting polar coordinates and  computation similar  to   that for \eqref{integral-1-d-BM-density}. To handle (II) on the right hand side of \eqref{eq:3.40}, we  first record the following computation over  $\{z\in D_\eps \cup \{a^*\}:\; |z|_\rho<1\}$, which will be used in this proof. Temporarily in the following computation, we denote by $p_1(t,x,y)=(2\pi t)^{-1/2}\exp (-|x-y|^2/(2t))$ the transition density for a $1$-dimensional standard Brownian motion.
\begin{eqnarray}
&& \int_{z\in D_\eps \cup \{a^*\}, |z|_\rho<1}\frac{1}{\sqrt{t-s}}e^{-\alpha|x-z|^2/(t-s)}\frac{1}{\sqrt{s}}e^{-\alpha(|z|_\rho^2+|y|^2)/s}m_p(dz) \nonumber
\\
&\le & e^{-\alpha |y|^2/t}\int_{z\in D_\eps \cup \{a^*\}, |z|_\rho<1} \frac{1}{\sqrt{t-s}}e^{-\alpha(|x|-|z|)^2/(t-s)}\frac{1}{\sqrt{s}}e^{-\alpha |z|^2_\rho/s}m_p(dz) \nonumber
\\
&\stackrel{r=|z|}{=}&e^{-\alpha |y|^2/t}\int_{\eps}^{1+\eps} \frac{r}{\sqrt{t-s}}e^{-\alpha (|x|-r)^2/(t-s)}\frac{1}{\sqrt{s}}e^{-\alpha (r-\eps)^2/s} \,dr \nonumber
\\
&\lesssim &e^{-\alpha |y|^2/t}\int_{\eps}^{1+\eps} \frac{1}{\sqrt{t-s}}e^{-\alpha (|x|-r)^2/(t-s)}\frac{1}{\sqrt{s}}e^{-\alpha (r-\eps)^2/s} dr \nonumber
\\
&\le  & e^{-\alpha |y|^2/t}\int_0^\infty  \frac{\pi }{\alpha } \,p_1\left(\frac{t-s}{2\alpha},|x|, r\right) p_1\left(\frac{s}{2\alpha}, r, \eps \right) dr \nonumber
\\
&\lesssim & e^{-\alpha |y|^2/t}\, p_1\left(\frac{t}{2\alpha}, |x|, \eps\right)   \lesssim \frac{1}{\sqrt{t}}e^{-\alpha [|y|^2+(|x|-\eps)^2]/t} = \frac{1}{\sqrt{t}}e^{-\alpha (|x|_\rho^2+|y|^2)/t}. \label{608}
\end{eqnarray}
With   \eqref{608}, we choose $q'_2,r_2>0$ satisfying $1/p_2+1/q'_2+1/r_2=1$. Since  $p_2\in (2, \infty]$, we may pick $q'_2$ sufficiently large so that $1/(2q'_2)+1/p_2<1/2$.  It then follows that 
\begin{align}
{\rm (II)}&= \int_{0}^t\frac{1}{\sqrt{s}}\int_{z\in D_\eps\cup \{a^*\}, |z|_\rho< 1}\frac{1}{t-s}e^{-2\alpha|x-z|^2/(t-s)}b(z)p^0_{1,\beta}(s,z,y)m_p(dz)ds \nonumber
\\
&= \int_0^t \frac{1}{\sqrt{s}}\int_{ |z|_\rho<1}\frac{1}{t-s}b(z)e^{-2\alpha|x-z|^2/(t-s)}\frac{1}{\sqrt{s}}e^{-\beta(|y|^2+|z|_\rho^2)/s}m_p(dz)ds \nonumber
\\
\nonumber &\le \int_0^t \frac{1}{s^{1-1/(2q'_2)}}\frac{1}{(t-s)^{1-1/(2q'_2)-1/r_2}}\int_{ |z|_\rho<1}\frac{1}{s^{1/(2q'_2)}}e^{-\alpha(|y|^2+|z|_\rho^2)/s}\frac{1}{(t-s)^{1/(2q'_2)}}e^{-\alpha |x-z|^2/(t-s)}
\\
&\qquad \cdot \frac{b(z)}{(t-s)^{1/r_2}}e^{-\alpha |x-z|^2/(t-s)}m_p(dz)ds \nonumber
\\
\nonumber &\lesssim \int_0^t \frac{1}{s^{1-1/(2q'_2)}}\frac{\|b_2\|_{p_2}}{(t-s)^{1-1/(2q'_2)-1/r_2}} \left(\int_{ |z|_\rho<1}\frac{1}{\sqrt{s}}e^{-\frac{q'_2\alpha(|y|^2+|z|_\rho^2)}{s}}\frac{1}{\sqrt{t-s}}e^{-\frac{q'_2\alpha |x-z|^2}{t-s}}m_p(dz)\right)^{1/q'_2}\nonumber
\\
&\quad \cdot \left(\int_{ |z|_\rho<1} \frac{1}{t-s}e^{-\alpha r_2|x-z|^2/(t-s)} m_p(dz)\right)^{1/r_2} ds \nonumber \stackrel{\eqref{608}}{\lesssim}\frac{\|b_2\|_{p_2} }{t^{1-1/(q'_2)-1/r_2}}\frac{1}{t^{1/(2q'_2)}}e^{-\alpha (|x|_\rho^2+|y|^2)/t}
\\
&=\frac{\|b_2\|_{p_2}}{t^{1/p_2+1/(2q'_2)}}e^{-\alpha(|x|_\rho^2+|y|^2)/t} =\|b_2\|_{p_2}t^{1/2-1/p_2-1/(2q'_2)}p^0_{1, \alpha}(t,x,y),\label{eq:3.36}
\end{align}
where it has also been used in the last $``\lesssim"$ that 
\begin{align*}
\int_{ |z|_\rho<1} \frac{1}{t-s}e^{-\alpha r_2|x-z|^2/(t-s)} m_p(dz)&\lesssim  \int_{ \IR^2} \frac{1}{t-s}e^{-\alpha r_2|x-z|^2/(t-s)} dz=\frac{\pi}{\alpha r_2}.
\end{align*}
In view of \eqref{eq:3.40}, adding up  \eqref{510} and \eqref{eq:3.36} yields for $0<\alpha<\beta/2$ that
\begin{align}\label{conclusion-(II)}
\int_{0}^t\frac{1}{\sqrt{s}}\int_{z\in D_\eps\cup \{a^*\}, |z|_\rho<1}p^0_{1,\alpha} (t-s, x,z)&b(z)p^0_{1,\beta}(s, z,y)m_p(dz)ds  \nonumber
\\
&\le \|b_2\|_{p_2}(t^{1/(2q_2)}+t^{1/2-1/(2q_2')-1/p_2})p^0_{1, \alpha}(t,x,y).
\end{align}
Finally, combining Cases 1-3  shows for $0<\alpha<\beta/2$ that
 \begin{align*}
\int_{0}^t\frac{1}{\sqrt{s}}&\int_{E}p^0_{1,\alpha} (t-s, x,z)b(z)p^0_{1,\beta}(s, z,y)m_p(dz)ds 
\\
&\lesssim \left(\|b_1\|_{p_1}+\|b_2\|_{p_2} \right)\left(t^{1/(2q_1)}+t^{1/2-1/p_2}+ t^{1/(2q_2)}+t^{1/2-1/(2q_2')-1/p_2}  \right)p^0_{1, \alpha}(t,x,y).
\end{align*}
 The second conclusion of this proposition follows readily from Proposition \ref{P:3.1}. 
 \end{proof}

For the next case which is $x\in D_\eps$ with $|x|_\rho>1$ and $y\in \IR_+$, we rewrite the canonical form of the heat kernel of $X^0$ for computation convenience as follows : 
\begin{equation}\label{canonical-form2}
p^0_{2,\alpha}(t,x,y):=\left\{
  \begin{aligned}
   & \frac{1}{\sqrt{t}}e^{-\alpha |x-y|^2/t},\quad 
 x\in \IR_+,y\in \IR_+;&
\\ 
& \frac{1}{\sqrt{t}}e^{-\alpha (|x|^2+|y|_\rho^2)/t},\quad
x\in \IR_+,y\in D_\eps\cup \{a^*\};& 
\\ 
& \frac{1}{\sqrt{t}}e^{-\alpha (|x|_\rho^2+|y|^2)/t},\quad
x\in D_\eps \cup \{a^*\}, y\in \IR_+;& 
\\
 & \frac{1}{t}e^{-2\alpha |x-y|^2/t},\quad   x,y\in D_\eps\cup \{a^*\} \text{ with } \max\{|x|_\rho,|y|_\rho\}> 1;& 
\\ 
&   \frac{1}{\sqrt{t}}e^{-\alpha(|x|^2_\rho+|y|_\rho^2)/t}+\frac{1}{t}\left(1\wedge \frac{|x|_\rho}{\sqrt{t}}\right)\left(1\wedge \frac{|y|_\rho}{\sqrt{t}}\right)e^{-\alpha |x-y|^2/t}, \text{ otherwise}. & 
  \end{aligned}
\right.
\end{equation}
The difference between $p^0_{0, \alpha}(t,x,y)$ and $p^0_{2,\alpha}(t,x,y)$ is that for the case $x,y\in D_\eps \cup \{a^*\}$ with $\max\{|x|_\rho, |y|_\rho\}>1$, the coefficient $\alpha$ on the  exponential term is replaced with $2\alpha$. As before, the following lemma is immediate.
\begin{lem}
 There exist constants $C_{11}, C_{12}>0$ and $\alpha_6, \alpha_7>0$ such that 
\begin{equation*}
C_{11}p^0_{2,\alpha_6}(t,x,y)\le p(t,x,y)\le C_{12}\;p^0_{2,\alpha_7}(t,x,y), \qquad (t,x,y)\in (0,T]\times E\times E.
\end{equation*}
\end{lem}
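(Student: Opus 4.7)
The strategy is to compare the canonical form $p^0_{2,\alpha}$ to $p^0_{0,\alpha}$ case by case, noting that the two expressions coincide except in the single regime $x,y\in D_\eps\cup\{a^*\}$ with $\max\{|x|_\rho,|y|_\rho\}>1$, where $p^0_{2,\alpha}$ replaces the exponent $-\alpha|x-y|^2/t$ by $-2\alpha|x-y|^2/t$. In every other regime the required two-sided bound $p(t,x,y)\asymp p^0_{2,\alpha}(t,x,y)$ follows immediately from the already-recorded estimate \eqref{RMK:1.6}, simply by taking $\alpha_6$ sufficiently small and $\alpha_7$ sufficiently large relative to the constants $\alpha_1,\alpha_2$ appearing there.

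The only substantive case is therefore the ``far-field'' regime $x,y\in D_\eps\cup\{a^*\}$ with $\max\{|x|_\rho,|y|_\rho\}>1$. For this, I would directly invoke Theorem \ref{smalltime}(ii), which provides
\[
\frac{C_{11}}{t}e^{-C_{12}\rho(x,y)^2/t}\le p(t,x,y)\le \frac{C_{13}}{t}e^{-C_{14}\rho(x,y)^2/t},
\]
and then rewrite the exponent using the geometric comparability $\rho(x,y)\asymp|x-y|$ that holds precisely in this regime (as noted in the remark following Theorem \ref{smalltime}). Concretely, one always has $\rho(x,y)\le|x-y|$ from \eqref{e:1.1}, and a reverse bound $\rho(x,y)\ge c_0|x-y|$ for some $c_0>0$ follows from an elementary argument: if $|x-y|\ge 3\eps$ then $\rho(x,y)\ge|x-y|-2\eps\ge|x-y|/3$, while if $|x-y|<3\eps\le 3/4$ then the assumption $\max\{|x|_\rho,|y|_\rho\}>1$ forces $|x|_\rho+|y|_\rho>1>(2/3)|x-y|$, so again $\rho(x,y)\ge c_0|x-y|$ (this is exactly the estimate already carried out in the proof of Proposition \ref{P:3.2}(i)).

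Substituting $c_0|x-y|\le\rho(x,y)\le|x-y|$ into the Gaussian exponents converts the bounds into
\[
\frac{C_{11}}{t}e^{-C_{12}|x-y|^2/t}\le p(t,x,y)\le \frac{C_{13}}{t}e^{-C_{14}c_0^2|x-y|^2/t},
\]
which matches the definition of $p^0_{2,\alpha}$ in \eqref{canonical-form2} upon selecting $2\alpha_6=C_{12}$ and $2\alpha_7=C_{14}c_0^2$.

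Finally, I would collect the constants across all five regimes in \eqref{canonical-form2} and choose $\alpha_6$ small enough and $\alpha_7$ large enough simultaneously to satisfy each comparison, adjusting the prefactors $C_{11},C_{12}$ accordingly. There is no real obstacle here; the lemma is essentially a bookkeeping restatement of Theorem \ref{smalltime} plus the geometric fact $\rho\asymp|\cdot-\cdot|$ in the far-field regime on $D_\eps$, introduced purely for the computational convenience that will be exploited in the subsequent gradient/Duhamel estimates (analogous to Proposition \ref{P:3.6}).
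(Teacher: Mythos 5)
Your proposal is correct in substance and follows essentially the same route as the paper, which simply declares the lemma immediate from Theorem \ref{smalltime} (equivalently \eqref{RMK:1.6}) together with the comparability $\rho(x,y)\asymp|x-y|$ on $D_\eps\cup\{a^*\}$ when $\max\{|x|_\rho,|y|_\rho\}>1$; your far-field computation, including the choice $2\alpha_6=C_{12}$, $2\alpha_7=C_{14}c_0^2$, is exactly this bookkeeping. One small correction: since $p^0_{2,\alpha}$ is nonincreasing in $\alpha$, the lower-bound parameter $\alpha_6$ must be taken sufficiently \emph{large} (e.g.\ at least the lower-bound exponent in \eqref{RMK:1.6}) and the upper-bound parameter $\alpha_7$ sufficiently \emph{small} (e.g.\ half the upper-bound exponent there), which is the opposite of the direction you state twice in passing; your explicit constant selections are consistent with the correct direction, so this is a wording slip rather than a gap in the argument.
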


\begin{prop}\label{P:3.8}
It holds for any $\alpha, \beta$ satisfying $0<\alpha<\beta/2$ that
\begin{align}
\int_0^t \frac{1}{\sqrt{s}}\int_{E}p^0_{2,\alpha}(t-s, x,z)b(z)p^0_{2,\beta}(s, z,y)& m_p(dz)ds \le C_{13}(t)p^0_{2,\alpha}(t,x,y), \nonumber
\\
&\quad  0<s<t\le T,\, x\in D_\eps, |x|_\rho> 1, y \in \IR_+,\label{eq:3.38}
\end{align}
where $C_{13}(t)$ is a positive non-decreasing function in $t$  \emph{(}possibly depending on $\alpha$ and $\beta$\emph{)} and $C_{13}(t)\rightarrow 0$ as $t\rightarrow 0$.  In particular, there exists some $\alpha_8>0$ such that for each $0<\alpha<\alpha_8$, 
\begin{eqnarray}
\int_0^t \int_{z\in E}p^0_{2,\alpha}(t-s,x,z)|b(z)|\left|\nabla_z p(s, z, y )\right| m_p(dz)ds\le C_{14}(t)p^0_{2,\alpha}(t,x,y)\nonumber
\\
 0<s<t\le T, x\in D_\eps, |x|_\rho> 1, y\in \IR_+,\label{eq:3.39}
\end{eqnarray}
where $C_{14}(t)$ is a positive non-decreasing function in $t$  \emph{(}possibly depending on $\alpha$\emph{)} and $C_{14}(t)\rightarrow 0$ as $t\rightarrow 0$.
\end{prop}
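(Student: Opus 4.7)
The plan is to mirror the structure of Proposition \ref{P:3.6}, splitting the $z$-integral in \eqref{eq:3.38} into the three regions $\{z\in\IR_+\}$, $\{z\in D_\eps:\,|z|_\rho>1\}$, and $\{z\in D_\eps\cup\{a^*\}:\,|z|_\rho\le 1\}$, and bounding each piece separately by H\"older's inequality combined with a triangle-type Gaussian inequality. Without loss of generality I assume $b\ge 0$. The second conclusion \eqref{eq:3.39} follows from \eqref{eq:3.38} by exactly the same reduction used at the start of the proof of Proposition \ref{P:3.6}: the gradient bounds in Propositions \ref{P:3.1}--\ref{P:3.2} continue to hold with $p^0_{0,\beta_i}$ replaced by $p^0_{2,\beta_i/2}$, so the extra $s^{-1/2}$ factor is absorbed into $p^0_{2,\beta/2}(s,z,y)$ and one reduces to \eqref{eq:3.38}.

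In the region $z\in\IR_+$ the two canonical densities take the forms $p^0_{2,\alpha}(t-s,x,z)=(t-s)^{-1/2}e^{-\alpha(|x|_\rho^2+|z|^2)/(t-s)}$ and $p^0_{2,\beta}(s,z,y)=s^{-1/2}e^{-\beta|z-y|^2/s}$; I use the one-dimensional triangle inequality $|z|^2/(t-s)+|z-y|^2/s\ge |y|^2/t$ together with $|x|_\rho^2/(t-s)\ge |x|_\rho^2/t$ to extract $e^{-\alpha(|x|_\rho^2+|y|^2)/t}$, and H\"older's inequality against $b_1\in L^{p_1}(\IR_+)$ with conjugate $q_1$ to integrate the residual Gaussian $s^{-1/(2q_1)}e^{-(\beta-\alpha)|z-y|^2/s}$, obtaining the analogue of \eqref{102}, namely $\lesssim\|b_1\|_{p_1}t^{1/(2q_1)}p^0_{2,\alpha}(t,x,y)$. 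In the region $z\in D_\eps$ with $|z|_\rho>1$, both canonical densities are in ``high'' form, $p^0_{2,\alpha}(t-s,x,z)=(t-s)^{-1}e^{-2\alpha|x-z|^2/(t-s)}$ and $p^0_{2,\beta}(s,z,y)=s^{-1/2}e^{-\beta(|z|_\rho^2+|y|^2)/s}$; since $\alpha<\beta/2$ and $\eps\le 1/4<|z|_\rho$, the elementary inequality \eqref{252} yields $\beta|z|_\rho^2\ge\alpha|z|^2$, which lets me replace the geodesic exponential by a Euclidean one. H\"older against $b_2\in L^{p_2}(D_\eps)$ with conjugate $q_2$ followed by the two-dimensional Brownian semigroup identity \eqref{semigroup-2-d-BM} then closes the $z$-integral and produces $\lesssim\|b_2\|_{p_2}t^{1/2-1/p_2}p^0_{2,\alpha}(t,x,y)$.

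The main obstacle is the region $z\in D_\eps\cup\{a^*\}$ with $|z|_\rho\le 1$: here $p^0_{2,\alpha}(t-s,x,z)$ still carries the Euclidean exponential $e^{-2\alpha|x-z|^2/(t-s)}$ (because $|x|_\rho>1$), whereas $p^0_{2,\beta}(s,z,y)$ carries the radial exponential $e^{-\beta|z|_\rho^2/s}$, so the bridge between the Euclidean distance $|x-z|$ and the geodesic distance $|z|_\rho$ has to be supplied by hand. The key elementary observation is
\begin{equation*}
|x|_\rho=|x|-\eps\le |x-z|+|z|-\eps=|x-z|+|z|_\rho,
\end{equation*}
so the Cauchy--Schwarz (or AM--GM) identity $a^2/(t-s)+b^2/s\ge (a+b)^2/t$ gives
\begin{equation*}
\frac{|x-z|^2}{t-s}+\frac{|z|_\rho^2}{s}\ge\frac{|x|_\rho^2}{t}.
\end{equation*}
Splitting the coefficient $2\alpha$ of the first exponential as $\alpha+\alpha$ and exploiting $\alpha<\beta/2$, I extract $e^{-\alpha|x|_\rho^2/t}\cdot e^{-\alpha|y|^2/t}=\sqrt{t}\,p^0_{2,\alpha}(t,x,y)$, leaving a residual integrand of the form $(t-s)^{-1}s^{-1/2}b(z)e^{-\alpha|x-z|^2/(t-s)}e^{-(\beta-\alpha)|z|_\rho^2/s}$. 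H\"older against $b_2\in L^{p_2}$ (with a second H\"older splitting $1/p_2+1/q_2'+1/r_2=1$ to absorb the $(t-s)^{-1}$ factor, mirroring \eqref{eq:3.36}) bounds the $z$-integral by enlarging $D_\eps$ to $\IR^2$ and applying the two-dimensional semigroup computation of the same shape as \eqref{semigroup-2-d-BM}; the resulting time integral converges with a strictly positive power of $t$, yielding $\lesssim\|b_2\|_{p_2}\bigl(t^{1/(2q_2)}+t^{1/2-1/p_2-1/(2q_2')}\bigr)p^0_{2,\alpha}(t,x,y)$. Summing the three regions produces \eqref{eq:3.38} with $C_{13}(t)\to 0$ as $t\downarrow 0$, and \eqref{eq:3.39} follows by the reduction noted at the outset.
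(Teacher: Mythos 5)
Your decomposition and the bounds you claim are essentially the paper's own proof, which for this proposition simply re-runs Case 1 of Proposition \ref{P:3.6} for $z\in\IR_+$ and the computations \eqref{eq:3.32} and \eqref{eq:3.36} for $z\in D_\eps\cup\{a^*\}$ split at $|z|_\rho=1$, and reduces \eqref{eq:3.39} to \eqref{eq:3.38} exactly as you do. The one place you genuinely deviate is the region $|z|_\rho\le 1$: instead of recovering the factor $e^{-\alpha(|x|_\rho^2+|y|^2)/t}$ from the radial convolution bound \eqref{608} inside the H\"older factor, as the paper does in \eqref{eq:3.36}, you extract it pointwise via $|x|_\rho\le|x-z|+|z|_\rho$. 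That extraction is correct and even simplifies the rest, but the finishing device you cite there---enlarging $D_\eps$ to $\IR^2$ and applying a computation ``of the same shape as'' \eqref{semigroup-2-d-BM}---does not apply to the leftover factor $e^{-(\beta-\alpha)|z|_\rho^2/s}$: this is a Gaussian in the geodesic distance $|z|_\rho=|z|-\eps$, identically $1$ on the whole circle $\{|z|=\eps\}$, so it is not dominated by any Euclidean Gaussian $e^{-c|z-w|^2/s}$ uniformly as $s\downarrow 0$, and the two-dimensional Chapman--Kolmogorov identity cannot be invoked. The repair is immediate: after your pointwise extraction, use H\"older with $1/p_2+1/q_2'+1/r_2=1$ and $q_2'$ large enough that $1/(2q_2')+1/p_2<1/2$ (the paper's choice), together with the crude bounds $\int_{\{|z|_\rho<1\}}e^{-cq_2'|z|_\rho^2/s}m_p(dz)\lesssim\sqrt{s}$ (polar coordinates, as in \eqref{integral-1-d-BM-density}) and $\int_{\IR^2}e^{-cr_2|x-z|^2/(t-s)}dz\lesssim t-s$; the time integral $\int_0^t s^{-1+1/(2q_2')}(t-s)^{-1+1/r_2}ds\asymp t^{-1+1/(2q_2')+1/r_2}$ then yields exactly your claimed exponent $t^{1/2-1/p_2-1/(2q_2')}$ relative to $p^0_{2,\alpha}(t,x,y)$ (alternatively, just keep the paper's \eqref{608}). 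Two minor bookkeeping points: with the outer $s^{-1/2}$ the residual time density in that region is $s^{-1}(t-s)^{-1}$, not $s^{-1/2}(t-s)^{-1}$---it is precisely the $(t-s)^{-1}$ that forces the three-exponent H\"older---and the inequality you attribute to \eqref{252} is its quadratic form $\beta|z|_\rho^2\ge\alpha|z|^2$, which indeed holds on $\{|z|_\rho\ge1\}$ when $\alpha<\beta/2$ and $\eps\le 1/4$. With these repairs your argument closes and gives the same $C_{13}(t)$, $C_{14}(t)$ as the paper.
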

\begin{proof}  We observe that \eqref{eq:3.38} implies \eqref{eq:3.39} because Proposition \ref{P:3.1} and Proposition \ref{P:3.2} both hold with $p^0_{\beta_i}$ being replaced with $p^0_{2, \beta_i/2}$, $i=1, 2$. Again \eqref{eq:3.38} will be established by dividing our discussion into several cases depending on the position of $z$. Without loss of generality, we assume $b\ge 0$.\\
{\it Case 1. }$z\in \IR_+$. Following  the exact same computation as that  for Case 1 of Proposition \ref{P:3.6}, we can show for $q_1$ satisfying $1/p_1+1/q_1=1$ that
\begin{align*}
\int_{0}^t \frac{1}{\sqrt{s}}\int_{\IR_+}p^0_{1,\alpha} (t-s, x,z)b(z)p^0_{1,\beta}(s, z,y)m_p(dz)ds \lesssim \|b_1\|_{p_1}t^{1/(2q_1)}p^0_{1, \alpha}(t,x,y).
\end{align*}
{\it Case 2. }$z\in D_\eps\cup \{a^*\}$. For this case, we choose $q_2,r_2>0$ satisfying $1/p_2+1/q_2+1/r_2=1$. Since $p_2\in (2, \infty]$, we may pick $q_2$ sufficiently large such that $1/(2q_2)+1/p_2<1/2$.  Dividing the integral into two parts: $\{|z|_\rho>1\}$ and $\{|z|_\rho\le 1\}$, then  following the same computation as that for  \eqref{eq:3.32} and \eqref{eq:3.36} in  the proof to Proposition \ref{P:3.6}, one can show that
\begin{eqnarray}
&& \int_{0}^t\frac{1}{\sqrt{s}}\int_{D_\eps\cup \{a^*\}}p^0_{2,\alpha} (t-s, x,z)b(z) p^0_{2,\beta}(s,z,y)m_p(dz)ds\nonumber
\\
&=&  \int_0^t \frac{1}{\sqrt{s}}\int_{D_\eps\cup \{a^*\}}\frac{1}{t-s}b(z)e^{-2\alpha|x-z|^2/(t-s)}\frac{1}{\sqrt{s}}e^{-\beta(|y|^2+|z|_\rho^2)/s}m_p(dz)ds \nonumber
\\
&\lesssim &  \|b_2\|_{p_2} t^{1/2-1/p_2-1/(2q_2)}\left(1+t^{1/(2q_2)}\right)p^0_{2, \alpha}(t,x,y) \nonumber
\\
&\stackrel{t\le T}{\le }& \|b_2\|_{p_2} t^{1/2-1/p_2-1/(2q_2)}p^0_{2, \alpha}(t,x,y).\label{eq:3.41}
\end{eqnarray}
The details of  the computation  to the $``\lesssim"$ of  \eqref{eq:3.41}  are omitted here. Combining  Cases  1-2  yields that 
\begin{align*}
\int_{0}^t\frac{1}{\sqrt{s}}\int_{E}p^0_{2,\alpha} (t-s, x,z)b(z) &p^0_{2,\beta}(s,z,y)m_p(dz)ds 
\\
&\lesssim  \left(\|b_1\|_{p_1}+\|b_2\|_{p_2}\right)\left(t^{1/(2q_1)}+t^{1/2-1/p_2-1/(2q_2)}\right)p^0_{2, \alpha}(t,x,y).
\end{align*}
Again the second conclusion of this proposition follows from Proposition \ref{P:3.1}.
\end{proof}
For  $\beta_1, \beta_2>0$ be chosen as in Proposition \ref{P:3.1} and Proposition \ref{P:3.2}, we let
\begin{equation}\label{def-beta3}
\beta_3=(\beta_1\wedge \beta_2)/16 
\end{equation}
be fixed for the rest of this section.  For the rest of this section, we set a  constant $M=48$.  When $|y|_\rho\ge M=48$, noting that $\eps\in (0, 1/4)$, by elementary algebra one can show 
\begin{equation}\label{e:3.52r}
\frac{1}{4}\left(|y|_\rho^2-8|y|_\rho\right)\ge \frac{1}{4}\left(16|y|_\rho+12\right)\ge \frac{1}{4}\left(16|y|_\rho+48\eps\right)=\frac{1}{4}\left(16|y|+32\eps\right)=4|y|+8\eps.
\end{equation}
In particular, when $z\in D_\eps$, $|z|_\rho<4$, and $|y|_\rho\ge M=48$,
\begin{equation}\label{choice-of-M}
\frac{1}{4}|z-y|^2\ge \frac{1}{4}\left(|y|_\rho^2-2|z|_\rho |y|_\rho +|z|_\rho^2\right)\ge \frac{1}{4}\left(|y|_\rho^2-8|y|_\rho\right)\stackrel{\eqref{e:3.52r}}{\ge } 4|y|+\eps.
\end{equation}

%\begin{equation}
%|z-y|^2/4 \ge 8\eps +4|y|, \quad \text{for all }z\in D_\eps\cup \{a^*\}, \,|z|_\rho<4,\, |y|_\rho>M.
%\end{equation}
%In particular, observing that $\eps\le 1/4$, this means that 
%$$
 %|y|_\rho^2/4 \ge 8 \eps +4|y|, \quad \text{for all } |y|_\rho>M. 
%$$
For the next proposition, we let the canonical form be as follows:
\begin{equation}\label{canonical-form2}
p^0_{3,\alpha}(t,x,y):=\left\{
  \begin{aligned}
   & \frac{1}{\sqrt{t}}e^{-\alpha |x-y|^2/t} ,\quad
 x\in \IR_+,y\in \IR_+;&
\\ 
& \frac{1}{\sqrt{t}}e^{-4\alpha (|x|^2+|y|_\rho^2)/t} ,\quad
x\in \IR_+,y\in D_\eps \cup \{a^*\} ;& 
\\ 
& \frac{1}{\sqrt{t}}e^{-4\alpha (|x|_\rho^2+|y|^2)/t},\quad
x\in D_\eps \cup \{a^*\}, y\in \IR_+;& 
\\ 
 & \frac{1}{t}e^{-\alpha |x-y|^2/t}, \quad  x,y\in D_\eps \cup \{a^*\} \text{ with } \max\{|x|_\rho,|y|_\rho\}> 1;& 
\\ 
&   \frac{1}{\sqrt{t}}e^{-\alpha(|x|^2_\rho+|y|_\rho^2)/t}+\frac{1}{t}\left(1\wedge \frac{|x|_\rho}{\sqrt{t}}\right)\left(1\wedge \frac{|y|_\rho}{\sqrt{t}}\right)e^{-\alpha |x-y|^2/t}, \text{ otherwise}. & 
  \end{aligned}
\right.
\end{equation}
Note that the difference between $p^0_{3, \alpha}(t,x,y)$ and $p^0_{0,\alpha}(t,x,y)$ is that for the case when one of $x$ and $y$ is in $\IR_+$ and the other variable is in $D_\eps \cup \{a^*\}$,  the constant involved in the exponential term is $4\alpha$ instead of $\alpha$. The next lemma follows immediately. 
\begin{lem}
 There exists constants $C_{15}, C_{16}>0$, $\alpha_9, \alpha_{10}>0$ such that it holds  that
\begin{equation*}
C_{15}\,p^0_{3,\alpha_9}(t,x,y)\le p(t,x,y)\le C_{16}\,p^0_{3,\alpha_{10}}(t,x,y), \quad t\in (0, T],\,(x,y)\in  E\times E.
\end{equation*}
\end{lem}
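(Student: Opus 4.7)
The key observation is that the canonical form $p^0_{3,\alpha}$ coincides with $p^0_{0,\alpha}$ in every case except the two ``cross'' cases where one argument lies in $\IR_+$ and the other in $D_\eps\cup\{a^*\}$, and in those cross cases the only difference is that the coefficient $\alpha$ multiplying $(|x|^2+|y|_\rho^2)/t$ (or $(|x|_\rho^2+|y|^2)/t$) is replaced by $4\alpha$. Consequently, the entire lemma is a purely algebraic consequence of Remark \ref{RMK:1.6} (which in turn is just Theorem \ref{smalltime} rewritten in the $p^0_{0,\cdot}$ notation) once we match up the constants correctly.

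The plan is as follows. First I would invoke Remark \ref{RMK:1.6} to fix positive constants $\alpha_1,\alpha_2$ and a constant $C>0$ such that
\begin{equation*}
C^{-1}\,p^0_{0,\alpha_1}(t,x,y)\le p(t,x,y)\le C\,p^0_{0,\alpha_2}(t,x,y),\qquad 0<t\le T,\ x,y\in E.
\end{equation*}
Then I would set $\alpha_9:=\alpha_1$ and $\alpha_{10}:=\alpha_2/4$, and verify the pointwise comparisons $p^0_{3,\alpha_9}\lesssim p^0_{0,\alpha_1}$ and $p^0_{0,\alpha_2}\lesssim p^0_{3,\alpha_{10}}$ case by case according to the five branches in the definitions \eqref{e:1.3} and \eqref{canonical-form2} (using the same-branch notation).

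For the non-cross branches (both arguments in $\IR_+$; both in $D_\eps\cup\{a^*\}$ with one radial coordinate exceeding $1$; both in $D_\eps\cup\{a^*\}$ with small radial coordinates), the formulas for $p^0_{3,\alpha}$ and $p^0_{0,\alpha}$ are identical, so the two comparisons reduce to the elementary monotonicity $\alpha\mapsto e^{-\alpha R^2/t}$ being decreasing: $\alpha_9=\alpha_1$ gives the upper comparison, and $\alpha_{10}=\alpha_2/4\le \alpha_2$ gives the lower one. For the two cross branches, where the exponential coefficients in $p^0_{3,\alpha}$ are $4\alpha$ against $\alpha_1$ or $\alpha_2$ in $p^0_{0,\cdot}$, the choices $4\alpha_9=4\alpha_1\ge\alpha_1$ and $4\alpha_{10}=\alpha_2$ again make the comparisons trivial. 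Chaining these inequalities with the bounds from Remark \ref{RMK:1.6} and absorbing multiplicative constants into $C_{15}$ and $C_{16}$ yields the desired two-sided bound.

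I do not anticipate any real obstacle: there are no new analytic ingredients, and no issue arises at $a^*$ since $|y|_\rho=0$ at $a^*$ means $p^0_{0,\alpha}$ and $p^0_{3,\alpha}$ agree there anyway. The only thing to be slightly careful about is bookkeeping across the five branches of the piecewise definition, and the verification that the choices $\alpha_9=\alpha_1$, $\alpha_{10}=\alpha_2/4$ simultaneously work in every branch, which is exactly what the factor-of-$4$ gap in the cross-case exponents is designed to accommodate.
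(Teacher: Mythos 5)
Your proposal is correct and is exactly the argument the paper has in mind: the paper simply declares the lemma "immediate" from the two-sided comparison of $p(t,x,y)$ with $p^0_{0,\cdot}$ (Remark \ref{RMK:1.6}) together with the observation that $p^0_{3,\alpha}$ differs from $p^0_{0,\alpha}$ only by the factor $4$ in the exponents of the two cross branches. Your explicit choices $\alpha_9=\alpha_1$, $\alpha_{10}=\alpha_2/4$ and the branch-by-branch monotonicity check fill in precisely that omitted bookkeeping, with no gap.
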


\begin{prop}\label{1100}
 Let $M=48$  so that \eqref{choice-of-M} holds.
It holds for  any $\alpha, \beta$ satisfying $0<\alpha<\beta/4$ that  there exists some function  $C_{17}(t)>0$ \emph{(}possibly depending on $\alpha$ and $\beta$\emph{)}  which  is non-decreasing in $t$ with  $C_{17}(t)\rightarrow 0$ as $t \rightarrow 0$, such that
\begin{align}
\int_0^t \frac{1}{\sqrt{s}}\int_{E}p^0_{3, \alpha}(t-s, x,z)b(z)&p^0_{3,\beta}(s, z,y)m_p(dz)ds \le C_{17}(t)p^0_{3, \alpha}(t,x,y), \nonumber
\\
&\quad 0<s<t\le T,\, x,y\in D_\eps \cup \{a^*\}, \text{ and } |y|_\rho>M,\label{611}
\end{align}
In particular,
 there exists some $\alpha_{11}>0$ such that for each $0<\alpha<\alpha_{11}$,  there exists some $C_{18}(t)>0$ \emph{(}possibly depending on $\alpha$\emph{)} which  is a positive   non-decreasing  function in $t$, with $C_{18}(t)\rightarrow 0$ when $t\rightarrow 0$, such that
\begin{align}
\int_0^t \int_{z\in E}p^0_{3, \alpha}(t-s,x,z)&|b(z)|\left|\nabla_z p(s, z, y )\right| dzds\le C_{18}(t)p^0_{3, \alpha}(t,x,y), \nonumber
\\
 & 0<s<t\le T,\, x,y\in D_\eps \cup \{a^*\},\, \text{and }|y|_\rho>M.\label{858}
\end{align}
\end{prop}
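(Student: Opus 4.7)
My plan is to follow the template of the preceding propositions (Propositions \ref{121051}, \ref{1236}, \ref{P:3.6}, and \ref{P:3.8}): partition the domain of $z$ into pieces according to the geometry of the state space, apply H\"older's inequality in $z$ with exponents conjugate to those in the integrability of $b$, and extract the target factor $e^{-\alpha|x-y|^2/t}$ via a triangle-type inequality. The second assertion \eqref{858} is an immediate consequence of the first: Propositions \ref{P:3.1} and \ref{P:3.2}(i) give $|\nabla_z p(s,z,y)| \lesssim s^{-1/2}\, p^0_{0,\beta_i}(s,z,y)$ with $\beta_i\in\{\beta_1,\beta_2\}$ (case (ii) of Proposition \ref{P:3.2} is inapplicable since $|y|_\rho > M > 1$), and $p^0_{0,\beta_i} \leq p^0_{3,\beta_i/4}$ pointwise because the two canonical forms agree except in the $\IR_+ \leftrightarrow D_\eps$ cross exponentials, where the constant in $p^0_{3,\cdot}$ is four times larger; applying \eqref{611} with $\beta = (\beta_1 \wedge \beta_2)/4$ then demands $\alpha < (\beta_1 \wedge \beta_2)/16 = \beta_3$, identifying $\alpha_{11} = \beta_3$ from \eqref{def-beta3}.

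For \eqref{611} I split into three pieces: (A) $z \in \IR_+$; (B) $z \in D_\eps \cup \{a^*\}$ with $|z|_\rho \geq 4$; (C) $z \in D_\eps \cup \{a^*\}$ with $|z|_\rho < 4$. In (A) both kernels take the $\IR_+\leftrightarrow D_\eps$ form carrying the factor $4$ in the exponent; the AM-GM-type inequality $\tfrac{a^2}{u}+\tfrac{b^2}{v} \geq \tfrac{(a+b)^2}{u+v}$ applied to $(|x|_\rho, |y|_\rho, t-s, s)$, together with $|x-y|^2 \leq 4(|x|_\rho+|y|_\rho)^2$ (valid because $|y|_\rho \geq M = 48$ dominates the $2\eps$ correction in $|x-y|\leq|x|+|y|$), delivers $e^{-4\alpha|x|_\rho^2/(t-s)-4\beta|y|_\rho^2/s}\leq e^{-\alpha|x-y|^2/t}$ --- this is precisely where the extra factor $4$ built into $p^0_{3,\cdot}$ pays off. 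H\"older in $z$ with conjugate exponent $q_1$ to $p_1$ then bounds $\int |b_1|\,e^{-c|z|^2/s} p\,dz \lesssim \|b_1\|_{p_1}\,s^{1/(2q_1)}$, and a Beta-type $s$-integration $\int_0^t s^{1/(2q_1)-1}(t-s)^{-1/2}\,ds = C t^{1/(2q_1)-1/2}$ produces the contribution $C\|b_1\|_{p_1}\,t^{1/2+1/(2q_1)}\,p^0_{3,\alpha}(t,x,y)$, parallel to Proposition \ref{121051} Case 1. In (B), since $|y|_\rho,|z|_\rho > 1$, both kernels are in 2D Gaussian form; the usual triangle inequality $|x-z|^2/(t-s)+|z-y|^2/s \geq |x-y|^2/t$ extracts the target exponential, and H\"older with exponents $(p_2,q_2,r_2)$ satisfying $1/p_2+1/q_2+1/r_2=1$ (with $q_2$ chosen so that $1/(2q_2)+1/p_2 < 1/2$) combined with the Chapman--Kolmogorov identity \eqref{semigroup-2-d-BM} reproduces the argument of Proposition \ref{P:3.6} Case 2 and yields $C\|b_2\|_{p_2}\,t^{1/2-1/p_2-1/(2q_2)}\,p^0_{3,\alpha}(t,x,y)$.

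The principal obstacle is piece (C), where $|z|_\rho$ is bounded but $|y|_\rho$ is large: the integrand carries a prefactor $s^{-1/2}\cdot s^{-1}=s^{-3/2}$ that is formally non-integrable at $s=0$, and no simple triangle inequality alone produces the compensating $s$-decay. The geometric input is $|z-y| \geq |y|_\rho-|z|_\rho \geq |y|_\rho/2 \asymp |y|_\rho$ on this region. If $|x|_\rho>1$ (or $|z|_\rho>1$), the kernel $p^0_{3,\alpha}(t-s,x,z)$ is of 2D form and the argument of (B) applies verbatim over the $z$-domain $\{|z|_\rho<4\}$. If instead $|x|_\rho,|z|_\rho \leq 1$ (so $p^0_{3,\alpha}(t-s,x,z)$ is in its ``otherwise'' form), then $|x-y| \leq |x|_\rho+|y|_\rho+2\eps \lesssim |y|_\rho$, whence $|z-y|^2 \geq c|x-y|^2$ for an absolute constant $c$; the hypothesis $\alpha < \beta/4$ permits the split $e^{-\beta|z-y|^2/s} \leq e^{-\alpha|x-y|^2/t}\cdot e^{-c'|y|_\rho^2/s}$, and the residual factor $e^{-c'|y|_\rho^2/s}$, used via the elementary bound $e^{-a}\leq C_k a^{-k}$ with $k=1/2+\delta$, converts the $s^{-3/2}$ singularity into an integrable $s^{-1+\delta}$ and contributes an extra $t^\delta$ after $s$-integration. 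H\"older over the bounded region $\{|z|_\rho<4\}$ supplies the $\|b_2\|_{p_2}$ factor. Treating the two summands of the ``otherwise'' form of $p^0_{3,\alpha}(t-s,x,z)$ separately is the main source of bookkeeping, but the overall strategy is unchanged; summing the three pieces yields \eqref{611} with $C_{17}(t)\to 0$ as $t\to 0$.
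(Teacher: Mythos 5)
Your argument follows the paper's proof in all essentials: the same partition in $z$ ($z\in\IR_+$; $z\in D_\eps$ with $|z|_\rho\ge 4$; $|z|_\rho<4$, subdivided according to whether $p^0_{3,\alpha}(t-s,x,z)$ is in two-dimensional or ``otherwise'' form), H\"older against $\|b_1\|_{p_1}$ and $\|b_2\|_{p_2}$ with the Chapman--Kolmogorov bound \eqref{semigroup-2-d-BM} absorbing the $(t-s)^{-1}$ and $s^{-1}$ singularities, and the same reduction of \eqref{858} to \eqref{611} via Propositions \ref{P:3.1} and \ref{P:3.2}(i) together with $p^0_{0,\beta_i}\le p^0_{3,\beta_i/4}$, identifying $\alpha_{11}=\beta_3$. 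The one genuinely different ingredient is your treatment of the singular term in piece (C) (the paper's term (I) in \eqref{e:3.58}): the paper reconstructs $e^{-\alpha|x-y|^2/t}$ by splitting $e^{-\beta|z-y|^2/s}$ into three factors and invoking \eqref{eq:3.61}, \eqref{eq:3.62} and \eqref{104}, and it tames the $s^{-3/2}$ prefactor through the H\"older exponent ($q_2<2$ since $p_2>2$), whereas you extract $e^{-\alpha|x-y|^2/t}$ directly from $e^{-\beta|z-y|^2/s}$ via $|z-y|\gtrsim |x-y|$ and convert the leftover factor $e^{-c'|y|_\rho^2/s}$ into a power $s^{1/2+\delta}$ (legitimate because $|y|_\rho\ge 48$ is bounded below), so H\"older is needed only over the bounded region $\{|z|_\rho<4\}$; both mechanisms give a positive power of $t$ and hence $C_{17}(t)\to 0$. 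One caution on constants: your stated geometric inputs $|z-y|\ge |y|_\rho/2$ and $|x-y|\lesssim |y|_\rho$ only yield $|z-y|^2\ge c\,|x-y|^2$ with $c$ of order $1/9$, under which the split $e^{-\beta|z-y|^2/s}\le e^{-\alpha|x-y|^2/t}e^{-c'|y|_\rho^2/s}$ forces roughly $\alpha\lesssim \beta/9$, not the full claimed range $\alpha<\beta/4$. To cover the whole range you should use the sharper estimates available in this regime ($|z-y|\ge |y|_\rho-1$ and $|x-y|\le |y|_\rho+3/2$ when $|x|_\rho,|z|_\rho\le 1$, $\eps\le 1/4$, $|y|_\rho\ge 48$, so $|z-y|^2\ge (47/49.5)^2|x-y|^2$), after which writing $\beta=\theta\beta+(1-\theta)\beta$ with $\theta$ slightly above $\alpha/(0.9\beta)<1/3.6$ gives both the target exponential and a positive $c'$. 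This is a constant-tracking fix rather than a conceptual gap, but it should be made explicit since the proposition asserts the bound for every pair with $0<\alpha<\beta/4$.
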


\begin{proof}
We first note that \eqref{611} implies \eqref{858} because Proposition \ref{P:3.2} holds when the $p_{\beta _2}^0$ there is replaced with $p^0_{3, \beta_2/4}$. Again we divide our proof into three cases depending on the position of $z$.  Without loss of generality, we assume that $b\ge 0$. \\
{\it Case 1.} $z\in \IR_+$. Observe that for this case, since $|y|_\rho>M=48$ and $\eps <1/4$,
\begin{equation}\label{443}
4(|x|_\rho^2+|y|_\rho^2)\ge |x|_\rho^2+|y|_\rho^2+2|x|_\rho|y|_\rho+4\eps (|x|_\rho+|y|_\rho+\eps)=(|x|_\rho+|y|_\rho+2\eps)^2\geq |x-y|^2.
\end{equation}
With \eqref{443}, we have for $0<\alpha<\beta/4$ and $q_1: 1/p_1+1/q_1=1$ that
\begin{eqnarray}
&&\int_{0}^t \frac{1}{\sqrt{s}}\int_{\IR_+}p^0_{3, \alpha} (t-s, x,z)b(z)p^0_{3, \beta}(s, z,y)m_p(dz)ds \nonumber
\\
&= & \int_0^t \frac{1}{\sqrt{s}}\int_{\IR_+}\frac{1}{\sqrt{t-s}}e^{-4\alpha (|x|_\rho^2+|z|^2)/(t-s)}b(z)\frac{1}{\sqrt{s}}e^{-4\beta(|z|^2+|y|_\rho^2)/s}m_p(dz)ds \nonumber
\\
&\le & e^{-4\alpha (|x|_\rho^2+|y|^2_\rho)/t} \int_0^t s^{-1+1/(2q_1)}(t-s)^{-1/2}\int_{\IR_+}b(z)s^{-1/(2q_1)}e^{-4\beta|z|^2/s}m_p(dz)ds \nonumber
\\
&\le & e^{-4\alpha (|x|_\rho^2+|y|^2_\rho)/t}\int_0^t s^{-1+1/(2q_1)}(t-s)^{-1/2} \|b_{1}\|_{p_1} \left(\int_{\IR_+}\left(s^{-1/(2q_1)}e^{-4\beta|z|^2/s}\right)^{q_1} m_p(dz)\right)^{1/q_1}ds \nonumber
\\
&\lesssim & \|b_1\|_{p_1} e^{-4\alpha (|x|_\rho^2+|y|^2_\rho)/t}\cdot t^{1/(2q_1)-1/2} \nonumber
\\
& \stackrel{\eqref{443}}{\le } & \|b_1\|_{p_1} e^{-\alpha |x-y|^2/t}\cdot t^{1/(2q_1)-1/2}  \le \|b_1\|_{p_1}t^{1/(2q_1)+1/2}\;p^0_{3, \alpha}(t,x,y),\label{e:3.48}
\end{eqnarray}
where the last $``\lesssim"$ in \eqref{e:3.48} holds because
\begin{equation*}
\int_{\IR_+}\left(s^{-1/(2q_1)}e^{-4\beta|z|^2/s}\right)^{q_1} m_p(dz)=\int_{\IR_+}s^{-1/2}e^{-4q_1\beta|z|^2/s}dz=\frac{1}{4}\sqrt{\frac{\pi}{q_1\beta}}.
\end{equation*}
{\it Case 2.} $z\in D_\eps, \, |z|_\rho\ge 4$. Let $0<\alpha<\beta/4$.  For $q_2$ satisfying  $1/p_2+1/q_2=1$, by H\"{o}lder's inequality it holds
\begin{eqnarray}
&& \int_{0}^t \frac{1}{\sqrt{s}}\int_{z\in D_\eps, |z|_\rho\ge 4}p^0_{3,\alpha} (t-s, x,z)b(z)p^0_{3,\beta}(s, z,y)m_p(dz)ds \nonumber
\\
&= & \int_{0}^t\frac{1}{\sqrt{s}} \int_{|z|_\rho \ge 4}\frac{1}{t-s}e^{-\alpha |x-z|^2/(t-s)}\frac{1}{s}e^{-\beta |y-z|^2/s}b(z)m_p(dz)ds \nonumber
\\
&\lesssim & \int_{0}^t \frac{1}{(t-s)^{1-1/q_2}}\frac{\|b_2\|_{p_2}}{s^{3/2-1/q_2}} \nonumber
\\
&\times &  \left(\int_{|z|_\rho\ge 4} \left(\frac{1}{(t-s)^{1/q_2}}e^{-\alpha q_2|x-z|^2/(t-s)}\frac{1}{s^{1/q_2}}e^{-\alpha q_2|y-z|^2/s}\right)^{q_2}m_p(dz)\right)^{1/q_2} ds \nonumber
\\
&\le & \int_{0}^t \frac{1}{(t-s)^{1-1/q_2}}\frac{\|b_2\|_{p_2}}{s^{3/2-1/q_2}}\left(\int_{|z|_\rho\ge 4}\frac{1}{t-s}e^{-\alpha q_2|x-z|^2/(t-s)}\frac{1}{s}e^{-\alpha q_2|y-z|^2/s}m_p(dz)\right)^{1/q_2} ds  \nonumber
\\
&\lesssim & \frac{\|b_2\|_{q_2}}{t^{3/2-2/q_2}}\frac{1}{t^{1/q_2}}e^{-\alpha|x-y|^2/t}=\|b_2\|_{p_2}t^{1/q_2-1/2}\;p^0_{3, \alpha}(t,x,y), \label{257}
\end{eqnarray}
where the last $``\lesssim"$ follows from
\begin{align*}
\int_{|z|_\rho\ge 4}\frac{1}{t-s}e^{-\alpha q_2|x-z|^2/(t-s)}\frac{1}{s}e^{-\alpha q_2|y-z|^2/s}m_p(dz) \lesssim \frac{1}{t}e^{-\alpha q_2 |x-y|^2/t},
\end{align*}
by  the   computation similar to  that for \eqref{semigroup-2-d-BM}.

For the case $z\in D_\eps \cup \{a^*\}$ with $|z|_\rho<4$, we further divide it into two subcases depending on whether $|x|_\rho\ge 1$ or $|x|_\rho<1$.
\\
{\it Case 3.} $z\in D_\eps\cup \{a^*\},\,|z|_\rho<4, \,|x|_\rho\ge 1 $. For this case, provided $0<\alpha<\beta/4$,  we can adopt the same computation as that for Case 2 and obtain for $q_2: 1/p_2+1/q_2=1$ that
\begin{align*}
\int_{0}^t \frac{1}{\sqrt{s}}\int_{z\in D_\eps\cup \{a^*\}, |z|_\rho< 4}p^0_{3,\alpha} (t-s, x,z)b(z)p^0_{3,\beta}(s, z,y)m_p(dz)ds 
%&\lesssim \int_{0}^t\frac{1}{\sqrt{s}} \int_{|z|_\rho < 4}\frac{1}{t-s}e^{-\alpha |x-z|^2/(t-s)}\frac{1}{s}e^{-\beta |y-z|^2/s}b(z)m_p(dz)ds \nonumber
%\\
%&\le \int_{0}^t \frac{1}{(t-s)^{1-1/q_2}}\frac{1}{s^{3/2-1/q_2}}\left(\int_{|z|_\rho< 4}\frac{1}{t-s}e^{-\alpha q_2|x-z|^2/(t-s)}\frac{1}{s}e^{-\alpha q_2|y-z|%^2/s}m_p(dz)\right)^{1/q_2}ds \|b_2\|_{p_2} \nonumber
%\\
&\lesssim \|b_2\|_{p_2}t^{1/q_2-1/2}\;p^0_{3, \alpha}(t,x,y), \label{606}
\end{align*}
where again $1/q_2>1/2$ because $p_2>2$. 
\\
{\it Case 4. }$z\in D_\eps \cup \{a^*\},\, |z|_\rho<4, \,|x|_\rho<1$.  For this case, whether $|z|_\rho>1$ or not,
\begin{equation*}
p^0_{3,\alpha} (t-s,x,z)\le \frac{1}{\sqrt{t-s}}e^{-\alpha(|x|_\rho^2+|z|^2_\rho)/(t-s)}+\frac{1}{t-s}e^{-\alpha |x-z|^2/(t-s)}.
\end{equation*}
Therefore, we write
\begin{eqnarray}
&&\int_{0}^t \int_{|z|_\rho<4}p^0_{3, \alpha} (t-s, x,z)b(z)p^0_{3, \beta}(s, z,y)m_p(dz)ds \nonumber
\\
&\le & \int_{0}^t\frac{1}{\sqrt{s}} \int_{|z|_\rho < 4}\frac{1}{\sqrt{t-s}}e^{-\alpha(|x|_\rho^2+|z|^2_\rho)/(t-s)}\frac{1}{s}e^{-\beta |y-z|^2/s}b(z)m_p(dz)ds \nonumber
\\
&+ & \int_{0}^t\frac{1}{\sqrt{s}} \int_{|z|_\rho < 4}\frac{1}{t-s}e^{-\alpha |x-z|^2/(t-s)}\frac{1}{s}e^{-\beta |y-z|^2/s}b(z)m_p(dz)ds ={\rm (I)}+{\rm (II)} \label{e:3.58}
\end{eqnarray}
We observe that (II) in  \eqref{e:3.58} can be handled in exact same way as that for   Case 3, i.e., 
\begin{align}
{\rm (II)} \lesssim e^{-\alpha|x-y|^2/t}\frac{\|b_2\|_{p_2}}{t^{3/2-1/q_2}}=\|b_2\|_{p_2}t^{1/q_2-1/2}p^0_{3, \alpha}(t,x,y),\label{927}
\end{align}
where $1/q_2=1/p_2$. To handle (I) in  \eqref{e:3.58}, we recall that with $M=48$, \eqref{choice-of-M} holds. We also observe that when  $|x|_\rho<1$, $|y|_\rho\ge M=48$, and $\eps\in (0, 1/4)$,  it holds 
\begin{align}
(|x|_\rho+|y|_\rho+2\eps)^2 &= |x|_\rho^2+|y|_\rho^2+4\eps^2+4\eps (|x|_\rho+|y|_\rho)+2|x|_\rho |y|_\rho \nonumber
\\
&\le |x|_\rho^2+|y|_\rho^2+3|y|_\rho+\eps +4\eps \le |x|_\rho^2+|y|_\rho^2+8\eps+4|y|_\rho. \label{104}
\end{align}
Also  when $|y|_\rho\ge 48$ and $|z|_\rho<4$,   it must hold 
\begin{equation}\label{eq:3.61}
|z-y|^2 \ge |y|^2/2
\end{equation}
as well as
\begin{equation}\label{eq:3.62}
|z-y|^2/4>8\eps+4|y|. 
\end{equation}
Thus, for  $0<\alpha<\beta/4$   and $q_2: 1/p_2+1/q_2=1$   we have 
\begin{align}
{\rm (I)}&= \int_{0}^t \frac{1}{(t-s)^{1/2}}e^{-\alpha (|x|_\rho^2+|z|_\rho^2)/(t-s)}\int_{|z|_\rho<4}\frac{1}{s^{3/2}}e^{-\beta |z-y|^2/s}b(z)m_p(dz)ds \nonumber
\\
&\le \int_{0}^t \frac{1}{(t-s)^{1/2}}e^{-\alpha |x|_\rho^2/t}\int_{|z|_\rho<4}\frac{1}{s^{3/2}}e^{-\beta |z-y|^2/4s}e^{-\beta |z-y|^2/2s}e^{-\beta |z-y|^2/4s}b(z)m_p(dz)ds \nonumber
\\
&\stackrel{\eqref{eq:3.61}}{\le} \int_{0}^t \frac{1}{(t-s)^{1/2}}e^{-\alpha |x|_\rho^2/t}\int_{|z|_\rho<4}\frac{1}{s^{3/2}}\cdot e^{-\beta|z-y|^2/4t}e^{-\beta |y|^2/4s}e^{-\beta |z-y|^2/4s}b(z)m_p(dz)ds \nonumber
\\
&\stackrel{\eqref{eq:3.62}}{\le} \int_{0}^t \frac{1}{(t-s)^{1/2}}e^{-\alpha |x|_\rho^2/t}\cdot e^{-\alpha(8\eps+4|y|)/t}e^{-\alpha |y|^2/t}\int_{|z|_\rho<4}\frac{1}{s^{3/2}}e^{-\beta |z-y|^2/4s}b(z)m_p(dz)ds \nonumber
\\
&\stackrel{\eqref{104}}{\lesssim} e^{-\alpha (|x|_\rho+|y|_\rho+2\eps)^2/t} \int_{0}^t \frac{\|b_2\|_{p_2} }{s^{3/2-1/q_2}} \frac{1}{(t-s)^{1/2}} \left(\int_{|z|_\rho<4}\left(\frac{1}{s^{1/q_2}}e^{-\beta |z-y|^2/4s}\right)^{q_2} m_p(dz)\right)^{\frac{1}{q_2}} ds\nonumber
\\
&\lesssim \|b_2\|_{p_2}\frac{t^{1/q_2}}{t}e^{-\alpha(|x|+|y|)^2/t}\le \|b_2\|_{p_2}\frac{t^{1/q_2}}{t}e^{-\alpha |x-y|^2/t}=\|b_2\|_{p_2}t^{1/q_2}p^0_{3, \alpha }(t,x,y), \label{931}
\end{align}
where the second last $``\lesssim "$ follows from 
\begin{equation*}
\int_{|z|_\rho<4}\left(\frac{1}{s^{1/q_2}}e^{-\beta |z-y|^2/4s}\right)^{q_2} m_p(dz)\le \int_{\IR^2}\frac{1}{s}e^{-\beta q_2 |z-y|^2/4s} dz =\int_{\IR^2}\frac{1}{s}e^{-\beta q_2 |z|^2/4s} dz=\frac{4 \pi }{\beta q_2}.
\end{equation*}
In view of \eqref{e:3.58}, adding up  \eqref{927} and \eqref{931}  yields for Case 4 that
\begin{align}
&\quad \int_{0}^t \int_{|z|_\rho<4}p^0_{3,\alpha} (t-s, x,z)b(z)p^0_{3, \beta}(s, z,y)m_p(dz)ds \lesssim \|b_2\|_{p_2}\left(t^{1/q_2-1/2}+t^{1/q_2}\right)p^0_{3,\alpha}(t,x,y),\label{952}
\end{align}
where $1/q_2>1/2$ because $p_2>2$. Finally, combining Cases 1-4 shows  for $0<\alpha<\beta/4$ that
\begin{align*}
\int_{0}^t \int_{E}p^0_{3, \alpha} (t-s, x,z)b(z)&p^0_{3,\beta}(s, z,y)m_p(dz)ds 
\\
&\lesssim \left(\|b_1\|_{p_1}+\|b_2\|_{p_2}\right)\left(t^{1/(2q_1)+1/2}+t^{1/q_2-1/2}\right)p^0_{3,\alpha}(t,x,y).
\end{align*}
The second conclusion of this proposition now follows from Proposition \ref{P:3.1}
 and Proposition \ref{P:3.2}.
 \end{proof}
For the next proposition, we use another version of the  canonical forms for $X^0$ as follows:
\begin{equation}\label{e:3.61}
p^0_{4,\alpha}(t,x,y):=\left\{
  \begin{aligned}  
   & \frac{1}{\sqrt{t}}e^{-\alpha |x-y|^2/t},\quad
x,y\in \IR_+;& 
\\
& \frac{1}{\sqrt{t}}e^{-4\alpha (|x|^2+|y|_\rho^2)/t}, \quad
x\in \IR_+,y\in D_\eps \cup \{a^*\};& 
\\ 
& \frac{1}{\sqrt{t}}e^{-4\alpha (|x|_\rho^2+|y|^2)/t}, \quad
x\in D_\eps \cup \{a^*\}, y\in \IR_+;& 
\\ 
 & \frac{1}{t}e^{-2\alpha |x-y|^2/t}, \quad  x,y\in D_\eps\cup \{a^*\}, \quad |y|_\rho\ge  M;& 
\\ 
 & \frac{1}{t}e^{-\alpha (|x|-|y|)^2/t},\quad   x,y\in D_\eps\cup \{a^*\}, \quad  |x|_\rho\ge 2M,|y|_\rho< M;& 
 \\ 
&   \frac{1}{\sqrt{t}}e^{-\alpha(|x|^2_\rho+|y|_\rho^2)/t}+\frac{1}{t}\left(1\wedge \frac{|x|_\rho}{\sqrt{t}}\right)\left(1\wedge \frac{|y|_\rho}{\sqrt{t}}\right)e^{-\alpha |x-y|^2/t}, \quad  \text{otherwise}. & 
  \end{aligned}
\right.
\end{equation}
Note that when $\eps<1/4$,  $|x|_\rho\ge 2M$ and $|y|_\rho<M$, it holds $|x|-|y|\le |x-y|\le 4( |x|-|y|)$, i.e., $|x-y|\asymp |x|-|y|$. The following lemma follows immediately. 
\begin{lem}
There exists a  constant $C_{19}, C_{20}>0$, $\alpha_{12}, \alpha_{13}>0$ such that it holds
\begin{equation*}
C_{19}\,p^0_{4,\alpha_{12}}(t,x,y)\le p(t,x,y)\le C_{20}\,p^0_{4,\alpha_{13}}(t,x,y), \quad t\in (0, T],\,(x,y)\in  E\times E.
\end{equation*}
\end{lem}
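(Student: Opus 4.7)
The plan is a pure case-by-case verification that the canonical form $p^0_{4,\alpha}(t,x,y)$ defined in \eqref{e:3.61} matches the two-sided estimates of Theorem \ref{smalltime} up to adjustment of constants. The first three branches of \eqref{e:3.61} (both variables in $\IR_+$, or one in $\IR_+$ and the other in $D_\eps\cup\{a^*\}$) and the last branch are formally identical to the corresponding branches of $p^0_{0,\alpha}$; for those, I would simply invoke Remark 1.6 to obtain
$$C_1\,p^0_{0,\alpha_1}(t,x,y)\le p(t,x,y)\le C_2\,p^0_{0,\alpha_2}(t,x,y),$$
and absorb the factor of $4$ appearing in the exponent of $p^0_{4,\alpha}$ by rescaling $\alpha$.

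The only genuinely new branches are the two middle lines describing $x,y\in D_\eps\cup\{a^*\}$ with $|y|_\rho\ge M$ (call this Case A) and with $|x|_\rho\ge 2M,\,|y|_\rho<M$ (Case B). In Case A, since $M=48>1$, Remark 1.6 gives $\rho(x,y)\asymp|x-y|$, and because $|x|_\rho+|y|_\rho\ge 1$, the second line of Theorem \ref{smalltime}(ii) yields $p(t,x,y)\asymp t^{-1}e^{-C\rho(x,y)^2/t}\asymp t^{-1}e^{-C|x-y|^2/t}$, which matches $\tfrac1t e^{-2\alpha|x-y|^2/t}$ after choosing $\alpha$ appropriately (small for the upper bound, large for the lower bound). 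In Case B, again $|x|_\rho>1$ forces $\rho(x,y)\asymp|x-y|$; moreover, from $|x|-|y|\ge 2M-(M+\eps)\ge M$ and the elementary chain $|x|-|y|\le|x-y|\le|x|+|y|\le(|x|-|y|)+2(M+\eps)\lesssim|x|-|y|$, we get $|x-y|\asymp|x|-|y|$, so $\tfrac1t e^{-C|x-y|^2/t}\asymp \tfrac1t e^{-C(|x|-|y|)^2/t}$ as required.

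The one point that deserves care is the "otherwise" branch restricted to the transition region $|x|_\rho+|y|_\rho\ge1$ with both $|x|_\rho<2M$ and $|y|_\rho<M$. Here Theorem \ref{smalltime}(ii) still gives $p(t,x,y)\asymp \tfrac1t e^{-C\rho^2/t}$, and we must show this matches the full mixed form in the last line of \eqref{e:3.61}. The key is that on the bounded set $\{|x|_\rho,|y|_\rho\le 2M\}$ with $|x|_\rho+|y|_\rho\ge 1$, at least one of $|x|_\rho,|y|_\rho$ is bounded below by $1/2$, the factors $(1\wedge|x|_\rho/\sqrt t)(1\wedge|y|_\rho/\sqrt t)$ are bounded above by $1$ and (for the lower bound) below by constants depending on $T,M$, and the first summand $\tfrac{1}{\sqrt t}e^{-\alpha(|x|_\rho^2+|y|_\rho^2)/t}$ can be controlled since $|x|_\rho^2+|y|_\rho^2$ is bounded. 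Choosing $\alpha_{13}$ small enough (so that $e^{-C\rho^2/t}$ is absorbed by the first summand with plenty of room) and $\alpha_{12}$ large enough (so that each summand of $p^0_{4,\alpha_{12}}$ is dominated by $p$) completes the verification.

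\noindent\textbf{Main obstacle.} I expect the bookkeeping in the transition zone just described to be the only nontrivial step: one must carefully choose the two exponential constants to work uniformly across all sub-regions, and in particular verify that the factor $4$ inserted into the off-diagonal branches of \eqref{e:3.61} does not disturb the asymptotics relative to $p^0_{0,\alpha}$. Once this is done the lemma follows by collating the cases, which is why the authors can assert that it ``follows immediately.''
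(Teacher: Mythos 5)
Your proposal is correct and takes essentially the same route as the paper, which offers no written proof beyond asserting that the lemma "follows immediately" from Theorem \ref{smalltime} (equivalently Remark 1.6) together with the observation recorded just before the lemma that $|x-y|\asymp |x|-|y|$ when $|x|_\rho\ge 2M$ and $|y|_\rho<M$ --- precisely the case-by-case comparison you carry out. One small imprecision: in the transition zone the factors $\left(1\wedge \frac{|x|_\rho}{\sqrt{t}}\right)\left(1\wedge \frac{|y|_\rho}{\sqrt{t}}\right)$ are bounded below (by a $T$-dependent constant) only when both $|x|_\rho$ and $|y|_\rho$ stay away from $0$; when one of them is small you must, as you indeed indicate, fall back on the first summand, using that then $\rho(x,y)\gtrsim 1$ while $|x|_\rho^2+|y|_\rho^2$ is bounded --- a routine fix that does not affect the argument.
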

\begin{prop}\label{152}
 Let $M\ge 16$ be fixed  as in \eqref{choice-of-M}.   There exists  $\alpha_{14}>0$ such that for every $
0<\alpha<\alpha_{14}$, there exists some function $C_{21}(t)>0$ \emph{(}possibly depending on $\alpha$\emph{)} such that
\begin{align}
\int_0^t \int_{z\in E}p^0_{4,\alpha}(t-s,x,z)|b(z)| \left|\nabla_z p(s, z, y )\right| & m_p(dz)ds\le C_{21}(t)p^0_{4,\alpha}(t,x,y), \, 0<s<t\le T, \nonumber
\\
&x,y\in D_\eps \cup \{a^*\},\, |x|_\rho\ge2M,|y|_\rho<M, \label{eq:3.56}
\end{align}
where $C_{21}(t)$ is non-decreasing in $t$, $C_{21}(t)\rightarrow 0$ as $t \rightarrow 0$.
\end{prop}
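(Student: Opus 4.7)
The plan is to adapt the decomposition-plus-H\"{o}lder strategy of Propositions \ref{P:3.6}, \ref{P:3.8}, and \ref{1100}. Assume without loss of generality $b\ge 0$, and partition $E$ as $\IR_+ \sqcup \{z\in D_\eps\cup\{a^*\} : |z|_\rho \ge M\} \sqcup \{z\in D_\eps\cup\{a^*\} : |z|_\rho < M\}$. In each region, substitute the explicit form of $p^0_{4,\alpha}(t-s,x,z)$ from \eqref{e:3.61} together with the corresponding upper bound for $|\nabla_z p(s,z,y)|$ from Propositions \ref{P:3.1} or \ref{P:3.2}. After applying H\"{o}lder's inequality in $z$ (exponent $p_1$ when $z\in\IR_+$, exponent $p_2$ when $z\in D_\eps\cup\{a^*\}$, possibly with an auxiliary triple as in \eqref{eq:3.36}), use a triangle-type inequality to factor out $p^0_{4,\alpha}(t,x,y)=t^{-1}e^{-\alpha(|x|-|y|)^2/t}$, so that the residual $z$-integral is uniformly bounded and the time integral contributes a positive power of $t$.

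For $z\in\IR_+$, Proposition \ref{P:3.1} gives $|\nabla_z p(s,z,y)|\lesssim s^{-1}e^{-\beta_1(|z|^2+|y|_\rho^2)/s}$, and combined with $p^0_{4,\alpha}(t-s,x,z)=(t-s)^{-1/2}e^{-4\alpha(|x|_\rho^2+|z|^2)/(t-s)}$, the Cauchy--Schwarz bound
\begin{equation*}
\frac{|x|_\rho^2}{t-s}+\frac{|y|_\rho^2}{s}\ge \frac{(|x|_\rho+|y|_\rho)^2}{t}\ge \frac{(|x|-|y|)^2}{t}
\end{equation*}
(using $|x|-|y|=|x|_\rho-|y|_\rho$) extracts the desired decay. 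H\"{o}lder with $\|b_1\|_{p_1}$ and the remaining one-dimensional Gaussian integral in $z$ then yield a factor $t^{1/(2q_1)}p^0_{4,\alpha}(t,x,y)$ with $1/p_1+1/q_1=1$. For $z\in D_\eps\cup\{a^*\}$ with $|z|_\rho\ge M$, Proposition \ref{P:3.2}(i) supplies $|\nabla_z p(s,z,y)|\lesssim s^{-3/2}e^{-\beta_2|z-y|^2/s}$ and $p^0_{4,\alpha}(t-s,x,z)$ is of two-dimensional Gaussian type; the standard Euclidean triangle inequality $\frac{|x-z|^2}{t-s}+\frac{|z-y|^2}{s}\ge\frac{|x-y|^2}{t}\ge\frac{(|x|-|y|)^2}{t}$, H\"{o}lder with $\|b_2\|_{p_2}$, and the two-dimensional Gaussian semigroup identity \eqref{semigroup-2-d-BM} complete this case.

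The main obstacle is the subcase $z\in D_\eps\cup\{a^*\}$ with $|z|_\rho<M$, in which $p^0_{4,\alpha}(t-s,x,z)=(t-s)^{-1}e^{-\alpha(|x|-|z|)^2/(t-s)}$ is of radial form and, when $\max\{|z|_\rho,|y|_\rho\}<1$, the bound from Proposition \ref{P:3.2}(ii) decomposes into two additive pieces. The key observation is that since $|x|_\rho\ge 2M > M > |z|_\rho,|y|_\rho$, we have $|x|\ge|z|$ and
\begin{equation*}
|x|-|y|\le (|x|-|z|)+||z|-|y||\le (|x|-|z|)+|z-y|,
\end{equation*}
so Cauchy--Schwarz yields $\frac{(|x|-|z|)^2}{t-s}+\frac{|z-y|^2}{s}\ge \frac{(|x|-|y|)^2}{t}$. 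This furnishes the target exponential directly for the Gaussian piece $s^{-3/2}e^{-\beta_2|z-y|^2/s}$; for the boundary piece $s^{-1}e^{-\beta_2(|z|_\rho^2+|y|_\rho^2)/s}$ one first observes $|z|_\rho^2+|y|_\rho^2\ge \tfrac12(|z|_\rho-|y|_\rho)^2=\tfrac12(|z|-|y|)^2$ and then applies a second Cauchy--Schwarz step. Adopting the three-exponent H\"{o}lder trick from \eqref{eq:3.36} (choosing $q_2',r_2$ with $1/p_2+1/q_2'+1/r_2=1$ and $q_2'$ large enough that $1/p_2+1/(2q_2')<1/2$, valid since $p_2>2$), and switching to polar coordinates on $\{|z|_\rho<M\}$ as in \eqref{608}, produces a positive power of $t$ times $p^0_{4,\alpha}(t,x,y)$. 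Summing the three regional contributions gives $C_{21}(t)$ as a sum of positive powers of $t$, which vanishes as $t\to 0$.
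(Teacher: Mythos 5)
Your decomposition and your handling of the first two regions are essentially sound and parallel the paper's proof: for $z\in\IR_+$ the computation is the paper's Case 1 (your exponent bookkeeping gives $t^{1/2+1/(2q_1)}$ rather than $t^{1/(2q_1)}$, which is harmless), and for $|z|_\rho\ge M$ your use of Proposition \ref{P:3.2}(i) together with the Euclidean two-dimensional semigroup identity \eqref{semigroup-2-d-BM} is a legitimate simplification of the paper's polar-coordinate argument with \eqref{eq:3.71}; note that this works there precisely because $p^0_{4,\alpha}(t-s,x,z)=\frac{1}{t-s}e^{-2\alpha|x-z|^2/(t-s)}$ carries the coefficient $2\alpha$, so one copy of $\alpha$ can be spent on the triangle-inequality extraction and one copy is left to run the H\"{o}lder/semigroup step. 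The genuine gap is in the region $|z|_\rho<M$, the heart of the proposition, where $p^0_{4,\alpha}(t-s,x,z)=\frac{1}{t-s}e^{-\alpha(|x|-|z|)^2/(t-s)}$ has only a bare $\alpha$ and no spare copy. Your plan is to first extract the full target exponential $e^{-\alpha(|x|-|y|)^2/t}$ by the pointwise radial Cauchy--Schwarz inequality and only afterwards apply H\"{o}lder and the polar-coordinate convolution \eqref{608}. That order cannot be executed: the extraction consumes the entire $\alpha$-weighted Gaussian in $t-s$ (no portion $\delta>0$ of it can be reserved, since for $|z|_\rho=|y|_\rho\to 0$ and $s\to 0$ the inequality $(\alpha-\delta)\frac{(|x|-|z|)^2}{t-s}+\beta\frac{|z|_\rho^2+|y|_\rho^2}{s}\ge\alpha\frac{(|x|-|y|)^2}{t}$ forces $\delta=0$), and what remains is $\frac{1}{t-s}\cdot\frac{b(z)}{s}e^{-c|z|_\rho^2/s}$ or $\frac{1}{t-s}\cdot\frac{b(z)}{s^{3/2}}e^{-c|z-y|^2/s}$, whose $z$-integral returns only positive powers of $s$; the time integral then carries a non-integrable $(t-s)^{-1}$ at $s=t$. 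Splitting the Gaussian instead would only produce $e^{-\alpha'(|x|-|y|)^2/t}$ with $\alpha'<\alpha$, which is strictly weaker than \eqref{eq:3.56}: the same $\alpha$ must appear on both sides, and this is exactly what the Duhamel iteration in Lemma \ref{L:3.16} needs.

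The repair is the paper's treatment in \eqref{1243} and \eqref{1251} (and \eqref{eq:3.36}), which you cite but do not actually follow: do not extract beforehand. Keep both Gaussians --- the one in $t-s$ and the one in $s$ --- together with a fractional share $(t-s)^{-1/(2q_2)}s^{-1/(2q_2)}$ of the prefactors inside the H\"{o}lder factor, and let the one-dimensional Chapman--Kolmogorov bound \eqref{608} in the radial variable convert $\frac{1}{\sqrt{t-s}}\frac{1}{\sqrt{s}}$ into $\frac{1}{\sqrt{t}}$ while outputting the radial Gaussian with the full constant: for the boundary piece this yields $e^{-\alpha|x|_\rho^2/t}$, which is then combined with $e^{-\beta|y|_\rho^2/s}\le e^{-\alpha|y|_\rho^2/t}$ and $|x|_\rho^2+|y|_\rho^2\ge(|x|_\rho-|y|_\rho)^2=(|x|-|y|)^2$; for the Gaussian piece the three-exponent H\"{o}lder routes the pair of radial Gaussians into the $q_2'$-factor and the leftover $e^{-(\beta-\alpha)|z-y|^2/s}$ into the $r_2$-factor, exactly as in \eqref{1251}. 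With Case 3 reorganized along these lines (your pointwise inequalities $|x|-|y|\le(|x|-|z|)+|z-y|$ and $|z|_\rho^2+|y|_\rho^2\ge\frac12(|z|-|y|)^2$ are then unnecessary), the three regional contributions sum to a bound of the form $C_{21}(t)p^0_{4,\alpha}(t,x,y)$ with $C_{21}(t)\to 0$, as required.
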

\begin{proof}  We establish \eqref{eq:3.56} via dividing the computation into three cases based on the position of $z$. Without loss of generality, we assume $b\ge 0$.     Recall in \eqref{def-beta3}, we have defined $\beta_3=(\beta_1\wedge \beta_2)/16 $, where $\beta_1, \beta_2$ are chosen in Proposition \ref{P:3.1}
 and  Proposition \ref{P:3.2} respectively.
 \\
{\it Case 1.} $z\in \IR_+$.   Since $|x|_\rho>2M$, similar to \eqref{443}, again it holds
\begin{equation}\label{115}
4(|x|_\rho^2+|y|_\rho^2)\ge |x|_\rho^2+|y|_\rho^2+2|x|_\rho|y|_\rho+4\eps (|x|_\rho+|y|_\rho)=(|x|_\rho+|y|_\rho+2\eps)^2.
\end{equation}
Therefore, following the same lines as proving \eqref{e:3.48} in Proposition \ref{1100}, one can show that  for any $0<\alpha<\beta/4$, where $0<\beta<\beta_3$.
\begin{align*}
\quad \int_0^t \frac{1}{\sqrt{s}}\int_{\IR_+}  p^0_{4, \alpha}(t-s, x,z)b(z)p^0_{4,\beta}(s, z,y)m_p(dz)ds & \lesssim \|b_1\|_{p_1} e^{-\alpha |x-y|^2/t}\cdot t^{1/(2q_1)-1/2}
\\
\le \|b_1\|_{p_1} e^{-\alpha (|x|-|y|)^2/t}\cdot t^{1/(2q_1)-1/2}&=\|b_1\|_{p_1} t^{1/(2q_1)+1/2}p^0_{4, \alpha}(t,x,y).
\end{align*}
It has been claimed in  Proposition \ref{P:3.1} that for any $0<\beta<\beta_3$,
\begin{equation*}
|\nabla _z p(s,z,y)|\lesssim \frac{1}{\sqrt{s}}p^0_{4, \beta}(s,z,y), \quad z\in \IR_+, y\in D_\eps\cup \{a^*\}.
\end{equation*}
Therefore, for every $0<\alpha<\beta_3/4$,  it holds
\begin{align*}
\int_0^t \int_{\IR_+}  p^0_{4, \alpha}(t-s, x,z)b(z)|\nabla_z p(s,z,y)|m_p(dz)ds &\lesssim c_{1}(t)p^0_{4, \alpha}(t,x,y), 
\end{align*}
where $c_{1}(t)>0$ is non-decreasing in $t$ with $c_{1}(t)\rightarrow 0$ as $t \rightarrow 0$.
\\
{\it Case 2.} $z\in D_\eps, \,  |z|_\rho\ge 2M$, where $M=48$.  We first note that given $0<\eps<1/4$, it holds 
\begin{equation*}
\Big||y|-r\Big|>\frac{r}{4}, \quad \text{for all }r>2M+\eps,\, |y|<M+\eps.
\end{equation*}
 Thus for every $\alpha>0$,  there exists some constant $c_1>0$  such that 
\begin{equation}\label{eq:3.71}
\frac{r}{\sqrt{s}}e^{-2\alpha q_2(|y|-r)^2/s}\le \frac{r}{\sqrt{s}}e^{-2\alpha q_2r^2/(16s)}\le c_1, \quad \text{for all }r>2M+\eps, |y|<M+\eps, s<T.
\end{equation}
For $1/p_2+1/q_2=1$,  by H\"{o}lder's inequality,   it holds  for every $0<\alpha<\beta/4$, where $0<\beta<\beta_3$ that
\begin{eqnarray}
&&\int_0^t \frac{1}{\sqrt{s}}\int_{z\in D_\eps, |z|_\rho \ge 2M}  p^0_{4, \alpha}(t-s, x,z)b(z)p^0_{4,\beta}(s, z,y)m_p(dz)ds\nonumber
\\
&= &\int_{0}^t\frac{1}{\sqrt{s}} \int_{|z|_\rho \ge  2M}\frac{1}{t-s}e^{-2\alpha |x-z|^2/(t-s)}\frac{1}{s}e^{-\beta (|y|-|z|)^2/s}b(z)m_p(dz)ds \nonumber
\\
&\le &\int_{0}^t\frac{1}{\sqrt{s}} \int_{|z|_\rho \ge 2M}\frac{1}{t-s}e^{-2\alpha |x-z|^2/(t-s)}\frac{1}{s}e^{-4\alpha(|y|-|z|)^2/s}b(z)m_p(dz)ds \nonumber
\\
&\lesssim &\int_{0}^t \frac{1}{(t-s)^{1-1/(2q_2)}}\frac{\|b_2\|_{p_2} }{s^{3/2-1/q_2}}\nonumber
\\
&\times & \left(\int_{|z|_\rho\ge 2M}\frac{1}{\sqrt{t-s}}e^{-2\alpha q_2(|x|-|z|)^2/(t-s)}\frac{1}{s}e^{-4\alpha q_2(|y|-|z|)^2/s}m_p(dz)\right)^{1/q_2}ds \nonumber
\\
&\stackrel{r=|z|}{=} &\int_{0}^t \frac{1}{(t-s)^{1-1/(2q_2)}}\frac{\|b_2\|_{p_2} }{s^{3/2-1/q_2}}\left(\int_{r\ge 2M+\epsilon}\frac{r}{\sqrt{t-s}}e^{-2\alpha q_2(|x|-r)^2/(t-s)}\frac{1}{s}e^{-4\alpha q_2(|y|-r)^2/s}dr\right)^{\frac{1}{q_2}} ds \nonumber
\\
&\stackrel{\eqref{eq:3.71}}{\lesssim} &\int_{0}^t \frac{1}{(t-s)^{1-1/(2q_2)}}\frac{\|b_2\|_{p_2} }{s^{3/2-1/q_2}} \left(\int_{r\ge 2M+\epsilon}\frac{1}{\sqrt{t-s}}e^{-2\alpha q_2(|x|-r)^2/(t-s)}\frac{1}{\sqrt{s}}e^{-2\alpha q_2(|y|-r)^2/s}dr\right)^{\frac{1}{q_2}} ds \nonumber
\\
&\lesssim  & \frac{\|b_2\|_{q_2}}{t^{3/2-3/(2q_2)}}\cdot \frac{1}{t^{1/(2q_2)}} e^{-\alpha(|x|-|y|)^2/t} \nonumber = \frac{\|b_2\|_{q_2}}{t^{3/2-1/q_2}}e^{-\alpha(|x|-|y|)^2/t}
\\
&=& \|b_2\|_{p_2}t^{1/q_2-1/2}p^0_{3, \alpha}(t,x,y). \label{e:3.69}
\end{eqnarray}
where the  second  last  $``\lesssim "$ in \eqref{e:3.69}  follows from
\begin{align*}
&\quad \int_{r\ge 2M+\epsilon}\frac{1}{\sqrt{t-s}}e^{-2\alpha q_2(|x|-r)^2/(t-s)}\frac{1}{\sqrt{s}}e^{-2\alpha q_2(|y|-r)^2/s}dr \lesssim \frac{1}{\sqrt{t}}e^{-2\alpha q_2 (|x|-|y|)^2/t}, 
\end{align*}
by the computation used for \eqref{608}.
This establishes the following inequality for $0<\alpha<\beta/4$, where $0<\beta<\beta_3$:
\begin{equation*}
\int_0^t \frac{1}{\sqrt{s}}\int_{z\in D_\eps, |z|_\rho\ge 2M}  p^0_{4, \alpha}(t-s, x,z)b(z)p^0_{4,\beta}(s, z,y)m_p(dz)ds \le \|b_2\|_{q_2} t^{1/q_2-1/2} p^0_{4, \alpha}(t,x,y).
\end{equation*}
 It has been claimed in  Proposition \ref{P:3.2} that for $0<\beta<\beta_3$:
\begin{equation*}
|\nabla _z p(s,z,y)|\lesssim \frac{1}{\sqrt{s}}p^0_{4, \beta}(s,z,y), \quad \text{when  }z, y\in D_\eps\cup \{a^*\}, \, \max\{|z|_\rho, |y|_\rho\}\ge 1. 
\end{equation*}
Therefore, for every $0<\alpha<\beta_3/4$, it holds
\begin{align*}
\int_0^t \int_{z\in D_\eps, |z|_\rho\ge 2M}  p^0_{4, \alpha}(t-s, x,z)b(z)|\nabla_z p(s,z,y)|m_p(dz)ds &\lesssim c_{2}(t)p^0_{4, \alpha}(t,x,y), 
\end{align*}
where $c_{2}(t)>0$ is non-decreasing in $t$ with $c_{1}(t)\rightarrow 0$ as $t \rightarrow 0$.
\\
{\it Case 3. }$z\in D_\eps\cup \{a^*\}, \, |z|_\rho<2M$. It has been shown  that for $0<\beta< \beta_3$ in Proposition \ref{P:3.2}, whether $|y|_\rho, |z|_\rho$ are greater than $1$ or not, it always holds
\begin{equation}\label{1244}
|\nabla_z p(s,z,y)|\lesssim \frac{1}{s}e^{-\beta(|z|_\rho^2+|y|_\rho^2)/s}+\frac{1}{s^{3/2}}e^{-\beta |z-y|^2/s}.
\end{equation}
In addition, we observe that regardless of whether $|z|_\rho\ge M$ or $|z|_\rho <M$, it always holds for this case that 
\begin{equation*}
p^0_{4,\alpha}(t-s, x,z)\lesssim \frac{1}{t-s}e^{-\alpha (|x|-|z|)^2/(t-s)}.
\end{equation*}
Therefore, for $0<\alpha<\beta/4$ where $\beta<\beta_3$,
\begin{eqnarray}
&& \int_0^t \int_{z\in D_\eps \cup \{a^*\}, |z|_\rho<2M}  p^0_{4, \alpha}(t-s, x,z)b(z)|\nabla_z p(s, z,y)|m_p(dz)ds  \nonumber
\\
&\lesssim & \int_{0}^t\int_{|z|_\rho<2M}\frac{1}{t-s}b(z)e^{-\alpha(|x|-|z|)^2/(t-s)}\left(\frac{1}{s}e^{-\beta(|z|_\rho^2+|y|_\rho^2)/s}+\frac{1}{s^{3/2}}e^{-\beta |z-y|^2/s}\right)m_p(dz)ds  \nonumber
\\
&\le & \int_{0}^t\int_{|z|_\rho<2M}\frac{1}{t-s}b(z)e^{-\alpha(|x|-|z|)^2/(t-s)}\frac{1}{s}e^{-\beta(|z|_\rho^2+|y|_\rho^2)/s}m_p(dz)ds \nonumber
\\
&+& \int_{0}^t\int_{|z|_\rho<2M}\frac{1}{t-s}b(z)e^{-\alpha(|x|-|z|)^2/(t-s)}\frac{1}{s^{3/2}}e^{-\beta |z-y|^2/s}m_p(dz)ds={\rm (I)}+{\rm(II)}. \label{e:3.64}
\end{eqnarray}
It holds for (I) in  \eqref{e:3.64}  that, for  $1/p_2+1/q_2=1$, when $0<\alpha<\beta/4<\beta_3/4$,
\begin{align}
{\rm(I)}&= \int_{0}^t\int_{|z|_\rho<2M}\frac{1}{t-s}b(z)e^{-\alpha(|x|-|z|)^2/(t-s)}\frac{1}{s}e^{-\beta(|z|_\rho^2+|y|_\rho^2)/s}m_p(dz)ds \nonumber
\\
&\lesssim e^{-\beta |y|^2_\rho/t}\int_{0}^t \frac{\|b_2\|_{p_2}}{(t-s)^{1-1/(2q_2)}}\frac{1}{s^{1-1/(2q_2)}} \left(\int_{|z|_\rho<2M}\frac{1}{\sqrt{t-s}}e^{-\frac{\alpha q_2(|x|-|z|)^2}{t-s}}\frac{1}{\sqrt{s}}e^{-\frac{\beta q_2|z|^2_\rho}{s}}m_p(dz)\right)^{\frac{1}{q_2}}ds \nonumber
\\ 
&\stackrel{r=|z|_\rho}{\lesssim} e^{-\beta |y|^2_\rho/t}\int_{0}^t \frac{\|b_2\|_{p_2}}{(t-s)^{1-1/(2q_2)}}\frac{1}{s^{1-1/(2q_2)}}\left(\int_{r<2M}\frac{r+\eps}{\sqrt{t-s}}e^{-\frac{\alpha q_2(|x|-r-\eps)^2}{t-s}}\frac{1}{\sqrt{s}}e^{-\frac{\beta q_2 r^2}{s}}dr\right)^{\frac{1}{q_2}}ds \nonumber
\\
&\stackrel{r<2M}{\lesssim} e^{-\beta |y|^2_\rho/t}\int_{0}^t \frac{\|b_2\|_{p_2}}{(t-s)^{1-1/(2q_2)}}\frac{1}{s^{1-1/(2q_2)}}\left(\int_{r<2M}\frac{1}{\sqrt{t-s}}e^{-\frac{\alpha q_2(|x|-r-\eps)^2}{t-s}}\frac{1}{\sqrt{s}}e^{-\frac{\beta q_2 r^2}{s}}dr\right)^{\frac{1}{q_2}} ds\nonumber
\\
&\lesssim  e^{-\beta |y|^2_\rho/t} \frac{1}{t^{1-1/q_2}}\cdot \frac{1}{t^{1/(2q_2)}}e^{-\alpha (|x|-\eps)^2/t}\|b_2\|_{q_2}\lesssim \frac{\|b_2\|_{p_2}}{t^{1-1/(2q_2)}}e^{-\alpha (|x|_\rho^2+|y|_\rho^2)/t}  \nonumber
\\
&\lesssim \frac{\|b_2\|_{p_2}}{t^{1-1/(2q_2)}}e^{-\alpha (|x|_\rho-|y|_\rho)^2/t}=\frac{\|b_2\|_{p_2}}{t^{1-1/(2q_2)}}e^{-\alpha (|x|-|y|)^2/t}=\|b_2\|_{p_2}t^{1/(2q_2)}p^0_{4, \alpha}(t,x,y),
\label{1243}
\end{align}
For (II) in  \eqref{e:3.64}, we select $q'_2, r_2$: $1/p_2+1/q'_2+1/r_2=1$ with  $q'_2$ being sufficiently large such  that $1/(2q'_2)+1/r_2>1/2$. This is possible because $p_2>2$. Therefore, for $\alpha, \beta$ such that $0<\alpha<\beta/4<\beta_3/4$, it holds 
\begin{align}
{\rm (II)}&= \int_{0}^t\frac{1}{\sqrt{s}} \int_{|z|_\rho <2 M}\frac{1}{t-s}e^{-\alpha (|x|-|z|)^2/(t-s)}\frac{1}{s}e^{-\beta |y-z|^2/s}b(z)m_p(dz)ds \nonumber
\\
&\le \int_0^t \int_{|z|_\rho<2M} \frac{1}{t-s}e^{-\alpha (|x|-|y|)^2/(t-s)}\frac{b(z)}{s^{3/2}}e^{-\alpha (|y|-|z|)^2/s}e^{-(\beta-\alpha )|y-z|^2/s}\,m_p(dz)ds\nonumber
\\
&\lesssim \int_0^t \frac{1}{s^{3/2-1/(2q'_2)-1/r_2}}\frac{\|b_2\|_{p_2} }{(t-s)^{1-1/(2q'_2)}} \nonumber
\\
&\qquad \times \left(\int_{|z|_\rho<2M}\frac{1}{\sqrt{t-s}}e^{-\alpha q'_2(|x|-|z|)^2/(t-s)}\frac{1}{\sqrt{s}}e^{-\alpha q'_2(|y|-|z|)^2/s}m_p(dz)\right)^{1/q'_2}\nonumber
\\
&\qquad \times \left(\int_{|z|_\rho<2M}\frac{1}{s}e^{-(\beta-\alpha)r_2|y-z|^2/s}m_p(dz)\right)^{1/r_2}ds \nonumber
\\
& \lesssim \int_0^t \frac{\|b_2\|_{p_2}}{s^{3/2-1/(2q'_2)-1/r_2}}\frac{1}{(t-s)^{1-1/(2q'_2)}}\nonumber
\\
&\qquad \times\left(\int_{r<2M+\eps}\frac{r+\eps}{\sqrt{t-s}}e^{-\alpha q'_2(|x|-r)^2/(t-s)}\frac{1}{\sqrt{s}}e^{-\alpha q'_2(|y|-r)^2/s}\,dr\right)^{1/q'_2}ds \nonumber
\\
& \lesssim \int_0^t \frac{\|b_2\|_{p_2}}{s^{3/2-1/(2q'_2)-1/r_2}}\frac{1}{(t-s)^{1-1/(2q'_2)}} \nonumber
\\
&\qquad \times \left(\int_{r<2M+\eps}\frac{1}{\sqrt{t-s}}e^{-\alpha q'_2(|x|-r)^2/(t-s)}\frac{1}{\sqrt{s}}e^{-\alpha q'_2(|y|-r)^2/s}\,dr\right)^{1/q'_2}ds \nonumber
\\
&\lesssim \frac{1}{t^{3/2-1/q'_2-1/r_2}}\cdot \frac{1}{t^{1/(2q'_2)}}e^{-\alpha (|x|-|y|)^2/t}\cdot \|b_2\|_{p_2} \nonumber
\\
&=\frac{1}{t^{3/2-1/(2q'_2)-1/r_2}}e^{-\alpha (|x|-|y|)^2/t}\|b_2\|_{p_2}=\|b_2\|_{p_2}t^{1/(2q_2')+1/r_2-1/2}\cdot p^0_{4, \alpha}(t,x,y), \label{1251}
\end{align}
given our choice of $q'_2, r_2$: $1/(2q'_2)+1/r_2>1/2$. In view of  \eqref{e:3.64}, adding up  \eqref{1251}  and  \eqref{1243} proves  for Case 3 that for  $0<\alpha<\beta_3/4$:
\begin{align*}
 \int_0^t \int_{|z|_\rho<2M}  p^0_{4, \alpha}(t-s, x,z)b(z)& |\nabla _zp(s, z,y)|m_p(dz)ds 
\\ 
& \lesssim \|b_2\|_{p_2} \left(t^{1/(2q_2)}+t^{1/(2q'_2)+1/r_2-1/2}\right)p^0_{4, \alpha}(t,x,y).
\end{align*}
Now combining Cases 1-3 yields when $0<\alpha<\beta_3/4$, 
\begin{align*}
 \int_0^t \int_{E} &  p^0_{4, \alpha}(t-s, x,z)b(z)|\nabla_z p(s, z,y)|m_p(dz)ds
 \\
&\lesssim \left(\|b_1\|_{p_1}+\|b_2\|_{p_2}\right) \left(t^{1/(2q_1)+1/2}+t^{1/q_2-1/2}+t^{1/(2q_2)}+t^{1/(2q'_2)+1/r_2-1/2}\right)p^0_{4, \alpha}(t,x,y).
\end{align*}
\end{proof}
For the next  and the last case which is $x,y\in D_\eps\cup \{a^*\}$ with $|x|_\rho<2M$ and $|y|_\rho<M$, we set the canonical form of the heat kernel of $X^0$ as follows: 
\begin{equation}\label{canonical-form2}
p^0_{5,\alpha}(t,x,y):=\left\{
  \begin{aligned}
   & \frac{1}{\sqrt{t}}e^{-\alpha |x-y|^2/t},\quad 
 x\in \IR_+,y\in \IR_+;&
\\ 
& \frac{1}{\sqrt{t}}e^{-\alpha (|x|^2+|y|_\rho^2)/t},\quad
x\in \IR_+,y\in D_\eps \cup \{a^*\} ;& 
\\ 
& \frac{1}{\sqrt{t}}e^{-\alpha (|x|_\rho^2+|y|^2)/t},\quad 
 x\in D_\eps \cup \{a^*\}, y\in \IR_+;& 
\\
 & \frac{1}{t}e^{-2\alpha |x-y|^2/t}, \quad  x,y\in D_\eps\cup \{a^*\} \text{ with } \max\{|x|_\rho,|y|_\rho\}> 4M;& 
\\ 
&   \frac{1}{\sqrt{t}}e^{-\alpha(|x|^2_\rho+|y|_\rho^2)/t}+\frac{1}{t}\left(1\wedge \frac{|x|_\rho}{\sqrt{t}}\right)\left(1\wedge \frac{|y|_\rho}{\sqrt{t}}\right)e^{-2\alpha |x-y|^2/t}, \text{ otherwise}. & 
  \end{aligned}
\right.
\end{equation}
The following theorem is immediate.
\begin{lem}
There exist  constants $C_{22}, C_{23}>0$ and $\alpha_{15}, \alpha_{16}>0$ such that it holds
\begin{equation*}
C_{22}\;p^0_{5,\alpha_{15}}(t,x,y)\le p(t,x,y)\le C_{23}\;p^0_{5,\alpha_{16}}(t,x,y), \qquad (t,x,y)\in (0,T]\times E\times E.
\end{equation*}
\end{lem}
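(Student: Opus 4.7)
The plan is to verify the claim directly via a case-by-case comparison of \eqref{canonical-form2} with the two-sided bounds for $p(t,x,y)$ provided by Theorem \ref{smalltime}, adjusting the exponential constants so that $\alpha_{15}$ is taken small enough (for the lower bound) and $\alpha_{16}$ large enough (for the upper bound). This is the same scheme used for the analogous lemmas comparing $p$ with $p^0_{1,\alpha}$ through $p^0_{4,\alpha}$ earlier in this section.

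First, for the three cases where at least one of $x,y$ lies in $\IR_+$, the defining expressions in $p^0_{5,\alpha}$ coincide with those in the bounds of Theorem \ref{smalltime}(i), once we invoke the elementary equivalences $\rho(x,y)^2=(|x|+|y|_\rho)^2\asymp |x|^2+|y|_\rho^2$ already noted in the remark after \eqref{RMK:1.6}. No further work is needed beyond absorbing the constant factor into the choice of $\alpha_{15},\alpha_{16}$.

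Next, for $x,y\in D_\eps\cup\{a^*\}$ with $\max\{|x|_\rho,|y|_\rho\}>4M$, I would first establish $|x-y|\asymp \rho(x,y)$. The upper estimate $\rho(x,y)\le |x-y|$ holds by \eqref{e:1.1}, while for the reverse one uses $|x-y|\le |x|+|y|=|x|_\rho+|y|_\rho+2\eps$; since $|x|_\rho+|y|_\rho>4M$ and $\eps\le 1/4$, the additive $2\eps$ is absorbed into a constant factor, so $|x-y|\le C(|x|_\rho+|y|_\rho)$ and hence $\rho(x,y)\ge |x-y|/C$. Since $|x|_\rho+|y|_\rho\ge 1$, the applicable bound in Theorem \ref{smalltime}(ii) is $\frac{C}{t}e^{-C'\rho(x,y)^2/t}$, and the equivalence $|x-y|\asymp \rho(x,y)$ makes this comparable to $\frac{1}{t}e^{-2\alpha|x-y|^2/t}$ upon choosing $\alpha_{15},\alpha_{16}$ appropriately.

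Finally, for $x,y\in D_\eps\cup\{a^*\}$ with $\max\{|x|_\rho,|y|_\rho\}\le 4M$, I would split on whether $|x|_\rho+|y|_\rho<1$ or $1\le |x|_\rho+|y|_\rho\le 8M$. In the first sub-case, the expression in $p^0_{5,\alpha}$ has exactly the two-term structure of the bound in Theorem \ref{smalltime}(ii), and the factor $2\alpha$ versus $\alpha$ in the exponent on $|x-y|^2/t$ is harmless once we pass to the free constants $\alpha_{15}<\alpha_{16}$. In the second sub-case, $|x|_\rho$, $|y|_\rho$ and $|x-y|$ are all bounded above by a constant depending only on $M$, so both the single Gaussian $\frac{C}{t}e^{-C'\rho(x,y)^2/t}$ from Theorem \ref{smalltime}(ii) and the two-term expression in $p^0_{5,\alpha}$ are of order $\frac{1}{t}$ times a bounded exponential in $1/t$, and their ratio is bounded between positive constants. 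The mildest obstacle is simply reconciling the fact that $p^0_{5,\alpha}$ uses one formula across the boundary $|x|_\rho+|y|_\rho=1$, whereas Theorem \ref{smalltime}(ii) switches form there; this is handled by checking the comparability in a neighborhood of that boundary, where both sides are of comparable size.
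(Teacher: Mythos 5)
Your plan — a case-by-case comparison of \eqref{canonical-form2} with Theorem~\ref{smalltime}, tuning the exponential constants — is exactly the comparison the paper has in mind when it declares this lemma immediate, and your handling of the cases with a point on $\IR_+$ and of $\max\{|x|_\rho,|y|_\rho\}>4M$ (via $\rho(x,y)\asymp|x-y|$ there) is correct. One slip in the set-up: since $p^0_{5,\alpha}$ is \emph{decreasing} in $\alpha$, the lower bound $C_{22}\,p^0_{5,\alpha_{15}}\le p$ forces $\alpha_{15}$ to be taken \emph{large} (dominating the constants $C_2,C_6,C_7,C_{12}$ of Theorem~\ref{smalltime}), and the upper bound forces $\alpha_{16}$ to be \emph{small}; your prescription (``$\alpha_{15}$ small, $\alpha_{16}$ large'') is reversed and would fail for $\rho(x,y)^2/t$ large no matter how $C_{22},C_{23}$ are chosen.

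The genuine gap is in the band $x,y\in D_\eps\cup\{a^*\}$ with $1\le|x|_\rho+|y|_\rho$ and $\max\{|x|_\rho,|y|_\rho\}\le 4M$, the only region where the form of $p^0_{5,\alpha}$ really differs from the form of the bound in Theorem~\ref{smalltime}. Your justification — the distances are bounded, so both sides are ``$t^{-1}$ times a bounded exponential in $1/t$'' with bounded ratio — is a non sequitur: $e^{-a/t}$ and $e^{-b/t}$ with bounded but unequal exponents have ratio blowing up as $t\downarrow0$, and the mismatch is present throughout the band, not just near $|x|_\rho+|y|_\rho=1$. A correct argument for the upper bound splits on $|x-y|$. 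If $|x-y|\le 1/2$, then $\rho(x,y)=|x-y|$, and since $\big||x|_\rho-|y|_\rho\big|\le|x-y|$ while $|x|_\rho+|y|_\rho\ge1$, both $|x|_\rho$ and $|y|_\rho$ are at least $1/4$; hence $\left(1\wedge\frac{|x|_\rho}{\sqrt{t}}\right)\left(1\wedge\frac{|y|_\rho}{\sqrt{t}}\right)$ is bounded below on $(0,T]$ and the \emph{second} term of $p^0_{5,\alpha_{16}}$ dominates $t^{-1}e^{-C_{14}\rho^2/t}$ once $2\alpha_{16}\le C_{14}$. If $|x-y|>1/2$, then $\rho\ge1/2$ while $|x|_\rho^2+|y|_\rho^2\le 32M^2$, so choosing $\alpha_{16}$ small depending on $M$ makes $C_{14}\rho^2-\alpha_{16}(|x|_\rho^2+|y|_\rho^2)$ bounded below by some $\delta>0$, and the discrepancy between $t^{-1}$ and the $t^{-1/2}$ in the first term is absorbed by $\sup_{0<t\le T}t^{-1/2}e^{-\delta/t}<\infty$. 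The lower bound in this band is easy: $\rho^2\le2(|x|_\rho^2+|y|_\rho^2)$, $\rho\le|x-y|$ and $t^{-1/2}\le\sqrt{T}\,t^{-1}$ give $p^0_{5,\alpha_{15}}\lesssim t^{-1}e^{-C_{12}\rho^2/t}$ for $\alpha_{15}$ large. A milder instance of the same cross-term issue also hides in your sub-case $|x|_\rho+|y|_\rho<1$: the first term of the bound in Theorem~\ref{smalltime} carries $\rho^2$, which can be much smaller than $|x|_\rho^2+|y|_\rho^2$ (e.g. $x=y$), so it is not matched by the first term of $p^0_{5,\alpha_{16}}$ alone; one again controls it by the product term when $\rho=|x-y|$, and by $\rho=|x|_\rho+|y|_\rho\ge(|x|_\rho^2+|y|_\rho^2)^{1/2}$ otherwise.
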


\begin{prop}\label{153}
 Let $M\ge 16$ be  fixed  as in \eqref{choice-of-M}. There exists some $\alpha_{17}>0$, such that for every  $0<\alpha<\alpha_{17}$, there exists a function $C_{24}(t)>0$ \emph{(}possibly depending on $\alpha$\emph{)} which is non-decreasing  in $t$, satisfying $C_{24}(t)\rightarrow 0$ as $t \rightarrow 0$, such that
\begin{align}
\int_0^t \int_{z\in E}p^0_{5,\alpha}(t-s,x,z)|b(z)|\left|\nabla_z p(s, z, y )\right| & m_p(dz)ds\le C_{24}(t)p^0_{5,\alpha}(t,x,y), \, 0<s<t\le T, \nonumber
\\
&x,y\in D_\eps \cup \{a^*\},\,|x|_\rho<2M,|y|_\rho<M. \label{1214}
\end{align}
\end{prop}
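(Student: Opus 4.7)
The plan is to mimic the three-case splitting used in Propositions \ref{1236}, \ref{P:3.6}, \ref{P:3.8}, \ref{1100} and \ref{152}, adapted to the ``both $x,y$ close to $a^*$'' regime. Without loss of generality we assume $b\geq 0$. Since  $|x|_\rho<2M$ and $|y|_\rho<M$, the canonical form reduces on the relevant pieces to
\begin{equation*}
p^0_{5,\alpha}(t,x,y)\asymp \frac{1}{\sqrt{t}}e^{-\alpha(|x|_\rho^2+|y|_\rho^2)/t}+\frac{1}{t}\Bigl(1\wedge \tfrac{|x|_\rho}{\sqrt t}\Bigr)\Bigl(1\wedge \tfrac{|y|_\rho}{\sqrt t}\Bigr)e^{-2\alpha|x-y|^2/t},
\end{equation*}
and for $t\leq T$ both $x$ and $y$ lie in a bounded region. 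I would split the $z$-integral into $z\in\IR_+$, $z\in D_\eps$ with $|z|_\rho\geq 4M$, and $z\in D_\eps\cup\{a^*\}$ with $|z|_\rho<4M$, and in each piece invoke Proposition \ref{P:3.1} or Proposition \ref{P:3.2} to trade $|\nabla_z p(s,z,y)|$ for the corresponding $s^{-1/2}p^0_{5,\beta}(s,z,y)$ (up to adjusting the constant $\beta$ in the exponent by a fixed factor, as was done throughout Section \ref{S:3}). Thus it suffices to establish the semigroup-type estimate
\begin{equation*}
\int_0^t\frac{1}{\sqrt s}\int_E p^0_{5,\alpha}(t-s,x,z)\,b(z)\,p^0_{5,\beta}(s,z,y)\,m_p(dz)\,ds\lesssim C(t)\,p^0_{5,\alpha}(t,x,y),
\end{equation*}
valid for all $0<\alpha<\beta/c$ with $\beta<\beta_3$, where $c$ is a fixed numerical constant and $C(t)\downarrow 0$ as $t\downarrow 0$.

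For \emph{Case 1} ($z\in\IR_+$), I would apply H\"{o}lder's inequality with $1/p_1+1/q_1=1$ as in \eqref{1224} and \eqref{102}. The triangle inequality $|x|_\rho^2/(t-s)+|z|^2/s \geq |x|_\rho^2/t$ (and the analogous one for $y$) lets one peel off $e^{-\alpha(|x|_\rho^2+|y|_\rho^2)/t}$, leaving a one-dimensional Gaussian integral of $b_1$ whose $L^{p_1}$ norm gives a factor $\|b_1\|_{p_1}t^{1/(2q_1)}$. For \emph{Case 2} ($|z|_\rho\geq 4M$), the large distance between $z$ and both $x,y$ means the exponentials $e^{-\alpha|x-z|^2/(t-s)}$ and $e^{-\beta|z-y|^2/s}$ are small; I would H\"{o}lder with $1/p_2+1/q_2=1$ and then use the two-dimensional semigroup identity \eqref{semigroup-2-d-BM} to collapse the Gaussian-product integral into $t^{-1}e^{-\alpha(|x|-|y|)^2/t}$, which is dominated by $p^0_{5,\alpha}(t,x,y)$ (using $|x|_\rho,|y|_\rho<2M$ to absorb $|x|-|y|$ into $|x-y|$).

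The \emph{main obstacle} lies in \emph{Case 3}, $z\in D_\eps\cup\{a^*\}$ with $|z|_\rho<4M$, because here all three points are bunched near $a^*$ and both $p^0_{5,\alpha}(t-s,x,z)$ and $p^0_{5,\beta}(s,z,y)$ carry the two-term ``boundary + interior'' structure, producing four cross-terms after expansion. Following the argument used for (I) and (II) in \eqref{eq:3.40} and \eqref{e:3.58}, I would estimate each cross-term separately: the pure interior-interior term $(t-s)^{-1}e^{-2\alpha|x-z|^2/(t-s)}\cdot s^{-1}e^{-2\beta|z-y|^2/s}$ is handled by \eqref{semigroup-2-d-BM} after H\"{o}lder, while the mixed terms involving the $s^{-1/2}e^{-\beta(|z|_\rho^2+|y|_\rho^2)/s}$ factor (or its $(t-s)$-analogue) are treated by passing to polar coordinates $r=|z|_\rho$ and applying the same 1-d heat-kernel trick used in \eqref{608} and \eqref{931}. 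To absorb both the factor $2$ in the exponential $e^{-2\alpha|x-y|^2/t}$ of $p^0_{5,\alpha}(t,x,y)$ and the prefactors $(1\wedge|x|_\rho/\sqrt t)(1\wedge|y|_\rho/\sqrt t)$, I would use $(1\wedge|x|_\rho/\sqrt t)\leq 1$ on the left-hand side and rely on the constant-doubling $2\alpha\mapsto \alpha$ permitted by choosing $\beta>4\alpha$, together with the triangle inequality $|x-y|^2\leq 2(|x-z|^2+|z-y|^2)$. In each cross-term the Beta-function integral $\int_0^t s^{-a}(t-s)^{-b}\,ds\lesssim t^{1-a-b}$ produces a positive power of $t$ because $p_1>1$ and $p_2>2$ ensure $1/q_1,1/q_2,1/r_2$ can be chosen so the resulting exponent is strictly positive. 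Finally, combining the three cases yields \eqref{1214} for all $0<\alpha<\alpha_{17}:=\beta_3/c$ with $C_{24}(t)\to 0$ as $t\downarrow 0$.
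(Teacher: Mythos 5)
Your overall plan (split over $z\in\IR_+$, $|z|_\rho$ large, $|z|_\rho$ small, H\"older in $z$ against $\|b_1\|_{p_1}$ or $\|b_2\|_{p_2}$, Beta-function integral in $s$) is the same skeleton the paper uses, but the way you propose to close the hardest piece does not work. First, the reduction ``it suffices to prove the semigroup-type estimate with $s^{-1/2}p^0_{5,\beta}(s,z,y)$ in place of $|\nabla_z p(s,z,y)|$'' is not available here: for $z,y$ both near $a^*$, Proposition \ref{P:3.2}(ii) gives $\frac{1}{s}e^{-\beta_2(|z|_\rho^2+|y|_\rho^2)/s}+\frac{1}{s^{3/2}}\bigl(1\wedge\frac{|y|_\rho}{\sqrt s}\bigr)e^{-\beta_2|z-y|^2/s}$, which is \emph{not} bounded by $s^{-1/2}p^0_{5,\beta}(s,z,y)$ (the latter's interior term carries an extra factor $1\wedge\frac{|z|_\rho}{\sqrt s}$), so one must work directly with the two-term gradient bound, as the paper does in \eqref{eq:3.66}. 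Second, and more seriously, in your Case 3 you propose to discard the prefactors via $\bigl(1\wedge\frac{|x|_\rho}{\sqrt{t-s}}\bigr)\le 1$, $\bigl(1\wedge\frac{|y|_\rho}{\sqrt s}\bigr)\le 1$ and to bound the interior-interior cross term by a plain Gaussian convolution, producing something of the form $C(t)\,t^{-1}e^{-c|x-y|^2/t}$ with $C(t)\asymp\|b_2\|_{p_2}t^{1/2-1/p_2}$. But $t^{-1}e^{-c|x-y|^2/t}$ is \emph{not} dominated by $p^0_{5,\alpha}(t,x,y)$ in the regime where $x,y$ are very close to $a^*$: take $|x|_\rho=|y|_\rho\le t$ and $|x-y|\le 2t$; then $p^0_{5,\alpha}(t,x,y)\asymp t^{-1/2}+\frac{|x|_\rho|y|_\rho}{t^2}\asymp t^{-1/2}$, while your bound is of order $t^{-1/2-1/p_2}$, so the required $C_{24}(t)$ would have to grow like $t^{-1/p_2}$ instead of tending to $0$. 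The same objection applies to your Case 2 claim that $t^{-1}e^{-\alpha(|x|-|y|)^2/t}$ can be ``absorbed'' into $p^0_{5,\alpha}(t,x,y)$; there the paper instead uses the separation $|z|_\rho\ge 4M$, via \eqref{e:3.79}, to peel off $e^{-\alpha(|x|_\rho^2+|y|_\rho^2)/t}$ and compare with the \emph{boundary} term $t^{-1/2}e^{-\alpha(|x|_\rho^2+|y|_\rho^2)/t}$ of the target, which is the correct comparison in this region.

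The missing idea is that for the interior$\times$interior term the vanishing boundary factors must be \emph{kept} on both sides and propagated through the convolution. The paper does this in term (IV) of \eqref{eq:3.66}: it observes that $b_2(z)\,dz\,ds$ belongs to the parabolic Kato class $\mathbf{K}_2$ on $\IR^2$ and invokes the Dirichlet-heat-kernel (3P-type) convolution inequality \cite[Lemma 3.6]{CKP}, which yields exactly \eqref{1111437}, i.e.\ a bound $\frac{1}{t}\bigl(1\wedge\frac{|x|_\rho}{\sqrt t}\bigr)\bigl(1\wedge\frac{|y|_\rho}{\sqrt t}\bigr)e^{-2\alpha|x-y|^2/t}N(t)$ with $N(t)\downarrow 0$, prefactors intact. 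Without such a lemma (or an equivalent argument reproducing the factors $1\wedge\frac{|x|_\rho}{\sqrt t}$ and $1\wedge\frac{|y|_\rho}{\sqrt t}$ in the output), your plan for Case 3 does not close; the remaining cross terms (your ``mixed'' terms) and Cases 1--2 are fine once they are compared against the boundary term of $p^0_{5,\alpha}(t,x,y)$ as in the paper's \eqref{124}, \eqref{139}, \eqref{e:3.85}, \eqref{210} and \eqref{338}.
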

\begin{proof} In this proof, we let  $\alpha, \beta$ satisfy $0<\alpha<\beta/2$   with $0<\beta<\beta_3$.  We also assume without loss of generality that $b\ge 0$. 
\\
{\it Case 1.} $z\in \IR_+$.  Since $0<\beta<\beta_3$,  we have for $0<\alpha<\beta/2$ that
\begin{eqnarray}
&& \int_{0}^t \frac{1}{\sqrt{s}}\int_{\IR_+}p^0_{5, \alpha} (t-s, x,z)b(z)p^0_{5, \beta}(s, z,y)m_p(dz)ds \nonumber
\\
&=& \int_0^t \frac{1}{\sqrt{s}}\int_{\IR_+}\frac{1}{\sqrt{t-s}}e^{-\alpha (|x|_\rho^2+|z|^2)/(t-s)}b(z)\frac{1}{\sqrt{s}}e^{-\beta(|z|^2+|y|_\rho^2)/s}m_p(dz)ds \nonumber
\\
&\le & e^{-\alpha (|x|_\rho^2+|y|^2_\rho)/t} \int_0^t s^{-1+1/(2q_1)}(t-s)^{-1/2}\int_{\IR_+}b(z)s^{-1/(2q_1)}e^{-\beta|z|^2/s}m_p(dz)ds \nonumber
\\
&\le &  e^{-\alpha (|x|_\rho^2+|y|^2_\rho)/t}\int_0^t s^{-1+1/(2q_1)}(t-s)^{-1/2} \|b_{1}\|_{p_1} \left(\int_{\IR_+}\left(s^{-1/(2q_1)}e^{-\beta|z|^2/s}\right)^{q_1} m_p(dz)\right)^{1/q_1}ds \nonumber
\\
&\lesssim & \|b_1\|_{p_1} e^{-\alpha (|x|_\rho^2+|y|^2_\rho)/t}\cdot \frac{t^{1/(2q_1)}}{\sqrt{t}}\le \|b_1\|_{p_1}t^{1/(2q_1)}p^0_{5, \alpha}(t,x,y).\label{210}
\end{eqnarray}
In view of Proposition \ref{P:3.1}, \eqref{210}  implies that for $0<\alpha<\beta_3/2$, it holds
\begin{equation*}
\int_0^t \int_{z\in \IR_+}p^0_{5,\alpha}(t-s,x,z)|b(z)|\left|\nabla_z p(s, z, y )\right| m_p(dz)ds \lesssim \|b_1\|_{p_1} t^{1/2q_1}\cdot p^0_{5, \alpha}(t,x,y).
\end{equation*}
\\
{\it Case 2.} $z\in D_\eps$, $|z|_\rho\ge 4M$.   Note that $|x|_\rho<2M$ and $|y|_\rho<M$, it thus holds  
\begin{equation}\label{e:3.79}
|x-z|^2\ge |x|_\rho^2/2+|x-z|^2/2, \text{ and } |y-z|^2\ge |y|^2_\rho/2+|y-z|^2/2.
\end{equation}
By Proposition \ref{P:3.2},   it holds for $0<\alpha<\beta/2<\beta_3/2$ that
\begin{eqnarray}
&&\int_0^t \int_{z\in D_\eps, |z|_\rho\ge 4M}p^0_{5,\alpha}(t-s,x,z)|b(z)|\left|\nabla_z p(s, z, y )\right| m_p(dz)ds  \nonumber
\\
&\lesssim &\int_{0}^t \int_{|z|_\rho\ge 4M}\frac{1}{t-s}e^{-2\alpha |x-z|^2/(t-s)}\frac{1}{s^{3/2}}e^{-2\beta |y-z|^2/s}b(z)m_p(dz)ds \nonumber
\\
&\stackrel{\eqref{e:3.79}}{\le} & e^{-\alpha(|x|^2_\rho+|y|^2_\rho)/t}\int_{0}^t \int_{|z|_\rho\ge 4M} \frac{1}{t-s}e^{-\alpha|x-z|^2/(t-s)}\frac{1}{s^{3/2}}e^{-\beta|y-z|^2/s}b(z)m_p(dz)ds \nonumber
\\
&\stackrel{|y-z|\ge 3M}{\lesssim} &e^{-\alpha(|x|^2_\rho+|y|^2_\rho)/t}\int_{0}^t  \int_{|z|_\rho\ge 4M} \frac{1}{t-s}e^{-\alpha|x-z|^2/(t-s)}\frac{1}{s}e^{-\alpha |y-z|^2/s}b(z)m_p(dz)ds \nonumber
\\
&\lesssim &\|b_2\|_{p_2}e^{-\alpha(|x|^2_\rho+|y|^2_\rho)/t} \int_0^t \frac{1}{s^{1-1/q_2}}\frac{1}{(t-s)^{1-1/q_2}} \nonumber
\\
&&  \left(\int_{|z|_\rho\ge 4M}\left(\frac{1}{(t-s)^{1/q_2}}e^{-\alpha|x-z|^2/(t-s)}\frac{1}{s^{1/q_2}}e^{-\alpha|y-z|^2/s}\right)^{q_2}m_p(dz)\right)^{1/q_2}ds \nonumber
\\
&\lesssim &\|b_2\|_{p_2}e^{-\alpha(|x|^2_\rho+|y|^2_\rho)/t} \int_0^t \frac{1}{s^{1-1/q_2}}\frac{1}{(t-s)^{1-1/q_2}} \nonumber
\\
&& \left(\int_{|z|_\rho\ge 4M}\frac{1}{t-s}e^{-\alpha q_2|x-z|^2/(t-s)}\frac{1}{s}e^{-\alpha q_2|y-z|^2/s}m_p(dz)\right)^{1/q_2}ds \nonumber
\\
&\lesssim  &\|b_2\|_{p_2}e^{-\alpha(|x|^2_\rho+|y|^2_\rho)/t}  \frac{t^{1/q_2}}{t^{1-1/q_2}}\cdot \frac{1}{t^{1/q_2}} \le \|b_2\|_{p_2}t^{1/2-1/p_2}p^0_{5, \alpha}(t,x,y),\label{338}
\end{eqnarray}
where the last $``\lesssim "$ follows from the computation similar to that for  \eqref{semigroup-2-d-BM}.
\\
{\it Case 3.} $z\in D_\eps\cup \{a^*\}$, $|z|_\rho<4M$. From the proof to  Proposition \ref{P:3.2},  we know for  $0<\beta<\beta_3$, whether $|y|_\rho, |z|_\rho$ are greater than $1$ or not, it always holds
\begin{equation*}
|\nabla_z p(s,z,y)|\lesssim e^{-\beta(|z|_\rho^2+|y|_\rho^2)/s}+\frac{1}{s^{3/2}}\left(1\wedge \frac{|y|_\rho}{\sqrt{s}}\right)e^{-\beta |z-y|^2/s}.
\end{equation*}
It therefore holds
\begin{eqnarray}
&&\int_0^t\int_{z\in D_\eps \cup \{a^*\}, |z|_\rho<4M}p^0_{5,\alpha}
(t-s, x,z)b(z)|\nabla_z p(s,z,y)|m_p(dz)ds\nonumber
\\
&\lesssim &\int_0^t\int_{|z|_\rho<4M}\left[\frac{1}{\sqrt{t-s}}e^{-\alpha(|x|_\rho^2+|z|^2_\rho)/(t-s)}+\frac{1}{t-s}\left(1\wedge \frac{|x|_\rho}{\sqrt{t-s}}\right)\left(1\wedge \frac{|z|_\rho}{\sqrt{t-s}}\right)e^{-2\alpha|x-z|^2/(t-s)}\right]\nonumber
\\
&\times &  b(z)\left[\frac{1}{s}e^{-\beta(|y|_\rho^2+|z|_\rho^2)/s}+\frac{1}{s^{3/2}}\left(1\wedge \frac{|y|_\rho}{\sqrt{s}}\right)e^{-\beta |y-z|^2/s}\right]m_p(dz)ds \nonumber
\\
 &= & {\rm (I)+(II)+(III)+(IV)}.\label{eq:3.66}
\end{eqnarray}
The right hand side of \eqref{eq:3.66} can be expanded into the sum of four terms. We now bound these four terms on right hand side of \eqref{eq:3.66} from above term by term. In the following it is always assumed $0<\alpha<\beta/2$ and $0<\beta<\beta_3$.  First of all,  for $1/p_2+1/q_2=1$,  we have by H\"{o}lder's inequality,
\begin{align}
{\rm (I)}&= \int_0^t \frac{1}{\sqrt{t-s}}e^{-\alpha (|x|_\rho^2+|z|_\rho^2)/(t-s)}\int_{z\in D_\eps\cup\{a^*\}, |z|_\rho<4M}\frac{b(z)}{s}e^{-\beta (|z|_\rho^2+|y|_\rho^2)/s}m_p(dz)ds \nonumber 
\\
&\le e^{-\alpha(|x|_\rho^2+|y|_\rho^2)/t}\int_0^t \frac{1}{(t-s)^{1/2}}\frac{1}{s^{1-1/(2q_2)}}\int_{ |z|_\rho<4M}\frac{b(z)}{s^{1/(2q_2)}}e^{-\beta |z|^2_\rho/s}m_p(dz)ds \nonumber
\\
&\lesssim e^{-\alpha(|x|_\rho^2+|y|_\rho^2)/t}\int_0^t \frac{1}{(t-s)^{1/2}}\frac{ \|b_2\|_{p_2} }{s^{1-1/(2q_2)}}\left(\int_{|z|_\rho<4M}\frac{1}{\sqrt{s}}e^{-\beta q_2 |z|^2_\rho/s}m_p(dz)\right)^{1/q_2}ds\nonumber
\\
&\stackrel{r=|z|_\rho}{=} e^{-\alpha(|x|_\rho^2+|y|_\rho^2)/t}\int_0^t \frac{1}{(t-s)^{1/2}}\frac{\|b_2\|_{p_2}}{s^{1-1/(2q_2)}}\left(\int_{r<4M}\frac{r+\eps}{\sqrt{s}}e^{-\beta q_2 r^2/s}\;  dr \right)^{1/q_2}ds  \nonumber
\\
&\lesssim \|b_2\|_{p_2}\frac{t^{1/(2q_2)}}{\sqrt{t}}e^{-\alpha(|x|_\rho^2+|y|_\rho^2)/t}\le \|b_2\|_{p_2} t^{1/(2q_2)}p^0_{5, \alpha}(t,x,y),\label{124}
\end{align}
where the last $``\lesssim"$ follows from 
\begin{equation*}
\int_{r<4M}\frac{r+\eps}{\sqrt{s}}e^{-\beta q_2 r^2_\rho/s}\;  dr \le (4M+\eps) \int_{r<4M}\frac{1}{\sqrt{s}}e^{-\beta q_2 r^2/s}dr\lesssim \int_0^\infty \frac{1}{\sqrt{s}}e^{-\beta q_2 r^2/s}dr=\frac{1}{2}\sqrt{\frac{\pi }{\beta q_2}}.
\end{equation*}
Secondly, with the following version of  the triangle inequality 
\begin{equation}\label{triangle-ineq-v3}
|z|_\rho^2/(t-s)+|z-y|^2/s\ge |y|^2_\rho/t,
\end{equation}
we have for $1/p_2+1/q_2=1$ that 
\begin{align}
{\rm (II)}&= \int_{0}^t \int_{z\in D_\eps\cup \{a^*\}, |z|_\rho<4M}\frac{1}{(t-s)^{1/2}}e^{-\alpha (|x|_\rho^2+|z|_\rho^2)/(t-s)} b(z)\frac{1}{s^{3/2}}\left(1\wedge \frac{|y|_\rho}{\sqrt{s}}\right) e^{-\beta |z-y|^2/s} m_p(dz)ds \nonumber
\\
&\le \int_{0}^t \frac{1}{(t-s)^{1/2}}e^{-\alpha (|x|_\rho^2+|z|_\rho^2)/(t-s)}\int_{D_\eps}\frac{1}{s^{3/2}}e^{-\beta |z-y|^2/s}b(z)m_p(dz)ds \nonumber
\\
&= \int_{0}^t \frac{1}{s^{3/2-1/q_2}}\frac{1}{(t-s)^{1/2}}e^{-\alpha (|x|_\rho^2+|z|_\rho^2)/(t-s)}e^{-\alpha |z-y|^2/s}\int_{D_\eps}\frac{1}{s^{1/q_2}}e^{-(\beta-\alpha) |z-y|^2/s}b(z)m_p(dz)ds \nonumber
\\
&\stackrel{\eqref{triangle-ineq-v3}}{\lesssim} e^{-\alpha (|x|_\rho^2+|y|_\rho^2)/t} \int_{0}^t \frac{1}{s^{3/2-1/q_2}} \frac{\|b_2\|_{p_2}}{(t-s)^{1/2}} \left(\int_{D_\eps}\left(\frac{1}{s^{1/q_2}}e^{-(\beta-\alpha)|z-y|^2/s}\right)^{q_2} m_p(dz)\right)^{1/q_2}ds  \nonumber
\\
&\lesssim \frac{1}{t^{1-1/q_2}}e^{-\alpha(|x|_\rho^2+|y|^2_\rho)/t}\|b_2\|_{p_2}= \frac{\|b_2\|_{p_2}}{t^{1/p_2}}e^{-\alpha(|x|_\rho^2+|y|^2_\rho)/t}\le \|b_2\|_{p_2}t^{1/2-1/p_2}p^0_{5, \alpha}(t,x,y), \label{139}
\end{align}
where the last $``\lesssim "$ follows from
\begin{equation*}
\int_{D_\eps}\left(\frac{1}{s^{1/q_2}}e^{-(\beta-\alpha)|z-y|^2/s}\right)^{q_2} m_p(dz)\lesssim \int_{\IR^2}\frac{1}{s}e^{-(\beta-\alpha)q_2|z-y|^2/s} dz=\frac{\pi }{(\beta -\alpha)q_2}.
\end{equation*}
Next,  we select  $q'_2, r_2>0$ satisfying $1/p_2+1/q'_2+1/r_2=1$ and  $1/(2r_2)+1/q'_2>1/2$, which is possible since $p_2>2$.  Again we need the following version of triangle inequality:
\begin{equation}\label{triangle-ineq-v4}
|x-z|^2/(t-s)+z|_\rho^2/s\ge |x|^2_\rho/t.
\end{equation}
It follows  that 
\begin{align}
\nonumber {\rm (III)}&= \int_0^t \int_{ |z|_\rho<4M}\frac{1}{t-s}\left(1\wedge \frac{|x|_\rho}{\sqrt{t-s}}\right)\left(1\wedge \frac{|z|_\rho}{\sqrt{t-s}}\right)e^{-2\alpha|x-z|^2/(t-s)}\frac{b(z)}{s}e^{-\beta(|z|_\rho^2+|y|_\rho^2)/s}m_p(dz)ds
\\
\nonumber &\le \int_0^t \int_{|z|_\rho<4M}\frac{1}{t-s}e^{-2\alpha|x-z|^2/(t-s)}\frac{b(z)}{s}\;e^{-\beta (|z|_\rho^2+|y|^2_\rho)/s}b(z)m_p(dz)ds
\\
\nonumber &= \int_0^t e^{-\alpha|x-z|^2/(t-s)}e^{-\alpha (|z|_\rho^2+|y|_\rho^2)/s} \int_{|z|_\rho<4M}\frac{1}{t-s}e^{-\alpha|x-z|^2/(t-s)}\frac{b(z)}{s}e^{-(\beta-\alpha)(|z|_\rho^2+|y|^2_\rho)/s}m_p(dz)ds
\\
\nonumber &\stackrel{\eqref{triangle-ineq-v4}}{\lesssim}  e^{-\alpha (|x|_\rho^2+|y|_\rho^2)/t} \int_0^t \frac{1}{s^{1-1/(2r_2)}}\frac{\|b_2\|_{p_2}}{(t-s)^{1-1/q'_2}}\left(\int_{|z|_\rho<4M}\frac{1}{t-s}e^{-\alpha q'_2|x-z|^2/(t-s)}m_p(dz)\right)^{1/q'_2}
\\
&\quad \left(\int_{|z|_\rho<4M}\frac{1}{\sqrt{s}}e^{-(\beta-\alpha ) r_2|z|_\rho^2/s}m_p(dz)\right)^{1/r_2}ds  \nonumber
\\
&\lesssim \frac{1}{t^{1-1/q'_2-1/(2r_2)}}e^{-\alpha (|x|_\rho^2+|y|_\rho^2)/t}\cdot \|b_2\|_{p_2}\le \|b_2\|_{p_2}t^{1/(2r_2)+1/q_2'-1/2}p^0_{5, \alpha}(t,x,y), \label{e:3.85}
\end{align}
where for the last $``\lesssim "$ in \eqref{e:3.85}, it is not hard to see that the first integral in $z$  on the left hand side is comparable to a constant, and so is the second integral since
\begin{align*}
\int_{|z|_\rho<4M}\frac{1}{\sqrt{s}}e^{-(\beta-\alpha ) r_2|z|_\rho^2/s}m_p(dz)&\stackrel{r=|z|_\rho}{=}\int_{r\le 4M}\frac{r+\eps}{\sqrt{s}}e^{-(\beta -\alpha)r_2 r^2/s} dr  
\\
&\lesssim  \int_0^\infty \frac{1}{\sqrt{s}}e^{-(\beta -\alpha)r_2 r^2/s}dr
=\frac{1}{2}\sqrt{\frac{\pi}{(\beta -\alpha)r_2}}.
\end{align*}
Note in \eqref{e:3.85}
$1/(2r_2)+1/q'_2>1/2$ by our selection of $q'_2$ and $r_2$. 
Finally, in order to check the last term of \eqref{eq:3.66}, we need to verify the following:
\begin{align}
(IV) &:= \int_0^t\frac{1}{\sqrt{s}}\int_{ |z|_\rho<4M}\frac{1}{t-s}\left(1\wedge \frac{|x|_\rho}{\sqrt{t-s}}\right)\left(1\wedge \frac{|z|_\rho}{\sqrt{t-s}}\right)e^{-2\alpha|x-z|^2/(t-s)} \nonumber
\\
&\quad \cdot   \frac{b(z)}{s}\left(1\wedge \frac{|y|_\rho}{\sqrt{s}}\right)e^{-\beta|y-z|^2/s}m_p(dz)ds\lesssim \frac{1}{t}\left(1\wedge \frac{|x|_\rho}{\sqrt{t}}\right)\left(1\wedge \frac{|y|_\rho}{\sqrt{t}}\right)e^{-2\alpha|x-y|^2/s}.\label{1111437}
\end{align}
Indeed, since $\alpha<\beta /2$, we may take $D=D_\eps\cup \{a^*\}$ and $\nu(dzds)= b_2(z)dzds$  which belongs to parabolic Kato class $\mathbf{K}_2$ on $\IR^2$. Hence  \cite[Lemma 3.6]{CKP}  immediately yields that 
\begin{eqnarray*}
&&\int_0^t\frac{1}{\sqrt{s}}\int_{z\in D_\eps\cup \{a^*\}}\frac{1}{t-s}\left(1\wedge \frac{|x|_\rho}{\sqrt{t-s}}\right)\left(1\wedge \frac{|z|_\rho}{\sqrt{t-s}}\right)e^{-2\alpha|x-z|^2/(t-s)}
\\
&\times & \frac{b(z)}{s}\left(1\wedge \frac{|y|_\rho}{\sqrt{s}}\right)e^{-\beta|y-z|^2/s}m_p(dz)ds\lesssim \frac{1}{t}\left(1\wedge \frac{|x|_\rho}{\sqrt{t}}\right)\left(1\wedge \frac{|y|_\rho}{\sqrt{t}}\right)e^{-2\alpha|x-y|^2/t}\cdot N(t),
\end{eqnarray*}
where, as defined in \cite{CKP},
\begin{align*}
N(t)&=\sup_{\tau >0, x\in \IR^2} \int_{(\tau -t, \tau )\times \IR^2}\frac{ b(y)}{(\tau -s)^{3/2}}e^{-c_1 |x-y|^2/(\tau -s)}dsdy  
\\
&+\sup_{s>0, x\in \IR^2} \int_{(s, s+t)\times \IR^2}\frac{ b(y)}{(\tau -s)^{3/2}}e^{-c_1 |x-y|^2/(\tau -s)} d\tau dy <\infty, \quad \text{for all }t\le T,
\end{align*}
where $c_1>0$ is a small positive constant can be taken as, for instance, $c_1=\alpha /32$ (for more details, interested readers may refer to \cite[p.1114]{CKP}). It also holds that $N(t)\downarrow 0$ as $t\downarrow 0$. This verifies \eqref{1111437}. Adding up (I)-(IV) shows that  for each pair of $\alpha, \beta$ satisfying $0<\alpha<\beta/2<\beta_3/2$, it holds
\begin{eqnarray}
&& \int_0^t\int_{z\in D_\eps \cup \{a^*\}, |z|_\rho<4M}p^0_{5,\alpha}
(t-s, x,z)|b(z)|\nabla_z p(s,z,y)|m_p(dz)ds  \nonumber
\\
&\le & \left[\|b_2\|_{p_2}\left(t^{1/(2q_2)}+t^{1/2-1/p_2}+t^{1/q'_2+1/(2r_2)-1/2}\right)+N(t)\right] p^0_{5,\alpha}(t,x,y). \label{e:3.78}
\end{eqnarray}
Now combining Cases 1-3 yields that for any pair of $\alpha, \beta$ satisfying $0<\alpha<\beta/2$ and $0<\beta<\beta_3$, it holds
\begin{align*}
&\quad \int_0^t\int_Ep^0_{5,\alpha}
(t-s, x,z)|b(z)|\nabla_z p(s,z,y)|m_p(dz)ds \lesssim C_{20}(t)p^0_{5, \alpha}(t,x,y),
\end{align*}
where 
\begin{equation*}
C_{24}(t):=\left(\|b_1\|_{p_1}+\|b_2\|_{p_2}\right)\left(t^{1/(2q_1)}+t^{1/(2q_2)}+t^{1/2-1/p_2}+t^{1/2-1/q'_2-1/2r_2}\right)+N(t).
\end{equation*}
\end{proof}

Now our goal is to establish two-sided heat kernel bounds for  BMVD with drift  $X$. Toward  this purpose, we set $k_0(t,x,y)=p(t,x,y)$ and then inductively define
\begin{equation}\label{def-k_n}
k_n(t,x,y):=\int_0^t \int_{E} k_{n-1}(t-s, x,z)b(z)\cdot \nabla_zp(s,z,y)dzds, \quad \text{for } n\ge 1.
\end{equation}

\begin{lem}\label{L:3.16}
There exists $T_1>0$  such that for all $x,y\in E$ and $n\ge 1$,  $k_n(t,x,y)$ is well-defined on $(0, T_1]$. Furthermore, there exist $C_i>0,25\le i\le 31$, such that $\sum_{n=0}^\infty k_n(t,x,y)$ absolutely converges  on $(0, T_1]\times E\times E$  with the  following upper bound:
\begin{description}
\item{\rm (i)} For $x \in \IR_+$ and $y\in E$ or $y\in \IR_+$ and $x\in E$,
\begin{equation*}
\sum_{n=0}^\infty k_n(t,x,y)\le\frac{C_{25}}{\sqrt{t}}e^{-C_{26}\rho(x,y)^2/t};
\end{equation*}
\item{\rm (ii)} For $x,y\in D_\eps\cup \{a^*\}$ with $\max\{|x|_\rho,|y|_\rho\}<1$,
\begin{align*}\
\sum_{n=0}^\infty k_n(t,x,y)\le\frac{C_{27}}{\sqrt{t}}e^{-C_{28}\rho(x,y)^2/t}+\frac{C_{27}}{t}\left(1\wedge
\frac{|x|_\rho}{\sqrt{t}}\right)\left(1\wedge
\frac{|y|_\rho}{\sqrt{t}}\right)e^{-C_{29}|x-y|^2/t};
\end{align*}
and when $\max\{|x|_\rho,|y|_\rho\}\ge 1$,
\begin{equation*}
\sum_{n=0}^\infty k_n(t,x,y) \le \frac{C_{30}}{t}e^{-C_{31}\rho(x,y)^2/t}.
\end{equation*}
\end{description}
\end{lem}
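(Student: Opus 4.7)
The plan is to iterate the Duhamel identity \eqref{def-k_n} by induction on $n$, combining the seven integral contraction estimates of Propositions \ref{121051}, \ref{1236}, \ref{P:3.6}, \ref{P:3.8}, \ref{1100}, \ref{152} and \ref{153} into a single geometric-series argument. These seven propositions were custom-built for the seven disjoint geometric regimes of $(x,y) \in E \times E$ (depending on which of $\IR_+$ and $D_\eps \cup \{a^*\}$ each of $x, y$ lies in, and on the radial thresholds $1$, $M$, $2M$), so together they exhaust every possible location of the target pair.

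First, I would fix a single $\alpha_* > 0$ admissible for each of the seven propositions simultaneously (finitely many conditions of the form $\alpha_* < \alpha_j$), and small enough that every canonical form dominates $p$ in the sense $p(t, x, z) \le C^* p^0_{i, \alpha_*}(t, x, z)$ for each $i \in \{0,\ldots,5\}$ and every $(t, x, z) \in (0, T] \times E \times E$. The latter is possible because the various $p^0_{i, \alpha}$ differ from $p^0_{0, \alpha}$ only by factors of $2$ or $4$ in certain Gaussian exponents, so a small enough $\alpha_*$ makes each of them dominate $p^0_{0, \alpha'}$ (and hence $p$, by Theorem \ref{smalltime}) for an appropriate $\alpha'$. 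Next I would pick $T_1 \in (0, T]$ small enough that every coefficient function $C_j(t)$ in the seven propositions satisfies $C_j(T_1) \le 1/2$; this is possible since each $C_j(t) \to 0$ as $t \to 0$.

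Now fix $(x, y) \in E \times E$, let $i = i(x, y) \in \{0, \ldots, 5\}$ denote the index of the canonical form attached to its region, and prove by induction on $n$ the pointwise estimate
\begin{equation*}
|k_n(t, x, z)| \le C^* \cdot 2^{-n} \cdot p^0_{i, \alpha_*}(t, x, z), \qquad (t, z) \in (0, T_1] \times E.
\end{equation*}
The base case $n = 0$ is just $k_0 = p \le C^* p^0_{i, \alpha_*}$ by the choice of $\alpha_*$. For the inductive step, substitute the hypothesis into \eqref{def-k_n}; the unique one of the seven propositions whose target-region hypothesis is satisfied by $(x, y)$ then yields
\begin{equation*}
|k_n(t, x, y)| \le C^* 2^{-(n-1)} \!\int_0^t \!\!\int_E p^0_{i, \alpha_*}(t-s, x, z)\, |b(z)|\, |\nabla_z p(s, z, y)|\, m_p(dz)\, ds \le C^* 2^{-n} p^0_{i, \alpha_*}(t, x, y).
\end{equation*}
Summing the geometric series gives the absolute convergence of $\sum_n k_n$ on $(0, T_1] \times E \times E$ and the pointwise bound $\sum_n |k_n(t, x, y)| \le 2 C^* p^0_{i, \alpha_*}(t, x, y)$. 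Reading off the cases of $p^0_{i, \alpha_*}$ and matching them with the right-hand side of Theorem \ref{smalltime} then yields assertions (i) and (ii) of the lemma.

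The only genuine obstacle has already been disposed of inside the seven propositions themselves: each $p^0_{i, \alpha}$ was engineered with extra slack in certain Gaussian exponents (the factors $2$, $4$, or $8$ in their definitions) precisely so that a triangle-type inequality (as in \eqref{triangle-v1}, \eqref{triangle-inequality-2}, \eqref{443}, \eqref{choice-of-M}, \eqref{triangle-ineq-v3} or \eqref{triangle-ineq-v4}) recovers the original exponent after convolution with $|b(z)|\, m_p(dz)$. Granting those seven estimates, the present lemma reduces to the clean induction above, modulo the one bookkeeping point that the single $\alpha_*$ and single $T_1$ can be chosen to work for all seven regimes at once.
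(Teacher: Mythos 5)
Your overall strategy --- iterate \eqref{def-k_n}, feed in the seven contraction propositions, fix one admissible $\alpha_*$ and one small $T_1$, and sum a geometric series --- is exactly the paper's strategy, and your base case, your choice of $T_1$, and the $2^{-n}$ bookkeeping are fine. The problem is that the induction, as you state it, does not close. You freeze the canonical index $i=i(x,y)$ of the \emph{target} pair and claim at level $n$ the bound $|k_n(t,x,z)|\le C^*2^{-n}p^0_{i,\alpha_*}(t,x,z)$ for \emph{all} $z\in E$; but your inductive step invokes only the single proposition matched to $(x,y)$ and therefore re-establishes the estimate only at the one point $z=y$. The hypothesis you consume at level $n-1$ (a bound at every $z$, in the form $p^0_{i,\alpha_*}(t-s,x,z)$, since the Duhamel integral runs over all of $E$) is strictly stronger than the conclusion you produce at level $n$, so the next step has nothing to feed on. Re-proving the bound at other $z$ would require the propositions matched to the pairs $(x,z)$, which use \emph{different} canonical forms ($p^0_1$ or $p^0_2$ when the second argument is in $\IR_+$, and $p^0_3$, $p^0_4$, $p^0_5$ when it is in $D_\eps\cup\{a^*\}$), and converting one form into another costs a factor inside the Gaussian exponent (e.g.\ $p^0_{0,\alpha}\le p^0_{3,\alpha/4}$ holds, but $p^0_{0,\alpha}\lesssim p^0_{3,\alpha}$ fails); if such a conversion recurs at every iteration the exponent degrades geometrically in $n$, so it cannot simply be ``absorbed,'' contrary to what your closing paragraph suggests.

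The paper organizes the induction by the region of the \emph{first} variable instead: for $x\in\IR_+$ it proves, for all $z\in E$ simultaneously, $|k_n(t,x,z)|\le c_1c_3^{\,n}p^0_{0,c_2}(t,x,z)$, and this closes because Propositions \ref{121051} and \ref{1236} use the same form $p^0_{0,\alpha}$ and together cover every position of the second argument; that worked case gives statement (i), and the repair of your argument is immediate there. For $x\in D_\eps\cup\{a^*\}$ (the ``remaining cases'' the paper only asserts are analogous), the propositions for the various positions of the second argument --- \ref{P:3.6} or \ref{P:3.8} toward $\IR_+$, and \ref{1100}, \ref{152}, \ref{153} toward $D_\eps\cup\{a^*\}$ --- carry different canonical forms, so one must set up the level-$n$ estimate on all of $E$ in forms compatible with each of these propositions at once, arranging the cross-region comparisons so that no unfavorable exponent conversion is repeated at each step. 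That bookkeeping is precisely the delicate point of this lemma, and your proposal does not supply it.
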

\begin{proof}
In view of Proposition \ref{121051} to Proposition \ref{153},  we use induction to show the following upper bounds for  $k_n(t,x,y)$, by dividing our argument into several cases depending on the  regions of $x$ and $y$. For example, when  $x\in \IR_+,\, y\in E$. By Propositions \ref{121051} and  \ref{1236}, we can select $t_1>0$ sufficiently small so that  for some $c_1, c_2>0$ and   $0<c_3<1/2$, it holds 
\begin{equation*}
k_0(t,x,y)=p(t,x,y)\le c_1p^0_{0, c_2}(t,x,y), \quad \text{for all }x\in \IR_+, y\in E, t\in (0, t_1]
\end{equation*}
and
\begin{align*}
|k_n(t,x,y)|\le c_1\cdot c_3^{n-1} \int_0^t\int_{E} p^0_{0,c_2}(t-s, x,z)& |b(z)||\nabla_zp(s,z,y)|  m_p(dz)ds \le c_1\cdot c_3^np^0_{0,c_2}(t,x,y), 
\\
&\quad \text{for all }x\in \IR_+,y\in E, \,t\in (0, t_1], \text{ and } n=1,2,\cdots 
\end{align*}
Therefore, 
\begin{align*}
 \sum_{n=0}^\infty k_n(t,x,y)\le \sum_{n=0}^\infty |k_n(t,x,y)|\le \frac{c_1}{1-c_3}p^0_{0,c_2}(t,x,y), \, \text{for }x\in \IR_+, y\in E, t\in (0, t_1].
\end{align*}
Note that when $x$ and $y$ are both in $\IR_+$, $|x-y|=\rho(x,y)$, and when $x\in \IR_+$, $y\in D_\eps \cup \{a^*\}$, $|x|+|y|_\rho= \rho(x,y)$. It thus holds for some $c_4>0$ that
\begin{equation*}
\sum_{n=0}^\infty k_n(t,x,y)\lesssim \frac{1}{\sqrt{t}}e^{-c_4\rho(x,y)^2/t} \quad \text{for }x\in \IR_+, y\in E, t\in (0, t_1].
\end{equation*}

The remaining cases can all be taken care of in the exact manner. Each case can be established on an interval $t\in (0, t_i]$, for some $t_i>0$.  The proof  is thus  complete by taking $T_1=\min_i t_i$, for $i=1,2,\cdots$ (finitely many).
\end{proof}
We now claim the following lemma which combined with Lemma \ref{L:3.16} gives the upper bounds in Theorem \ref{T:smalltime}.
\begin{lem}\label{L:3.17}
Let
\begin{equation}\label{def-density-X}
p^b(t,x,y):=\sum_{n=0}^\infty k_n(t,x,y), \quad \text{for }t>0, x,y\in E.
\end{equation}
Then the family $\{p^b(t,x,y)\}_{t>0, x,y\in E}$  is indeed the transition density function  of $X$.   In particular, for any $T_2>0$, the series
%\begin{equation}\label{121159}
$\displaystyle{\sum_{n=0}^\infty k_n(t,x,y)}$
%\end{equation}
 converges  on $(0,T_2]\times E\times E$ with the same type of upper bound as that  in Lemma \ref{L:3.16}.
\end{lem}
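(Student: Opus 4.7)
The plan is to proceed in two steps: (a) identify the sum $p^b(t,x,y) := \sum_{n \ge 0} k_n(t,x,y)$ with the transition density of $X$ on the short interval $(0, T_1]$ where Lemma~\ref{L:3.16} guarantees convergence with a Gaussian-type upper bound; and (b) bootstrap by Chapman--Kolmogorov to extend both the identification and the Gaussian bound to any interval $(0, T_2]$.

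For step (a), the natural tool is the Laplace transform in $t$. Applying Fubini to the recursion \eqref{def-k_n} (justified by the absolute convergence in Lemma~\ref{L:3.16}) together with the substitution $u = t - s$, and interchanging the time integral with $\nabla_z$ via $\nabla_z G^0_\alpha(z,y) = \int_0^\infty e^{-\alpha s}\nabla_z p(s,z,y)\,ds$ (a dominated-convergence step supported by the pointwise bounds in Propositions~\ref{P:3.1}--\ref{P:3.2}), one obtains inductively $\int_0^\infty e^{-\alpha t} k_n(t,x,y)\,dt = G^0_\alpha (b\cdot\nabla G^0_\alpha)^n(x,y)$ as integral kernels. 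Summing over $n$ and invoking Theorem~\ref{T:2.1}(ii) yields $\int_0^\infty e^{-\alpha t} p^b(t,x,y)\,dt = G^b_\alpha(x,y)$ for $\alpha$ sufficiently large. Since the transition density of $X$ has the same Laplace transform by definition of the resolvent, uniqueness of Laplace inversion forces $p^b$ to coincide with the transition density of $X$ on $(0, T_1]$.

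For step (b), once $p^b$ is identified as the transition density of $X$ on $(0, T_1]$, Chapman--Kolmogorov gives $p^b(t,x,y) = \int_E p^b(t/2,x,z)\,p^b(t/2,z,y)\,m_p(dz)$ for $t \in (T_1, 2T_1]$, and by iteration the identification propagates to any $(0, T_2]$. To propagate the Gaussian-type upper bound of Lemma~\ref{L:3.16} simultaneously, I would use a ``Gaussian semigroup'' convolution estimate of the form
$$\int_E p^0_{0,\alpha_1}(s, x, z)\, p^0_{0,\alpha_2}(t-s, z, y)\, m_p(dz) \lesssim p^0_{0,\alpha_3}(t, x, y)$$
for suitable $\alpha_3 < \alpha_1 \wedge \alpha_2$ independent of $s, t \in (0, T_2]$. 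This estimate is proved by the same case-by-case analysis as in Propositions~\ref{121051}--\ref{153} -- splitting by whether $z \in \IR_+$ or $z \in D_\eps \cup \{a^*\}$, and by whether the radii $|x|_\rho, |y|_\rho, |z|_\rho$ exceed the thresholds $1$ and $M$ -- but stripped of the $b(z)\,|\nabla_z p|$ factors, so each case reduces to a triangle-inequality manipulation on the exponential weights of the sort exploited in \eqref{triangle-v1}, \eqref{triangle-ineq-1} and \eqref{triangle-inequality-2}.

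The main obstacle is the combinatorial bookkeeping for the Gaussian semigroup estimate: the canonical form $p^0_{0,\alpha}$ has five distinct regimes and the integration variable $z$ ranges over two geometric pieces of $E$, so a faithful verification requires enumerating a sizeable number of subcases. Fortunately the analogous Chapman--Kolmogorov identity already holds for $X^0$ itself, whose density is pinned between two copies of $p^0_{0,\alpha}$ by Theorem~\ref{smalltime}; this provides a direct template and sharply limits the new computations required.
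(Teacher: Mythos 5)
There is a genuine gap in your step (a), and it concerns the order of operations. The Laplace transforms $\int_0^\infty e^{-\alpha t}k_n(t,x,y)\,dt$ that your identification argument rests on are integrals over \emph{all} of $(0,\infty)$, but Lemma \ref{L:3.16} only gives you the existence of $k_n(t,x,y)$ and the convergence of $\sum_n k_n(t,x,y)$ with a Gaussian bound on the short interval $(0,T_1]$. As written, nothing in your proposal defines or controls $k_n(t,x,y)$ (or the sum) for $t>T_1$, so the transforms are not known to exist, and ``uniqueness of Laplace inversion on $(0,T_1]$'' is not available: you cannot perform a Laplace-transform identification ``on a short interval only.'' For the same reason, your step (b) only propagates the transition density of $X$ by Chapman--Kolmogorov; it never shows that the \emph{series} $\sum_n k_n(t,x,y)$ converges for $t>T_1$, which is precisely what the lemma asserts for every $T_2$. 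The missing ingredient is the paper's first step: one shows (by an argument as in \cite{JS}) that the series itself satisfies the Chapman--Kolmogorov identity for $0<s,\,t-s\le T_1$, and then uses that identity to extend the convergence of $\sum_n k_n$ to $(0,2T_1]$, and by iteration to all $t>0$ with a bound of the form $c_3e^{c_4t}\,p^0_{0,c_5}(t,x,y)$. Only after this global-in-$t$ control is in place can one take Laplace transforms for $\alpha>2c_4$, identify $u^{(n)}_\alpha$ with the kernel of $G^0_\alpha(b\cdot\nabla G^0_\alpha)^n$ (as in \cite{BJ}), and invoke Theorem \ref{T:2.1} to conclude that $p^b$ is the density of $X$. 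Your plan inverts this order and therefore cannot get off the ground; since the contraction constants $C_i(t)$ in Propositions \ref{121051}--\ref{153} need not be less than $1$ for large $t$, there is no direct way to sum the series beyond $T_1$ without the Chapman--Kolmogorov bootstrap.

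A smaller remark on your step (b): the ``Gaussian semigroup'' convolution estimate you propose to verify case by case is unnecessary. The paper sidesteps it by sandwiching: $p^0_{0,\alpha}(t/2,\cdot,\cdot)\lesssim p(ct,\cdot,\cdot)$ by Theorem \ref{smalltime} (after a harmless time rescaling), and then the exact Chapman--Kolmogorov identity for the symmetric process $X^0$ gives $\int_E p(ct,x,z)p(ct,z,y)\,m_p(dz)=p(2ct,x,y)\lesssim p^0_{0,c'}(t,x,y)$. You gesture at this shortcut in your last sentence; if you adopt it outright, the only genuinely new work in the whole proof is the Chapman--Kolmogorov identity for the perturbation series, which is exactly the step your outline omits.
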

\begin{proof}
Using an argument   similar to \cite[Lemmas 15 \& 16]{JS}, we can show that $\sum_{n=0}^\infty k_n(t,x,y)$ satisfies Chapman-Kolmogorov equation as follows:
\begin{equation}\label{C-K-equation3.94}
\int_{z\in E} \sum_{i\ge 0}k_i(s, x, z)\sum_{j\ge 0}k_j(t-s, z, y)dz =\sum_{n\ge 0}k_n(t,x,y), \quad \text{for }0<s, t-s\le T_1,
\end{equation}
where $T_1$ is the same as in Lemma \ref{L:3.16}.  The details are omitted here. 
Furthermore, taking $s=T_1$ and $T_1<t\le 2T_1$ in \eqref{C-K-equation3.94} implies that  for some $c_1, c_2>0$, $\sum_{n\ge 0}k_n(t,x,y)$ converges  on $0<t\le 2T_1$ with  the following upper bound:
\begin{equation*}
\left|\sum_{n\ge 0}k_n(t,x,y)\right| \le c_1^2 p (c_2t,x,y), \quad \text{on }T_1<t\le 2T_1.
\end{equation*}
Repeating the same procedure, we know there exist $c_3, c_4, c_5>0$ such that
\begin{equation}\label{UP-k_n}
\left|\sum_{n\ge 0}k_n(t,x,y)\right|\le c_3e^{c_4t}p(c_2t,x,y)\le c_3 e^{c_4t}p^0_{0, c_5}(t,x,y), \quad \text{for all }t>0.  
\end{equation}
With the above upper bound  for $\sum_n k_n(t,x,y)$, we now claim 
\begin{equation*}
p^b(t,x,y):=\sum_{n=0}^\infty k_n(t,x,y), \quad \text{for }t>0, x,y\in E,
\end{equation*}
 is indeed the transition density of $X$. We claim this  by showing that for sufficiently large $\alpha>0$,  the Laplace transform of  $p^b(t,x,y)$ is well-defined and indeed equals  the kernel of $G_\alpha^b$.

 For $\alpha>2c_4$ in \eqref{UP-k_n}, $n=0,1,...,$ and $x,y\in E$,  we define $u^{(n)}_\alpha (x,y)$ as the Laplace transform of $k_n(t,x,y)$ as follows:
\begin{equation*}
u_\alpha^{(n)}(x,y):=\int_0^\infty e^{-\alpha t}k_n(t,x,y) dt.
\end{equation*}
We can  therefore define for $\alpha>2c_4$ that 
\begin{equation}\label{resolventkernel}
u^b_\alpha (x,y):=\sum_{n=0}^\infty u_\alpha^{(n)}(x,y)=\sum_{n=0}^\infty \int_0^\infty e^{-\alpha t}k_n(t,x,y)dt =\int_0^\infty e^{-\alpha t}\left(\sum_{n=0}^\infty k_n(t,x,y)\right)dt,
\end{equation}
where the infinite sum converges in view of \eqref{UP-k_n}.
%It can be verified by Fubini's theorem that  
%\begin{align*}
%u_\alpha^{(n)}(x,y)=\int_{E}u_\alpha^{(n-1)}(x,z)b(z)\cdot \nabla_z u^{(0)}_\alpha(z,y)dz, \quad \text{for all } n\ge 1,\, x,y\in E. 
%\end{align*}
Using the same argument as that for \cite[Lemma 18]{BJ}, one can show for  $\alpha>2c_4$ that $G_\alpha^0(b\cdot\nabla  G_\alpha^0)^n$ has kernel $u_\alpha^{(n)}(x,y)$.
%\begin{align*}
%G_\alpha^0(b\cdot\nabla  G_\alpha^0)^nf(x)=\int_{E} u_\alpha^{(n)}(x,z)f(z)dz, \quad \text{for all } f\in L^2(E)\cap L^\infty(E),\, n\ge 1.
%\end{align*}
In view of Theorem \ref{T:2.1},  it holds for any $\alpha>\max\{2c_4, \alpha_0\}$ ($\alpha_0$ is chosen in Theorem \ref{T:2.1}) that 
\begin{align*}
G_\alpha^b f(x)&\stackrel{\text{Theorem }\ref{T:2.1}}{=}\sum_{n=0}^\infty G_\alpha^0(b\cdot \nabla G_\alpha^0)^nf=\sum_{n=0}^\infty\int_{E} u_\alpha^{(n)}(x,y)f(y)dy
\\
&=\int_E\int_0^\infty e^{-\alpha t}p^b(t,x,y)f(y)dt dy, \quad \forall f\in L^2(E)\cap L^\infty (E), \forall x\in E.
\end{align*}
% that there exists some $\alpha_0>0$ such that for all $\alpha>\alpha_0$, \\ $\displaystyle{G^b_\alpha f=\sum_{n=0}^\infty G_\alpha^0(b\cdot \nabla G_\alpha^0)^n}f$, $\forall \,f\in L^2(E)$, where the sum converges in the sense of $\|\cdot \|_{1,2}$ on $E$.  Thus by dominate convergence theorem, it holds for any $\alpha>\max\{2c_5, \alpha_0\}$ that 
%\begin{equation*}
%G_\alpha^b f(x)=\sum_{n=0}^\infty\int_{E} u_\alpha^{(n)}(x,y)f(y)dy=\int_E\int_0^\infty e^{-\alpha t}p^b(t,x,y)f(y)dt dy, \quad \forall f\in L^2(E)\cap L^\infty (E)
%\end{equation*}
Thus the same relationship as above holds for all  $f\in L^2(E)$.   This implies that when $\alpha>\max\{2c_4, \alpha_0\}$, for all $x,y\in E$, $G_\alpha^b$ has   kernel  $\int_0^\infty e^{-\alpha t}p^b(t,x, y )dt$.
  %Since the transition semigroup $(P^_t)_{t>0} $ of a strong Markov process is uniquely determined by $G_\alpha$ for sufficiently large values of $\alpha$ in the following way:
%$$  
 %P_t u=\lim_{\alpha\rightarrow \infty}e^{-t\alpha}\sum_{n=0}^\infty \frac{(t\alpha)^n}{n!}\left(\alpha G_\alpha\right)^n u, \quad u\in \mathcal{D}(\EE^0),
%$$ 
%where the limit is in the strong sense. 
 i.e.,  $\{p^b(t,x,y)\}_{(t,x,y)\in \IR_+\times E\times  E}$ is indeed the transition density of $X$.

Finally, to complete the proof, we need to justify the last statement of the lemma. For this, we only need to show $p^b(t,x,y)$ has the same type of upper bounds as those in Lemma \ref{L:3.16} on $(0, 2T_1]$.  Since $p^b(t,x,y)$ satisfies Chapman-Kolmogorov equation \eqref{C-K-equation3.94},  there exist some constants $c_6, c_7, c_8>0$ such that 
\begin{align*}
p^b(t,x,y)&=\int_{ E} p^b(t/2, x,z)p^b(t/2, z,y)m_p(dz) \lesssim \int_E p^0_{0, c_6}(t/2, x,z)p^0_{0, c_6}(t/2, z,y)m_p(dz)
\\
&\lesssim \int_E p(c_7t, x,z)p(c_7t, z,y)m_p(dz) =p(2c_7t, x, y) \lesssim p^0_{0, c_8}(t,x,y), \qquad t\in (0, 2T_1].
\end{align*}
Therefore  for any $T_2>0$  in the    statement of the   lemma, the same type of upper bound holds for $p^b(t,x,y)$ on $t\in (0, T_2]$  by repeating the same process finitely  many  times. 
\end{proof}
%In view of Lemma \ref{L:3.15} and \eqref{def-density-X},  the following theorem is an immediate consequence, which provides the short-time heat kernel upper bound for BMVD with drift   $X$.
%\begin{prop}\label{P:3.17}
%Let $T>0$ be fixed.  Let $b$ be a  measurable function  in $L^{p_1, p_2}(E)$ with  $p_1\in (1, \infty]$ and $p_2\in (2,\infty]$.   The transition density $p^b(t,x,y)$ of  BMVD with drift $X$ exists on $\IR_+\times E\times E$. Furthermore,    there exist positive constants $C_i: 32\le i \le 38$, such that  the following estimates hold on  $t\in (0, T]$:
%\begin{description}
%\item{\rm (i)} For $x \in \IR_+$ or $y\in \IR_+$,
%\begin{equation*}
%p^b(t,x,y)\le\frac{C_{32}}{\sqrt{t}}e^{-C_{33}\rho(x,y)^2/t}.
%\end{equation*}
%\item{\rm (ii)} For $x,y\in D_\eps\cup \{a^*\}$ with $\max\{|x|_\rho,|y|_\rho\}<1$,
%\begin{align*}\
%p^b(t,x,y)\le\frac{C_{34}}{\sqrt{t}}e^{-C_{35}\rho(x,y)^2/t}+\frac{C_{34}}{t}\left(1\wedge
%\frac{|x|_\rho}{\sqrt{t}}\right)\left(1\wedge
%\frac{|y|_\rho}{\sqrt{t}}\right)e^{-C_{36}|x-y|^2/t};
%\end{align*}
%and when $\max\{|x|_\rho,|y|_\rho\}\ge 1$,
%\begin{equation*}
%p^b(t,x,y) \le \frac{C_{37}}{t}e^{-C_{38}\rho(x,y)^2/t}.
%\end{equation*}
%\end{description}
%\end{prop}

We are now in the position to  present the proof to Theorem \ref{T:smalltime}.

\medskip 
 
\noindent{\bf Proof of Theorem \ref{T:smalltime}.}
For all  the three cases, the upper bounds follow  immediately from the conjunction of Lemmas \ref{L:3.16} and \ref{L:3.17}. Therefore, we only need to establish the lower bounds. Furthermore, the low bounds only need to be established  for some small $T=t_1>0$, because from there, the lower bounds  for an arbitrary $T>0$ follow immediately from Chapman-Kolmogorov equation.  Note that the constant $c_3$   in the proof to Lemma \ref{L:3.16}  is monotonically decreasing in $t$, and similar property holds for all other cases in that proof. 
Thus in view of the inequality
\begin{equation*}
p^b(t,x,y)=\sum_{n=0}^\infty k_n(t,x,y)\ge p(t,x,y)-\sum_{n=1}^\infty |k_n(t,x,y)|.
\end{equation*}
 and  Lemma \ref{L:3.17},  we may pick $t_1\in (0, 1]$ sufficiently small such that for some $c_1>0$, it holds  for all $t\in (0, t_1]$ and all $x, y:\,\rho(x,y)<2\sqrt{t}$:
\begin{description}
\item{(i)}  $p^b(t,x,y)\ge c_1/\sqrt{t}$ if $x\in \IR_+$ or $y\in \IR_+$.
\item{(ii)} $p^b(t,x,y)\ge c_1/t$ if $x,y\in D_\eps$ with  $\min\{|x|_\rho, |y|\}\ge 2$.
\item{(iii)} $p^b(t,x,y)\ge \frac{c_1}{\sqrt{t}}+\frac{c_1}{t} (1\wedge \frac{|x|_\rho}{\sqrt{t}})(1\wedge \frac{|y|_\rho}{\sqrt{t}})$ otherwise.
\end{description}
With the above ``near-diagonal'' estimates, in order to get the ``off-diagonal'' estimates, we again divide the computation into several parts depending on the positions of $x$ and $y$. In the following computation, it is always assumed $|x-y|\ge \rho(x,y)\ge 2\sqrt{t}$, as otherwise it is covered by ``near-diagonal" estimate  (i)-(iii) above. Throughout this proof, we fix a sufficiently large constant $\lambda>64$ which will be used in the rest of the proof. In the  following,  the values of the positive constants $c_i>0,  i=1,2,\dots$ are  not important and thus may change from line to line. 
\\
{\it Case 1.} $x,y\in \IR_+$. Let $m$ be the smallest integer such that $m\ge \lambda |x-y|^2/t\ge 4\lambda$.  Let $y_0=x$, $y_m=y$. It follows that
\begin{align}
p^b(t,x,y)&\ge \int_{\IR_+\cap \{|y_k-y_{k-1}|\le \frac{1}{4}\sqrt{\frac{t}{m}}, k=1,\cdots , m\}} p^b(t/m, x,y_1)\cdots p^b(t/m, y_{m-1}, y)dy_1\cdots dy_{m-1}\nonumber
\\
&\ge \left(\frac{c_1}{\sqrt{t/m}}\right)^m \left(\frac{1}{2}\sqrt{\frac{t}{m}}\right)^{m-1}\gtrsim \frac{\sqrt{m}c_1^m}{\sqrt{t}}\gtrsim \frac{1}{\sqrt{t}}e^{-c_2|x-y|^2/t}, \label{327}
\end{align}
where the last inequality is due to the fact that $m\asymp |x-y|^2/t$.
\\
{\it Case 2.} $x\in \IR_+$, $y\in D_\eps \cup \{a^*\}$ with $|y|_\rho<\sqrt{t}$. Note that in this case, $|x| \le |x|+|y|_\rho= \rho(x,y)$. On the set $\{y_1: \;y_1\in \IR_+, |y_1|<\sqrt{t}/4\}$, it holds $p^b(3t/4, y_1, y)\ge c_1/\sqrt{t}$ by the ``near-diagonal" estimate. Also, the result for Case 1 yields, for $|y_1|<\sqrt{t}/4$,
\begin{equation}\label{e:3.95}
p^b(t/4, x, y_1)\gtrsim \frac{1}{\sqrt{t/4}}e^{-4c_2|x-y_1|^2/t}\gtrsim \frac{1}{\sqrt{t}}e^{-8c_2(|x|^2+|y_1|^2)/t}\gtrsim \frac{1}{\sqrt{t}}e^{-8c_2|x|^2/t}.
\end{equation}
Hence,
\begin{align}\label{e:3.110}
p^b(t,x,y)&\ge \int_{y_1\in \IR_+, |y_1|<\sqrt{t}/4}p^b(t/4, x,y_1)p^b(3t/4, y_1, y) dy_1 \nonumber
\\
&\gtrsim 
\frac{1}{\sqrt{t}}e^{-8c_2|x|^2/t}\cdot \frac{c_1}{\sqrt{t}} \cdot \frac{\sqrt{t}}{4} \gtrsim \frac{1}{\sqrt{t}}e^{-8c_2|x|^2/t}\ge \frac{1}{\sqrt{t}}e^{-8c_2(|x|+|y|_\rho)^2/t}\gtrsim \frac{1}{\sqrt{t}}e^{-c_3\rho(x,y)^2/t}.
\end{align}
{\it Case 3.} $x\in \IR_+, y\in D_\eps$ with  $|y|_\rho>\sqrt{t}$. Again, for the fixed $\lambda$, we let $m$ be the smallest integer such that $m\ge \lambda |y|_\rho^2/t>\lambda$. Let $y_0=x$ and $y_{m+1}=y$.  Set a region $A_1\subset (D_\eps)^m$ (the product space of $D_\eps$) as follows:
$A_1:=\{(y_1, \cdots, y_m)\in (D_\eps)^m: |y_1|_\rho\le \sqrt{t}/4,\,  |y_k-y_{k-1}|<\frac{1}{4}\sqrt{\frac{t}{m}},\, k=2, \cdots, m+1; \, |y_k|_\rho>\sqrt{t}/8 \text{ for all }k=1,2,\dots, m\}$.
 We note that $m_p(A_1)\asymp (t/m)^{m}$ (here we abuse the notation a little bit  by letting $m_p$ be the product measure on $D_\eps^m$, which is equivalent to Lebesgue measure on $\IR^m$).    Denote by $W^{b_2}$ a  two-dimensional Brownian motion with drift $b_2$, and denote by $W^{b_2}_{D_\eps}$ the killed process of $W^{b_2}$ upon exiting $D_\eps$. Let $p^{W^{b_2}}_{D_\eps}$  be the transition density of $W^{b_2}_{D_\eps}$.  Since $X$ and $W^{b_2}$ have the same distribution before exiting $D_\eps$,  in view of  the Dirichlet heat kernel estimate for $p^{W^{b_2}}_{D_\eps}$ (see \cite[Theorem 1.1]{CKP}),   it holds
\begin{equation}\label{e:3.97}
p^b(t/2m, y_{k-1}, y_{k})\ge p^b_{D_\eps}(t/2m, y_{k-1}, y_{k}) = p^{W^{b_2}}_{D_\eps}(t/2m, y_{k-1}, y_{k})\gtrsim \frac{m}{t}, \, k=2,\cdots, m+1.
\end{equation}
Applying \eqref{e:3.110} to $p^b(t/2, x,y_1)$ and noting $m_p(A_1)\asymp (t/m)^m$, we have
\begin{align}
p^b(t,x,y)&\ge \int_{A_1} p^b(t/2, x,y_1)p^b(t/2m, y_1, y_2)\cdots p^b(t/2m, y_{m}, y)m_p(dy_1)\cdots m_p( dy_{m})\nonumber
\\
&\gtrsim \frac{1}{\sqrt{t}}e^{-8c_2|x|^2/t}\left(\frac{c_4m}{t}\right)^{m}\cdot \left(\frac{t}{m}\right)^{m}  \nonumber
\\
&\gtrsim \frac{c_4^m}{\sqrt{t}}e^{-8c_2|x|^2/t}\gtrsim \frac{1}{\sqrt{t}}e^{-c_5(|x|^2+|y|_\rho^2)/t}\gtrsim \frac{1}{\sqrt{t}}e^{-c_6\rho(x,y)^2/t}, \label{211}
\end{align}
where the  second last ``$\gtrsim$'' in \eqref{211} is due to the fact that $m\asymp |y|_\rho^2/t$.
\\
{\it Case 4. }$x\in D_\eps \cup \{a^*\}$, $y\in \IR_+$. It is easy to see that this case can be handled following the exact same argument as Cases 2-3 by switching the roles of $x$ and $y$. 
\\
{\it Case 5. } $x,y\in D_\eps $ with $\max\{|x|_\rho, |y|_\rho\}>2$ and $\min\{|x|_\rho, |y|_\rho\}>\sqrt{t}/2$. As in Case 3, we denote by $W^{b_2}$ a  two-dimensional Brownian motion with drift $b_2$, and denote by $W^{b_2}_{D_\eps}$ the killed process of $W^{b_2}$ upon exiting $D_\eps$. Let $p^{W^{b_2}}_{D_\eps}$  be the transition density of $W^{b_2}_{D_\eps}$.  Since $X$ and $W^{b_2}$ have the same distribution before exiting $D_\eps$, on account of  the Dirichlet heat kernel estimate in \cite[Theorem 1.1]{CKP},   it holds
\begin{align}
p^b(t,x,y)&\ge p^b_{D_\eps}(t,x,y)=p^{W^{b_2}}_{D_\eps}(t,x,y)\gtrsim \frac{1}{t}\left(1\wedge \frac{|x|_\rho}{\sqrt{t}}\right)\left(1\wedge \frac{|y|_\rho}{\sqrt{t}}\right)e^{-c_7|x-y|^2/t} \nonumber
\\
&\ge  \frac{1}{4t} e^{-c_7|x-y|^2/t}\ge \frac{1}{4t} e^{- c_8\rho(x,y)^2/t}, \label{e:3.99}
\end{align}
where the  second last inequality follows from the fact that  $\min\{|x|_\rho, |y|_\rho\}>\sqrt{t}/2$, and the last inequality is due to the fact that $\rho(x,y)\ge |x-y|/2$ when $\max\{|x|_\rho, |y|_\rho\}> 2$.
\\
{\it Case 6. } $x,y\in D_\eps \cup\{a^*\}$ with $\max\{|x|_\rho, |y|_\rho\}>2$ and $\min\{|x|_\rho, |y|_\rho\}\le \sqrt{t}/2$. Similar to Case 5, for this case it also holds $\rho(x,y)\ge |x-y|/2$. Without loss of generality, we assume $|x|_\rho<\sqrt{t}/2$ and $|y|_\rho>2$. Let $A_2:=\{y_1\in D_\eps: \sqrt{t}/2<|y_1|_\rho<\sqrt{t}\}$. Thus
\begin{equation}\label{e:3.100}
m_p(A_2)=\pi\left((\sqrt{t}+\eps)^2-(\frac{\sqrt{t}}{2}+\eps)^2\right)=\pi \left(\frac{3t}{4}+\eps\sqrt{t}\right) \asymp \sqrt{t}.
\end{equation}
Also for $y_1\in A_2$, since $t\le t_1\le 1$, we have
\begin{equation}\label{e:3.101}
|y-y_1|\le |y|+|y_1|\le |y|+\sqrt{t}+\eps \le 2|y|.
\end{equation}
Applying \eqref{e:3.99} to $p^b(t/4, y_1, y)$ and in view of the ``near-diagonal" estimate, we have 
\begin{align*}
p^b(t,x,y)&\ge \int_{A_2}p^b(3t/4, x,y_1)p^b(t/4, y_1, y)m_p(dy_1) \nonumber
\\
(\rho(x,y_1)\le 3\sqrt{t}/2 )&\gtrsim  \int_{A_2}  \frac{1}{\sqrt{t}}\cdot \frac{1}{t}e^{-c_7|y-y_1|^2/t} m_p(dy_1) \nonumber
\\
(\eqref{e:3.100}, \eqref{e:3.101} )&\gtrsim \frac{1}{t}e^{-4c_7|y|^2/t}\gtrsim \frac{1}{t}e^{-c_9\rho(x,y)^2/t},
\end{align*}
where the last ``$\gtrsim$" follows from the fact that $|x|_\rho<\sqrt{t}/2<1/2$ and $|y|_\rho>2$, so $|y|\asymp \rho(x,y)$ by elementary geometry.
\\
{\it Case 7. } $x,y\in D_\eps \cup \{a^*\}$ with $\max\{|x|_\rho, |y|_\rho\}<2$.   It again follows from Dirichlet heat kernel estimate in \cite[Theorem 1.1]{CKP} that
\begin{align}
p^b(t,x,y)&=p^b_{D_\eps}(t,x,y)+\bar{p}^b_{D_\eps}(t,x,y)\nonumber
\\
&\gtrsim \frac{1}{t}\left(1\wedge \frac{|x|_\rho}{\sqrt{t}}\right)\left(1\wedge \frac{|y|_\rho}{\sqrt{t}}\right)e^{-c_7|x-y|^2/t}+\bar{p}^b_{D_\eps}(t,x,y). \label{e:3.117}
\end{align}
Therefore, it suffices to show the following lower bound  for some $c>0$:
\begin{equation}\label{329}
\bar{p}^b_{D_\eps}(t,x,y)\gtrsim  \frac{1}{\sqrt{t}}e^{-c(|x|^2_\rho+|y|^2_\rho)/t}.
\end{equation}
Indeed, using the results for Cases 2-4, we have
\begin{align*}
\bar{p}^b_{D_\eps}(t,x,y)&\ge \int_{\{y_1\in \IR_+, |y_1|<\sqrt{t}/16\}} p^b(t/2,x, y_1)p^b(t/2, y_1, y)dy_1 \nonumber
\\
&\gtrsim\int_{\{y_1\in \IR_+, |y_1|<\sqrt{t}/16\}} \frac{1}{\sqrt{t}} e^{-c_{10}(|x|_\rho^2+|y_1|^2)/t}\cdot \frac{1}{\sqrt{t}}e^{-c_{10}(|y_1|^2+|y|^2_\rho)/t} dy_1\nonumber
\\
&\gtrsim\int_{\{y_1\in \IR_+, |y_1|<\sqrt{t}/16\}} \frac{1}{\sqrt{t}} e^{-c_{10}|x|_\rho^2/t}\cdot \frac{1}{\sqrt{t}}e^{-c_{10}|y|^2_\rho/t} dy_1\nonumber
\\
&\gtrsim \frac{1}{\sqrt{t}} e^{-c_{10}|x|_\rho^2/t}\cdot \frac{1}{\sqrt{t}}e^{-c_{11}|y|^2_\rho/t} \cdot \sqrt{t} \gtrsim \frac{1}{\sqrt{t}}e^{-c_{10}(|x|^2_\rho+|y|^2_\rho)/t}.
\label{328}
\end{align*}
This establishes \eqref{329},  which on account of   \eqref{e:3.117} proves the desired lower bound for the current case. The proof to the theorem is therefore complete. \qed

\section{Green Function Estimate for Drifted BMVD}\label{S:4}

\indent In this section, we establish two-sided bounds for the Green function of drifted  BMVD 
$X$ killed upon exiting a bounded connected $C^{1,1}$ open set $D\subset E$. 
Recall that the Green function $G_D(x,y)$ is defined as follows:
\begin{equation*}
G^b_D(x,y)=\int_0^\infty p^b_D(t,x,y)dt,
\end{equation*}
where $p^b_D(t, x, y)$ is the transition density function of the part process $X^{D}$  killed upon exiting $D$.
 We assume $a^*\in D$ throughout this section, as otherwise, due to the connectedness of $D$, either $D\subset \IR_+$ or $D\subset D_\eps$. Therefore $G^b_D(x,y)$ is just the  Green function of a bounded $C^{1,1}$ domain  for Brownian motion 
 with drift in one-dimensional or two-dimensional  spaces, whose two-sided estimates are known, see \cite{CKP}.  
It is not  hard to see from 
$$ p^b_D(t, x, y)=p^b(t, x, y)-\IE_x [ p^b(t-\tau_D, X_{\tau_D}, y); \tau_D <t],
$$
that $p^b_D(t, x, y)$ is jointly continuous in $(t, x, y)$.

 %Recall that for any bounded open set $D\subset E$,   $\delta_D(\cdot):=\rho(\cdot, \partial 
 %D)$  denotes the $\rho$-distance to
 %the boundary $\partial D$. 
%Also recall that in Section \ref{S:1},  we have set ${U_1}:=D\cap (\IR_+\setminus \{a^*\})$ and ${U_2}:=D\cap {D_\eps}$.
 %Note that  $a^*\in \partial {U_1} \cap \partial {U_2}$ and $U_1=(0, l)$ for some $l>0$. 
 
 \medskip

%The following theorem gives the two-sided Green function estimates for $X$ in bounded $C^{1,1}$ domains.

%\begin{thm} \label{T:7.1} 
%Let $b$ be a measurable function on $E$ in  $L^{p_1, p_2}(E)$ with $p_1\in (1, \infty]$ and $p_2\in (2,\infty]$. Let $G^b_{D}(x,y)$ be the Green function of $X$ killed upon exiting $D$, where $D$ is a connected bounded $C^{1,1}$ domain of $E$ containing 
%$a^*$.   We have for $x\not= y$ in $D$, 
%\begin{equation*}
%G^b_{D}(x,y)\asymp\left\{
  %\begin{array}{ll}
  %  \delta_D(x)\wedge \delta_D(y),
%&
%\hbox{$x\in U_1 \cup \{a^*\}$, $y\in U_1 \cup \{a^*\}$;}
%\\ \\
 % \delta_D(x)  \delta_D(y)+\ln\left(1+\frac{\delta_{U_2}(x)\delta_{U_2}(y)}{|x-y|^2}\right),
%& \hbox{$x\in U_2$, $y\in U_2$;}
%\\ \\
%  \delta_D(x) \delta_D(y),
%& \hbox{$x\in U_1\cup \{a^*\}$, $y\in U_2$ or vice versa.}
 % \end{array}
%\right.
%\end{equation*}
%\end{thm}

\medskip 
 
\noindent{\bf Proof of Theorem \ref{T:1.10}.}
The proof to this theorem is almost identical to the proof to \cite[Theorem 1.5]{CL}, except for some minor changes. The details are omitted here. \qed

\section*{Acknowledgment}
I am very grateful to Professor  Zhen-Qing Chen for his constant    support, including the great amount of  helpful   advice and  informative discussions. I also  thank the referee for the  very careful corrections and  detailed comments, which has significantly improved the presentation of this paper.

\vskip 0.3truein

\noindent {\bf Shuwen Lou}

\smallskip \noindent
Department of Statistical Sciences

\noindent
University of Toronto

\noindent
Toronto, Ontario, M5S 3G3, Canada

\noindent
E-mail:  \texttt{shuwen.lou@utoronto.ca}

\end{document}